\newcommand{\specialcell}[1]{\ifmeasuring@#1\else\omit$\displaystyle#1$\ignorespaces\fi}
\newtheorem{theorem}{Theorem}[section]
\newtheorem{lemma}[theorem]{Lemma}
\newtheorem{prop}[theorem]{Proposition}
\theoremstyle{definition}
\newtheorem{assumption}[theorem]{Assumption}
\newtheorem{rem}[theorem]{Remark}
\newtheorem{example}[theorem]{Example}
\newcommand{\R}{\mathbb R}
\def\ss{\mathbb S}
\def\P{\mathbb P}
\def\d{\mathrm d}
\newcommand{\A}{\mathscr A}
\newcommand{\B}{\mathfrak B}
\newcommand{\E}{\mathbb E}
\newcommand{\LL}{\mathcal L}
\newcommand{\N}{\mathbb N}
\newcommand{\wdt}{\widetilde}
\newcommand{\wdh}{\widehat}
\newcommand{\La}{\Lambda}
\newcommand{\vLa}{\varLambda}
\newcommand{\e}{\varepsilon}
\newcommand{\sgn}{\mathrm{sgn}}
\newcommand{\tr}{\mathrm{tr}}
\newcommand{\abs}[1]{\left\vert #1\right\vert}
\makeatletter \@addtoreset{equation}{section}
\begin{document} 
\title{On Feller and Strong Feller Properties and Irreducibility of Regime-Switching Jump Diffusion Processes with Countable Regimes\thanks{This research was supported in part by the Simons Foundation (grant award number 523736) and a DIG award from the University of Wisconsin-Milwaukee. The  first author was also supported   by   the Development and Promotion of Science and Technology Talents project (DPST).}}
 \author{Khwanchai Kunwai 
   and Chao Zhu\\Department of Mathematical Sciences,   University of Wisconsin-Milwaukee,   Milwaukee, WI 53201,   USA,   {\tt kkunwai@uwm.edu}, {\tt zhu@uwm.edu}}

\maketitle

\begin{abstract}
This work focuses on a class of regime-switching jump diffusion processes with a countably infinite state space for the discrete component.   Such processes can be used to model  complex hybrid systems in which both structural changes,  small fluctuations as well as big spikes coexist and are intertwined. The paper provides  weak sufficient conditions for Feller and strong Feller properties and  irreducibility for such processes.  The  conditions are  presented in terms of the coefficients of the associated stochastic differential equations. 

\medskip	
\noindent{\bf Keywords:}  Regime-witching jump diffusion, Feller property, strong Feller property, irreducibility.
	
\medskip	
\noindent{\bf Mathematics Subject Classification:} 60J27, 60J60, 60J75, 60G51

\end{abstract}


\section{Introduction}\label{sect-intro}

 Motivated by the increasing need of modeling complex systems, in which both
structural changes and small fluctuations as well as big spikes coexist and are intertwined,
this paper continues the study on regime-switching jump diffusion processes with countable regimes. 
Our focus is on Feller and strong Feller properties and irreducibility for such processes.  We provide weak sufficient conditions for Feller and strong Feller properties and  irreducibility.   

Roughly speaking,  a regime-switching jump diffusion process can be considered as a two component process $(X(t),\vLa(t))$, an analog  (or continuous state) component $X(t)$ and a switching (or discrete event)   component $\vLa(t)$. The analog component models the state of interest while the switching component can be used to describe the structural changes of the state or random environment or random factors that are not represented by the usual jump diffusion formulation. For instance, a regime-switching Black-Scholes model is considered  in \citet*{Zhang}, in which the continuous component $X(t)$ models the price evolution of a risky asset and the switching component $\vLa(t)$ delineates the overall economy state. Regime-switching jump diffusion is also used in mathematical biology such as the recent paper \citet*{TNDT-19}, in which a stochastic SIRS model subject to  both white   and color noises is analyzed. We refer to  \citet*{MaoY,Shao15-ergodicity,ShaoX-14,Shao-15, YZ-10} and the references therein for more work on regime-switching jump diffusions and their applications. 

In the theory of Markov processes and their applications, dealing with a Markov process $\xi(t)$ with $\xi(0)= x$, for a suitable  function $f$, often one must consider the function $P_{t}f(x): =\E_{x} [f(\xi(t))]$. Following \citet*{Dynkin-I},  the process $\xi(t)$ is said to be {\em Feller} if   $P_{t} f$ is continuous  for any $t\ge 0$ and   $\lim_{t\downarrow 0} P_{t} f(x) =f(x)$ for any bounded and continuous function $f$ and it is said to be {\em strong Feller} if  $P_{t} f$   is continuous for any $t > 0$ and any bounded and measurable function $f$.
This is a natural condition in physical or social modeling: 
a slight perturbation of the initial data should
result in a small perturbation in the subsequent movement. In addition, Feller and strong Feller properties are intrinsically related to the existence and uniqueness of an invariant measure of the underlying process; see, for example, \citet*{MeynT-92,MeynT-93II,MeynT-93III}.  

While Feller and strong Feller properties for regime-switching (jump) diffusion processes have been investigated in the literature, this paper makes substantial improvements over  
the literature. It presents weak local non-Lipschitz conditions for Feller and strong Feller properties.  A standing assumption in the literature (such as \citet*{XiZ-17,Shao-15, YZ-10,NguyenYZ-17}) is that the coefficients of the associated stochastic differential equations are (locally) Lipschitz. While it is a convenient assumption, it is rather restrictive in many applications. For example,   the diffusion coefficients in the Feller branching diffusion  and the Cox-Ingersoll-Ross model  are only H\"older continuous.  For another example,   many control and optimization problems often require the handling of systems where the (local) Lipschitz condition is violated.
Motivated by these considerations, this paper further improves the results in the recent paper \citet*{XiYZ-19} by presenting weak non-Lipschitz conditions for Feller and strong Feller properties. The sufficient conditions are spelled out in Theorems \ref{thm-Feller} and \ref{thm-str-Feller}. While certain technical aspects of the analyses are similar in both papers, the assumptions on the coefficients of the  associated stochastic differential equations in this paper are   substantially weakened; see Remarks \ref{rem-Feller} and \ref{rem-str-Feller} for details.  It is also worth mentioning that the sufficient condition for strong Feller property in Theorem \ref{thm-str-Feller}   is inspired by \citet*{PriolaW-06}, which deals with gradient estimate for diffusion semigroups. The extension from diffusions to  regime-switching  jump diffusions with  countable regimes is   nontrivial as the interactions between the analog and switching components add much subtlety and difficulty to the analyses. 
 
The paper next considers irreducibility of regime-switching jump diffusions.  Irreducibility is a topological property of the underlying stochastic process. Roughly speaking, irreducibility says   that every point   in the state space is reachable from any other point in the state space; see Section \ref{sect-formulation} for the precise definition. Irreducibility plays an important role  in establishing the uniqueness   of an invariant measure for   the underlying Markov process; see, for example, \citet*{Hairer-16,Cerrai-01}. Unfortunately such a property for regime-switching jump diffusions has not been systematically investigated in the literature yet. In this paper, we derive irreducibility for regime-switching jump diffusions (Theorem \ref{thm-irreducibilty}) by using an important identity concerning the transition probability of such processes.   An intermediate step, which is interesting in its own right, is to show that the sub-systems  consists of jump diffusions are irreducible under weaker conditions than those in the recent papers such as \citet*{Qiao-14} and \citet*{XiZ-19}. As an application, we present in Proposition \ref{prop-existence-uniqueness-invariant measure} a set of sufficient conditions under which a unique invariant measure for regime-switching jump diffusions exists. 

The rest of the paper is arranged as follows. We give the precise formulation of regime-switching jump diffusion processes in Section \ref{sect-formulation}. The main results of the paper  are summarized in Section \ref{sect-results}.
  Feller and strong Feller properties for regime-switching jump diffusions are established in Sections \ref{sect-Feller} and \ref{sect-str-Feller}, respectively. Section \ref{sect-irr} derives irreducibility for   regime-switching jump diffusions. Two examples are studied in Section \ref{sect-exms} for demonstration.   Appendix \ref{sect-appendix} contains several technical proofs.

\subsection{Formulation}\label{sect-formulation}
Let $(U,\mathfrak{U})$ be a measurable space, $\nu$ a $\sigma$-finite measure on $U$, and $\mathbb{S} =\{1,2,\dots\}$. Assume further that $d\geq1$ is an integer, $b:\mathbb{R}^{d}\times\mathbb{S}\to  \mathbb{R}^{d}$, $\sigma:\mathbb{R}^{d}\times\mathbb{S}\to  \mathbb{R}^{d\times d}$, and $c:\mathbb{R}^{d}\times\mathbb{S}\times U\to \mathbb{R}^{d}$ are Borel measurable functions. Suppose $(X,\varLambda)$ is a right continuous, strong Markov process with left-hand limits on $\mathbb{R}^{d}\times\mathbb{S}$ such that the first component $X$ satisfies the following stochastic differential equation (SDE),
\begin{equation}\label{eq:X}
dX(t) = b(X(t),\vLa(t))dt + \sigma(X(t),\varLambda(t))dW(t) + \int_{U}c(X(t^{-}),\varLambda(t^{-}),u)\tilde{N}(dt,du),
\end{equation}	
where $W$ is a standard $d$-dimensional Brownian motion,   $N$ is a Poisson random measure on $[0,\infty)\times U$ with intensity $dt\nu(du)$, and $\tilde{N}$ is the associated compensated Poisson random measure. Here the second component $\varLambda$ is supposed to be a continuous-time stochastic process taking values in the set $\mathbb{S}$ and satisfies
\begin{equation}\label{eq-switching-component}
\mathbb{P}\{\varLambda(t+\Delta)=l|\varLambda(t)=k, X(t)=x\} = 
\begin{cases} 
q_{kl}(x)\Delta +o(\Delta) & \text{if } k\neq l \\
1 + q_{kl}(x)\Delta +o(\Delta)     & \text{if } k=l,
\end{cases}
\end{equation}
uniformly in $\mathbb{R}^d$, provided that $\Delta\downarrow 0$.

To obtain the structure of the process $\varLambda$, let us consider the family of disjoint intervals $\{\Delta_{kl}(x) : k, l \in \mathbb{S},x\in \R^{d}\}$ defined on the positive half of the real line as follows:
\begin{eqnarray*}
\Delta_{12}(x) &=& [0,q_{12}(x)),\\
\Delta_{13}(x) &=& [q_{12}(x), q_{12}(x) + q_{13}(x)),\\
&\vdots& \\
\Delta_{21}(x) &=& [q_{1}(x), q_{1}(x) + q_{21}(x)),\\
\Delta_{23}(x) &=& [q_{1}(x) + q_{21}(x), q_{1}(x) + q_{21}(x) + q_{23}(x)),\\
&\vdots& \\
\Delta_{31}(x) &=& [q_{1}(x) + q_{2}(x), q_{1}(x) + q_{2}(x) + q_{31}(x)),\\
&\vdots& 
\end{eqnarray*}
where $q_{k}(x) :=  \sum_{l\in \mathbb{S}\backslash\{k\}}q_{kl}(x)$ and we set $\Delta_{kl}(x) := \emptyset$ if $q_{kl}(x) = 0$ for $k\neq l$. Note that $\{\Delta_{kl}(x) : k, l \in \mathbb{S},x\in \R^{d}\}$ are disjoint intervals and that the length of the interval $\Delta_{kl}(x)$ is equal to $q_{kl}(x)$. Define a function $h:\mathbb{R}^d\times\mathbb{S}\times\mathbb{R}_{+}\to \mathbb{R}$ by
\begin{eqnarray}
h(x,k,r) = \sum_{l\in \mathbb{S}\setminus\{ k\}}1_{\Delta_{kl}(x)}(r).
\end{eqnarray}
In other words, we set 
\begin{align*}
 h(x,k,r) =  
\begin{cases} 
l - k & \text{if } r \in \Delta_{kl}(x) \\
0     & \text{otherwise}
\end{cases}
\end{align*}
for each $x\in \mathbb{R}^d$ and $k \in \mathbb{S}$. As a result, the process $\varLambda$ can be  described  as a solution to the following stochastic differential equation
\begin{eqnarray}\label{eq:La-SDE}
\varLambda(t) = \varLambda(0) + \int_{0}^{t}\int_{\mathbb{R}_{+}}h(X(s^{-}), \varLambda(s^{-}),r)N_{1}(ds,dr),
\end{eqnarray}
where $N_{1}$ is a Poisson random measure on $[0,\infty)\times[0,\infty)$ with characteristic measure $\mathfrak{m}(dz)$, the Lebesgue measure.

We make the following standing assumption throughout the paper:

\begin{assumption}\label{Assum1} 
For any $(x,k) \in \mathbb{R}^d\times\mathbb{S}$, the system of stochastic differential equations \eqref{eq:X} and \eqref{eq:La-SDE} has a non-explosive weak solution $(X^{(x,k)},\varLambda^{(x,k)})$ with initial condition $(x,k)$ and the solution is unique in the sense of probability law. 
\end{assumption}

 Consequently we can consider  the semigroup 
 \begin{equation}\label{eq:swjd-semigroup}
P_{t} f(x,k): = \E_{x,k}[f(X(t), \vLa(t))]= \E[f(X^{(x,k)}(t),\varLambda^{(x,k)}(t))], \quad f\in \B_{b}(\R^{d}\times \ss). 
\end{equation} The main focus of this paper is to investigate the continuity properties of the semigroup $P_{t}$. We say that  the semigroup $P_{t}$ or the process $(X, \vLa)$ is {\em Feller continuous} if $P_{t} f \in C_{b}(\R^{d}\times \ss)$ for 
all $t\ge 0$ and $\lim_{t\downarrow 0} P_{t} f(x,k) = f(x,k)$ for all $f\in C_{b}(\R^{d}\times \ss)$ and $(x,k) \in \R^{d}\times \ss$. Furthermore, we say that  the semigroup $P_{t}$ or the process $(X, \vLa)$ is {\em strong Feller continuous} if $P_{t} f \in C_{b}(\R^{d}\times \ss)$ for every $f \in \B_{b}(\R^{d}\times \ss)$ and $t > 0$.  

 Denote the transition probability of the process $(X,\varLambda)$ by 
\begin{eqnarray*}
P(t,(x,k),B\times\{l\}) := P_{t} 1_{B\times \{l\}}(x,k) =\mathbb{P}\{(X(t),\varLambda(t)) \in B\times\{l\} | (X(0),\varLambda(0)) = (x,k)\}, 
\end{eqnarray*} for $ B\in \B(\R^{d})$ and $ l\in \ss$.
The semigroup $P_{t}$ of \eqref{eq:swjd-semigroup} is said to be  {\em irreducible} if for any $t > 0$ and $(x,k) \in \mathbb{R}^d \times \mathbb{S}$, we have
$$P(t,(x,k),B\times\{l\}) > 0$$
for all $l \in \mathbb{S}$ and all nonempty open set $B \in \B(\mathbb{R}^d)$.

For convenience, we state the infinitesimal generator of the regime-switching jump diffusion $(X,\varLambda)$  as follows:
\begin{eqnarray}\label{eq:A-operator}
\mathscr A f(x,k) := \mathcal{L}_{k}f(x,k) + Q(x)f(x,k),
\end{eqnarray}
  for $f(\cdot,k)\in C^{2}_{c}(\R^{d})$, where 
\begin{equation}
\label{eq-Lk-generator}
\begin{aligned}
\mathcal{L}_{k}f(x,k) &:= \frac{1}{2}\tr\left(a(x,k)D^{2}f(x,k)\right) + \langle b(x,k),D f(x,k)\rangle  \\
& \quad \ + \int_{U}\left(f(x+c(x,k,u), k) - f(x,k) -\langle D f(x,k), c(x,k,u)\rangle\right)\nu(du),
\end{aligned}
\end{equation}
and 
\begin{equation}
\label{eq-Qx-generator}
\begin{aligned}
Q(x)f(x,k)  := & \sum_{l\in\mathbb{S}}q_{kl}(x)\left[f(x,l) - f(x,k)\right] 
 = \int_{[0,\infty)}\left[f(x, k+h(x,k,z)) - f(x,k)\right]\mathfrak{m}(dz).
\end{aligned}
\end{equation} In \eqref{eq-Lk-generator} and throughout the paper, $D f(x,k)$ and $D^{2} f(x,k)$ denote respectively the gradient and Hessian matrix of the function $f$ with respect to the $x$ variable, and $\langle \cdot, \cdot\rangle$ denotes the inner product.  The Hilbert–Schmidt norm of a vector or a matrix $a$ is denoted by  
 $|a|: =\sqrt{ \tr(aa^{T})}$, in which $a^{T}$ is the transpose of $a$.

\subsection{Assumptions and Statements of  Results}\label{sect-results}
We collect the assumptions and the main results in this subsection. 
\subsubsection{Feller Property}
\begin{assumption}\label{assumption-non-lip}
\begin{itemize}
  \item[(i)] 
  If $d=1$, then there exist a positive number $\delta_{0}$ and a nondecreasing and concave function $\rho: [0,\infty)\to  [0,\infty)$ satisfying
\begin{equation}\label{rhoProperties1}
\int_{0^{+}}\frac{dr}{\rho(r)} = \infty,
\end{equation}
such that for all $k\in \mathbb{S}, R>0$ and $x, z \in \mathbb{R}$ with $|x|\vee |z| \leq R$ and $|x-z|\leq \delta_{0}$,
\begin{align}\label{A2.2.2}
& {\sgn}(x-z)(b(x,k) - b(z,k))  \leq \kappa_{R}\rho(|x-z|),
\\
\label{sigma-holder-1/2}
& |\sigma(x,k)-\sigma(z,k)|^2 +\int_{U}|c(x,k,u)-c(z,k,u)|^2\nu(du)   \leq \kappa_{R}|x-z|,
\end{align}
where $\kappa_{R}$ is a positive constant and $\sgn(a)=1_{\{ a> 0\}} -1_{\{ a\leq 0\}}$. In addition, for each $k \in \mathbb{S}$, either
\begin{equation}
 \text{the function }x\mapsto x+c(x,k,u) \text{ is nondecreasing for all } u \in U
\end{equation}
or there exists some $\beta > 0$ such that
\begin{equation}
|x-z +\theta(c(x,k,u)-c(z,k,u))| \geq \beta|x-z|, \forall(x,z,u,\theta) \in \mathbb{R}\times\mathbb{R}\times U\times [0,1].
\end{equation}

  \item[(ii)]  If $d\geq 2$, then there exist a positive number $\delta_{0}$ and a nondecreasing and concave function $\rho: [0,\infty)\to  [0,\infty)$ satisfying
\begin{equation}\label{rhoProperties2}
0 < \rho(r) \leq (1+r)^2\rho(r/(1+r)) \text{ for }  r>0 \text{ and } \int_{0^{+}}\frac{dr}{\rho(r)} = \infty
\end{equation}
so that for all $k\in \mathbb{S}, R>0$ and $x, z \in \mathbb{R}^d$ with $|x|\vee |z| \leq R$ and $|x-z|\leq \delta_{0}$,
\begin{equation}\label{eq-b-sigma-non-lip}\begin{aligned}
2\langle x-z,  b(x,k)-b(z,k)\rangle &  + |\sigma(x,k)-\sigma(z,k)|^2 \\
& + \int_{U}|c(x,k,u)-c(z,k,u)|^2\nu(du)\leq \kappa_{R}\rho(|x-z|^2),
\end{aligned}\end{equation}
where $\kappa_{R}$ is a positive constant. 
\end{itemize}
\end{assumption}

 \begin{assumption}\label{assumption-Q-cont} 
   For each $k \in \ss$, there exists a  concave function  $\gamma_{k}: \R_{+} \mapsto \R_{+}  $ with $\gamma(0) = 0$ such that for all $x, y \in \mathbb{R}^d$ with $|x| \vee |y| \le R$, we have 
\begin{equation}\label{qklH}
\sum_{l\in \mathbb{S}\backslash\{k\}}|q_{kl}(x)-q_{kl}(y)| \leq \kappa_{R} \gamma_{k}(|x-y|).
\end{equation}
for some positive constant $\kappa_{R}$ (which,  without loss of generality, can be assumed to be the same positive constant as in that  \eqref{A2.2.2} and \eqref{sigma-holder-1/2}).
 \end{assumption}
 
 \begin{theorem}\label{thm-Feller}
	Under Assumptions  \ref{assumption-non-lip} and \ref{assumption-Q-cont}, the process $(X,\varLambda)$  has the Feller property.
\end{theorem}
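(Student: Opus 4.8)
The plan is to verify the two requirements of the Feller property separately. Since $\ss$ carries the discrete topology, a bounded $f$ on $\R^d\times\ss$ is continuous iff each section $f(\cdot,l)$ is, and $P_tf$ is continuous iff $x\mapsto P_tf(x,k)=\E[f(X^{(x,k)}(t),\vLa^{(x,k)}(t))]$ is continuous for every fixed $k$; boundedness is trivial as $|P_tf|\le\|f\|_\infty$. The property $\lim_{t\downarrow0}P_tf(x,k)=f(x,k)$ is immediate from the right-continuity of $(X,\vLa)$ (so $X^{(x,k)}(t)\to x$ and $\vLa^{(x,k)}(t)=k$ for all small $t$, a.s.) together with continuity of $f(\cdot,k)$ and dominated convergence. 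Hence the whole content is the continuity of $x\mapsto P_tf(x,k)$ for fixed $t>0$, $k\in\ss$, $f\in C_b(\R^d\times\ss)$.

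First I would set up a coupling: fixing $(x,k)$ and letting $z\to x$, realize both weak solutions $(X^{(x,k)},\vLa^{(x,k)})$ and $(X^{(z,k)},\vLa^{(z,k)})$ of \eqref{eq:X}--\eqref{eq:La-SDE} on one probability space, driven by the same Brownian motion and the same Poisson random measures $N$, $N_1$ (permissible by the weak uniqueness in Assumption~\ref{Assum1}). Introduce $\beta_R:=\inf\{s\ge0:|X^{(x,k)}(s)|\vee|X^{(z,k)}(s)|\ge R\}$ and the coupling time $\tau:=\inf\{s\ge0:\vLa^{(x,k)}(s)\neq\vLa^{(z,k)}(s)\}$. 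For $M\in\N$, on the event $\{\beta_R\le t\}\cup\{\tau\le t\}\cup\{\vLa^{(x,k)}(t)\vee\vLa^{(z,k)}(t)>M\}$ we bound $|f(X^{(x,k)}(t),\vLa^{(x,k)}(t))-f(X^{(z,k)}(t),\vLa^{(z,k)}(t))|$ by $2\|f\|_\infty$; on the complement, $\vLa^{(x,k)}(t)=\vLa^{(z,k)}(t)=:L\le M$ and both positions lie in $B_R$, so that quantity is at most $\sup_{l\le M}\omega_l^R(|X^{(x,k)}(t)-X^{(z,k)}(t)|)$, with $\omega_l^R$ the modulus of continuity of $f(\cdot,l)$ on $B_R$. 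One picks $R$ so that $\P(\beta_R\le t)$ is small uniformly for $z$ near $x$ (a standard localization/non-explosion estimate, which I would put in the appendix), and then $M$: since the two switching paths agree up to $\tau$, $\{\vLa^{(z,k)}(t)>M\}\subseteq\{\vLa^{(x,k)}(t)>M\}\cup\{\tau\le t\}$ and $\P(\vLa^{(x,k)}(t)>M)\to0$ as $M\to\infty$ by the non-explosivity in Assumption~\ref{Assum1}, with no dependence on $z$. It then remains to show, as $z\to x$,
\[
\text{(a)}\quad \E\big[|X^{(x,k)}(t\wedge\tau\wedge\beta_R)-X^{(z,k)}(t\wedge\tau\wedge\beta_R)|\big]\to0, \qquad \text{(b)}\quad \P(\tau\le t\wedge\beta_R)\to0.
\]

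For (a) I would run a Yamada--Watanabe argument: on $[0,\tau\wedge\beta_R]$ the two diffusions solve \eqref{eq:X} with \emph{identical} coefficients $b(\cdot,\vLa(s)),\sigma(\cdot,\vLa(s)),c(\cdot,\vLa(s),u)$ (their switching paths coincide there) and stay in $B_R$. When $d=1$, apply It\^o's formula to $\phi_n(X^{(x,k)}(s)-X^{(z,k)}(s))$ for the standard approximants $\phi_n\uparrow|\cdot|$ with $0\le\phi_n'\le1$, $0\le\phi_n''(r)\le 2/(n\rho(|r|))$, estimating the drift term by \eqref{A2.2.2}, the Brownian term by \eqref{sigma-holder-1/2}, and the jump term via the monotonicity-or-nondegeneracy alternative in Assumption~\ref{assumption-non-lip}(i); then let $n\to\infty$ and use $\int_{0^+}dr/\rho(r)=\infty$ in a Bihari-type inequality. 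Since the difference at time $0$ is $|x-z|$, this gives (a). When $d\ge2$, run the same scheme on $\phi_n(|X^{(x,k)}(s)-X^{(z,k)}(s)|^2)$, using \eqref{eq-b-sigma-non-lip} for the combined drift--Brownian--jump contribution and the auxiliary inequality for $\rho$ in \eqref{rhoProperties2} to absorb the extra term from differentiating the square. For (b), note the two switching paths can separate only at a jump time of $N_1$ whose height $r$ lies in the set $D_m(s^-)$ on which $h(X^{(x,k)}(s^-),m,\cdot)$ and $h(X^{(z,k)}(s^-),m,\cdot)$ disagree, $m=\vLa^{(x,k)}(s^-)=\vLa^{(z,k)}(s^-)$; from the consecutive-interval construction, the Lebesgue measure of $D_m(s^-)$ is dominated by a constant times $\sum_{l\neq m}|q_{ml}(X^{(x,k)}(s^-))-q_{ml}(X^{(z,k)}(s^-))|$ plus the shift term $\sum_{j<m}\sum_{l\neq j}|q_{jl}(X^{(x,k)}(s^-))-q_{jl}(X^{(z,k)}(s^-))|$, which on $\{s<\beta_R,\ m\le M\}$ is at most $\psi_{R,M}(|X^{(x,k)}(s^-)-X^{(z,k)}(s^-)|)$ for a nondecreasing concave modulus $\psi_{R,M}$ with $\psi_{R,M}(0)=0$ built from $\kappa_R$ and $\gamma_1,\dots,\gamma_M$ of \eqref{qklH}. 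Then by the compensation formula for $N_1$, optional stopping, Jensen's inequality (concavity of $\psi_{R,M}$) and (a),
\[
\P\big(\tau\le t\wedge\beta_R,\ \vLa\le M \text{ on } [0,t]\big)\ \le\ t\,\psi_{R,M}\Big(\sup_{s\le t}\E\big[|X^{(x,k)}(s\wedge\tau\wedge\beta_R)-X^{(z,k)}(s\wedge\tau\wedge\beta_R)|\big]\Big)\ \xrightarrow[z\to x]{}\ 0,
\]
and combining with the $M$-bound above gives (b). Assembling everything---first $R$, then $M$, then $z\to x$---yields $\limsup_{z\to x}|P_tf(x,k)-P_tf(z,k)|=0$.

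The main obstacle I anticipate is step (b) and its entanglement with (a): the switching-rate perturbation must be absorbed into the closeness of the two continuous components in a regime where that closeness is controlled only by a concave, non-Lipschitz modulus, and the stacked-interval construction of $\vLa$ forces one to accommodate the state-dependent shift term, which is exactly why the truncation $\vLa\le M$ is indispensable and must be made uniform for $z$ near $x$ (the quantitative use of non-explosivity in Assumption~\ref{Assum1}). On the analytic side the delicate points are the jump contributions in the one-dimensional Yamada--Watanabe estimate, which is where the monotonicity/nondegeneracy alternatives in Assumption~\ref{assumption-non-lip}(i) enter, and the $d\ge2$ estimate, where the technical property of $\rho$ in \eqref{rhoProperties2} is precisely what closes the Bihari argument for $|X^{(x,k)}-X^{(z,k)}|^2$.
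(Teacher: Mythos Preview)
Your overall plan---couple the two processes, localize, prove the continuous components stay close via a Yamada--Watanabe/Bihari argument, and show the switching components do not decouple---is the same strategy as the paper's. There are, however, two substantive differences and one technical slip worth flagging.

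First, the paper couples the switching components through the \emph{basic coupling} for continuous-time chains, encoded in the operator $\wdt\varOmega_{\mathrm s}$ of \eqref{eq-Q(x)-coupling}: applying the coupled generator to $\Xi(x,i,z,j)=1_{\{i\ne j\}}$ gives, when $i=j$, the clean bound $\sum_{l\ne i}|q_{il}(x)-q_{il}(z)|\le\kappa_R\gamma_i(|x-z|)$ with no reference whatsoever to the interval representation of $\vLa$. Your choice---drive both copies by the same $N_1$ with the stacked intervals $\Delta_{kl}(x)$ of Section~\ref{sect-formulation}---is a different and strictly less efficient coupling; it forces you to track the ``shift term'' from lower rows and then to truncate the state at $M$. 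This can be made to work (indeed, replacing the stacked intervals by a per-row construction that starts at $0$ for each current state removes the shift without changing the law of $\vLa$), but the basic coupling is the more economical route and makes the whole $M$-truncation unnecessary.

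Second, you omit the stopping time $S_{\delta_0}:=\inf\{s:|X^{(x,k)}(s)-X^{(z,k)}(s)|>\delta_0\}$. The estimates in Assumption~\ref{assumption-non-lip} are only stated for $|x-z|\le\delta_0$, and a single jump can push the difference beyond $\delta_0$ even though both processes remain in $B_R$; without stopping there your It\^o computation in~(a) is not justified. The paper works on $[0,t\wedge\tau_R\wedge S_{\delta_0}\wedge\zeta]$, obtains $\E[|\Delta_{t\wedge\tau_R\wedge S_{\delta_0}\wedge\zeta}|]\to0$ (Lemma~\ref{lem-24}), and then shows separately that $\P\{S_{\delta_0}\le t\wedge\tau_R\wedge\zeta\}\to0$; this extra loop is an essential part of the argument.

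Finally, ``realize both weak solutions driven by the same $W,N,N_1$, permissible by the weak uniqueness in Assumption~\ref{Assum1}'' is not correct as stated: uniqueness in law alone does not let you place two solutions with different initial data on a common probability space with common driving noises. The paper sidesteps this by constructing the coupled pair directly as a Markov process with generator $\wdt\A$ of \eqref{eq-A-coupling-operator}, so that each marginal automatically solves the original martingale problem; alternatively, one can first upgrade to pathwise uniqueness via the Yamada--Watanabe conditions in Assumption~\ref{assumption-non-lip} and then use strong solutions.
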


\begin{rem}\label{rem-Feller}
We note that Assumption \ref{assumption-non-lip} is comparable to  the corresponding assumption in \citet*{XiYZ-19}, except that  the non-local term in \eqref{sigma-holder-1/2} and \eqref{eq-b-sigma-non-lip} only requires  the regularity  of $\int_{U}|c(x,k,u)-c(z,k,u)|^2\nu(du)$. In \citet*{XiYZ-19}, the corresponding term is $\int_{U}[|c(x,k,u)-c(z,k,u)|^2 \wedge |x-z|\cdot |c(x,k,u)-c(z,k,u)|]\nu(du)$. 

Assumption \ref{assumption-Q-cont} is weaker than that in \citet*{XiYZ-19}. Indeed,  the paper assumes that $Q(x) = (q_{kl}(x))$ satisfies  \begin{displaymath}
\sum_{l\in \mathbb{S}\backslash\{k\}}|q_{kl}(x)-q_{kl}(y)| \leq \kappa_{R} \,\rho\bigg(\frac{|x-y|}{1+|x-y| }\bigg), \text{\em{ for each }} k\in \ss,
\end{displaymath} for all $x,y\in \R^{d}$ with $|x|\vee |y| \le R$, in which $\kappa_{R} > 0$ and  $\rho$ is an increasing and concave function satisfying \eqref{rhoProperties2}. In contrast, the function $\gamma_{k} $ in Assumption \ref{assumption-Q-cont} may depend on $k$, and is only required to be concave with $\gamma_{k}(0) =0$. In particular, the non-integrability condition $\int_{0^{+}} \frac{dr}{\rho(r)} =\infty$ is dropped.  This relaxation is significant and  renders that the analyses in \citet*{XiYZ-19} are not applicable.
\end{rem}

 \subsubsection{Strong Feller Property}
 
\begin{assumption}\label{Assum3}
   For every $k \in \mathbb{S}$ the following assertions hold:
\begin{itemize}
  \item[(i)] For every $R>0$ there exits a constant $\lambda_{R} > 0$ such that
\begin{eqnarray}\label{eq:elliptic}
\langle \xi, a(x,k)\xi\rangle \geq \lambda_{R}|\xi|^2,  \qquad \xi \in \mathbb{R}^d,
\end{eqnarray} for all $x\in \R^{d}$ with $|x| \le R$, where $a(x,k) := \sigma(x,k)\sigma(x,k)^{T}$. 
 \item[(ii)]There exist a positive constant $\delta_{0}$   and a nonnegative  function  $g\in C (0,\infty) $ satisfying
\begin{eqnarray}\label{g-integrable-01}
\int_{0}^{1}g(r)dr < \infty,
\end{eqnarray}
such that for each $R>0$, there exists a  constant $\kappa_{R}>0$ so that either (a) or (b) below holds:
\begin{itemize}
  \item[(a)] If $d =1$, then  \begin{equation}
\label{eq:1d-str-Fe-coeff-cts}
 2\langle x-z,b(x,k)-b(z,k)\rangle  
  + \int_{U}|c(x,k,u)-c(z,k,u)|^2\nu(du) \leq 2\kappa_{R}|x-z|g(|x-z|),
\end{equation} for all $x, z \in \mathbb{R}$ with $|x|\vee |z| \leq R$ and $|x-z| \leq \delta_{0}$.
  \item[(b)] If $d \ge 2$, then \begin{equation}
\label{eq:str-Fe-coeff-cts}
\begin{aligned}
 |\sigma_{\lambda_{R}}(x,k)& - \sigma_{\lambda_{R}}(z,k)|^2  + 2\langle x-z,b(x,k)-b(z,k)\rangle  \\
&  + \int_{U}|c(x,k,u)-c(z,k,u)|^2\nu(du) \leq 2\kappa_{R}|x-z|g(|x-z|),
\end{aligned}
\end{equation}
for all $x, z \in \mathbb{R}^d$ with $|x|\vee |z| \leq R$ and $|x-z| \leq \delta_{0}$, where  $\sigma_{\lambda_{R}}$ is the unique symmetric nonnegative definite matrix-valued function such that $\sigma_{\lambda_{R}}^2(x,k) = a(x,k) - \lambda_{R}I$. 
\end{itemize}
\end{itemize}
\end{assumption}

\begin{theorem}\label{thm-str-Feller}
Suppose that Assumptions \ref{assumption-Q-cont} and  \ref{Assum3}  hold. Then the process $(X,\varLambda)$ has strong Feller property.
\end{theorem}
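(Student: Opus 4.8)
The plan is to establish strong Feller continuity by a coupling argument for the continuous component combined with a careful treatment of the switching component, following the strategy of \citet*{PriolaW-06} adapted to the regime-switching jump-diffusion setting. Fix $t>0$ and a bounded measurable $f$ on $\R^d\times\ss$. One needs to show $(x,k)\mapsto P_t f(x,k)$ is continuous; since $\ss$ is discrete, continuity in the second variable is automatic once we fix $k$, so the task reduces to showing $x\mapsto P_t f(x,k)$ is continuous on $\R^d$ for each fixed $k$, uniformly enough to patch. The key idea, inspired by \citet*{PriolaW-06}, is to exploit the uniform ellipticity in Assumption \ref{Assum3}(i): locally we may split $a(x,k)=\lambda_R I + \sigma_{\lambda_R}^2(x,k)$, so the diffusion carries a genuine nondegenerate Brownian part of size $\lambda_R$ that one can use to couple two copies of the process started from nearby points $x$ and $z$, while the remaining drift, the $\sigma_{\lambda_R}$-part, and the jump term are controlled by the one-sided / Hilbert-Schmidt estimates \eqref{eq:1d-str-Fe-coeff-cts} or \eqref{eq:str-Fe-coeff-cts} with the Dini-type function $g$ satisfying $\int_0^1 g<\infty$.

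The steps, in order, would be: (1) Reduce to a fixed regime $k$ and, by a localization/stopping argument at the exit time $\tau_R$ of $X$ from the ball of radius $R$ (and using non-explosiveness from Assumption \ref{Assum1}), replace the coefficients by versions that satisfy the global analogues of \eqref{eq:elliptic} and \eqref{eq:str-Fe-coeff-cts} with constants $\lambda_R,\kappa_R$; the contribution of the event $\{\tau_R<t\}$ is made small uniformly for $x$ in a compact set by a standard moment estimate. (2) Construct a coupling $(X^x(s),\varLambda^x(s))$ and $(X^z(s),\varLambda^z(s))$ driven by appropriately correlated noises: use the Brownian part of magnitude $\sqrt{\lambda_R}$ to run a reflection (or Lindvall–Rogers type) coupling on the difference, and drive the $\sigma_{\lambda_R}$-diffusion, the compensated Poisson integral, and the switching Poisson random measure $N_1$ synchronously. (3) Apply It\^o's formula to a suitably chosen concave, increasing function $\varphi$ of $|X^x(s)-X^z(s)|$ (built from $g$, analogous to the Yamada–Watanabe function), so that the drift, $\sigma_{\lambda_R}$ and jump terms yield a term bounded by $\kappa_R\int |X^x-X^z| g(|X^x-X^z|)\,ds$ after using \eqref{eq:1d-str-Fe-coeff-cts}/\eqref{eq:str-Fe-coeff-cts}, while the reflected Brownian part produces a strictly negative contribution forcing the coupling time $T$ (first time $X^x=X^z$) to satisfy $\P(T>t)\to 0$ as $|x-z|\to 0$. (4) Handle the switching component: once $X^x$ and $X^z$ have coupled, the $Q$-matrices $q_{kl}(X^x)=q_{kl}(X^z)$ agree, so with the shared driver $N_1$ the discrete components coalesce too; before coupling, use Assumption \ref{assumption-Q-cont} (concavity of $\gamma_k$, $\gamma_k(0)=0$) to bound the probability that $\varLambda^x$ and $\varLambda^z$ have been separated by a switching event in terms of $\int_0^{t\wedge T} \kappa_R\,\gamma_k(|X^x(s)-X^z(s)|)\,ds$, which is small in expectation. (5) Conclude: $|P_t f(x,k)-P_t f(z,k)| \le 2\|f\|_\infty\bigl(\P(\tau_R<t)\text{-terms}+\P(T>t)+\P(\varLambda\text{'s separated})\bigr)\to 0$ as $z\to x$, giving continuity; boundedness of $P_t f$ is immediate.

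The main obstacle I expect is Step (3)–(4): making the reflection coupling of the Brownian parts compatible with the \emph{jump} terms and the switching mechanism. Unlike the pure-diffusion case in \citet*{PriolaW-06}, here the compensated Poisson integral $\int_U c\,\tilde N$ contributes to the dynamics of the difference $X^x-X^z$, and a reflection coupling does not interact cleanly with jumps — one must ensure the jump contribution to the It\^o expansion of $\varphi(|X^x-X^z|)$ is still dominated by the $\kappa_R |X^x-X^z| g(|X^x-X^z|)$ bound (this is exactly why the hypothesis keeps the \emph{full} $\int_U |c(x,k,u)-c(z,k,u)|^2\nu(du)$ on the left side of \eqref{eq:1d-str-Fe-coeff-cts}/\eqref{eq:str-Fe-coeff-cts}), and that the jumps of $N_1$ driving $\varLambda$ do not destroy the coalescence of the discrete components after $T$. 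A secondary technical point is that $\sigma_{\lambda_R}$ is only the matrix square root of $a-\lambda_R I$ and need not be Lipschitz even when $a$ is regular; it is precisely for this reason that the hypothesis is phrased directly in terms of $|\sigma_{\lambda_R}(x,k)-\sigma_{\lambda_R}(z,k)|^2$ rather than in terms of $\sigma$ itself, and one must make sure the coupling construction only ever uses $\sigma_{\lambda_R}$ through this quantity. Careful bookkeeping of these jump and switching contributions, together with the Yamada–Watanabe-type inequality driven by the Dini function $g$, is the heart of the argument; the countability of $\ss$ enters only mildly, through the summability in \eqref{qklH}.
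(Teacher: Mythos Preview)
Your proposal is correct and follows essentially the same route as the paper: the paper builds the coupling operator $\wdh\A$ with exactly the reflection/synchronous split you describe (via $\hat g(x,i,z,j)=\lambda_R(I-2uu^T)+\sigma_{\lambda_R}(x,i)\sigma_{\lambda_R}(z,j)^T$), applies It\^o's formula to the Priola--Wang type concave function $G(r)=\int_0^r e^{-\int_0^s\frac{\kappa_R}{2\lambda_R}g}\int_s^1 e^{\int_0^v\frac{\kappa_R}{2\lambda_R}g}\,dv\,ds$ to get $\wdh\LL_k G(|x-z|)\le -\beta_R$ (Lemma \ref{lem-LkG-estimate}), deduces $\P\{t<T\}\to 0$ and $\P\{\zeta\le t\}\to 0$ via Assumption \ref{assumption-Q-cont}, and concludes with the bound $|P_t f(\tilde x,k)-P_t f(x,k)|\le 4\|f\|_\infty\P\{t<T\}+2\|f\|_\infty\P\{\zeta\le t\}$ using the implication $T<\zeta\Rightarrow T=\tilde T$. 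The only cosmetic difference is that the paper organizes the switching estimate through the stopping time $\zeta$ (controlling $\P\{\zeta\le t\wedge\tau_R\wedge S_{\bar\delta}\}$ first, rather than integrating up to $t\wedge T$ as you wrote), but this is the same mechanism.
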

\begin{rem}\label{rem-str-Feller}
We remark that Assumption \ref{Assum3} improves significantly over those in the literature such as \citet*{XiZ-17,Shao-15}, which require Lipschitz condition for the coefficients of the associated stochastic differential equations. By contrast, \eqref{eq:1d-str-Fe-coeff-cts} and \eqref{eq:str-Fe-coeff-cts} place very mild conditions on the coefficients. 
It allows us to treat, for example, the case of H\"older continuous coefficients by taking $g(r) = r^{-p}$ for $0\le p < 1$; see Example \ref{ex1}. For the case when $d =1$, only the drift and the jump coefficients are required to satisfy the regularity conditions. 
 \end{rem}

\subsubsection{Irreducibility}
\begin{assumption} \label{weak solution X^(k)}
		For each $k \in \mathbb{S}$ and $x \in \mathbb{R}^d$, the  SDE   \begin{equation}
\label{SDE X^k} \begin{aligned}X^{(k)}(t) = x+ \int_{0}^{t}b(X^{(k)}(s),  k ) d s +  \int_{0}^{t} \sigma(X^{(k)}(s), k)d W(s)   +  \int_{0}^{t}\int_{U} c(X^{(k)}(s-), k, u)\wdt N(d s, d u)
\end{aligned}\end{equation}   has a non-explosive weak solution $X^{(k)}$ with initial condition $x$ and the solution is unique in the sense of probability law. 
\end{assumption}

\begin{assumption}\label{Assump-linear growth}
For any $x\in \mathbb{R}^d$ and $k \in \mathbb{S}$, we have
\begin{eqnarray}\label{< |x|^2+1}
2\langle x, b(x,k)\rangle   \leq \kappa(|x|^2+1), \quad  \quad  |\sigma(x,k)|^2 + \int_{U}|c(x,k,u)|^2\nu(du) \leq \kappa(|x|^2+1),
\end{eqnarray}	and
\begin{eqnarray}\label{eq1:elliptic}
\langle \xi, a(x,k)\xi\rangle \geq \lambda|\xi|^2,  \qquad \xi \in \mathbb{R}^d,
\end{eqnarray}    where  $\lambda$ and $\kappa$ are positive   constants.
\end{assumption}

\begin{assumption}\label{Assump-Q irreducible}
\begin{itemize}
\item[(i)] There exists a positive constant $\kappa_{0} $ such that 	
	\begin{equation}\label{eq:q_kl-bound}
0 \leq q_{kl}(x) \leq \kappa_{0} l 3^{-l}
\end{equation}
for all $x \in \mathbb{R}^d$ and $k \neq l \in \mathbb{S}$.	
\item[(ii)]  For any $k, l \in \mathbb{S}$, there exist $k_0, k_1,...,k_n \in \mathbb{S}$	with $k_i \neq k_{i+1}, k_0=k$, and $k_n = l$ such that the set $\{x\in \mathbb{R}^d : q_{k_ik_{i+1}}(x) > 0\}$ has positive Lebesgue measure for all $i=0, 1,\dots,n-1$.
\end{itemize}
\end{assumption}

\begin{theorem}\label{thm-irreducibilty}
Suppose that  Assumptions \ref{Assum3} (ii), \ref{weak solution X^(k)},   \ref{Assump-linear growth}, and \ref{Assump-Q irreducible} hold.  Then the semigroup $P_{t}$ of \eqref{eq:swjd-semigroup} is irreducible.
\end{theorem}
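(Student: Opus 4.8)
The plan is to establish irreducibility of the full regime-switching jump diffusion $(X,\vLa)$ by decomposing the transition probability according to the trajectory of the switching component $\vLa$, and reducing the positivity of $P(t,(x,k),B\times\{l\})$ to (i) irreducibility of each frozen-regime subsystem $X^{(k)}$ and (ii) the possibility of realizing a prescribed chain of switches $k=k_0\to k_1\to\cdots\to k_n=l$ with positive probability. First I would prove the intermediate claim that, under Assumptions \ref{Assum3}(ii), \ref{weak solution X^(k)}, and \ref{Assump-linear growth}, each subsystem SDE \eqref{SDE X^k} is irreducible: for every $t>0$, every $x\in\R^d$, and every nonempty open $B$, one has $\P\{X^{(k)}(t)\in B\mid X^{(k)}(0)=x\}>0$. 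The argument here follows the Stroock--Varadhan-type support theorem philosophy adapted to jump diffusions: uniform ellipticity \eqref{eq1:elliptic} plus the linear growth \eqref{< |x|^2+1} gives a Gaussian-type lower bound on the diffusive part (via e.g. a Girsanov change of measure removing the drift, or a comparison with the nondegenerate diffusion obtained by discarding the jumps on the event that no jump occurs before time $t$), while the continuity condition \eqref{eq:1d-str-Fe-coeff-cts}/\eqref{eq:str-Fe-coeff-cts} from Assumption \ref{Assum3}(ii) is what allows the coefficients to be merely non-Lipschitz; I would restrict to the event $\{N([0,t]\times U)=0\}$, which has positive probability since $\nu$ restricted to the relevant region is finite on the trajectory up to an explosion-free time, turning the problem into one for a pure diffusion with possibly non-Lipschitz but uniformly elliptic coefficients, for which positivity of the transition density is classical.

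Next I would handle the switching structure. The key identity alluded to in the introduction expresses $P(t,(x,k),B\times\{l\})$ as a sum over all finite paths $(k_0,k_1,\dots,k_n)$ in $\ss$ with $k_0=k$, $k_n=l$, $k_i\neq k_{i+1}$, over the event that $\vLa$ follows exactly this sequence of jumps in $[0,t]$: conditioning on the jump times $0<t_1<\cdots<t_n<t$ of $\vLa$ and the absence of further switches, the continuous component evolves on each interval $[t_{i-1},t_i)$ as the subsystem $X^{(k_i)}$ started from the position reached at $t_{i-1}$. Since $q_{k_ik_{i+1}}(\cdot)>0$ on a set of positive Lebesgue measure by Assumption \ref{Assump-Q irreducible}(ii), and since by the subsystem irreducibility $X^{(k_i)}$ reaches any such set with positive probability, each switch along the chosen chain can be forced to happen with positive probability; the bound \eqref{eq:q_kl-bound} guarantees $q_k(x)=\sum_{l\neq k}q_{kl}(x)\le \kappa_0\sum_l l3^{-l}<\infty$ is uniformly bounded, so the holding times have densities bounded below on compacts and the waiting for ``no further switch in the last interval'' also has positive probability. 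Concatenating: on a positive-probability event $\vLa$ makes exactly the switches $k_0\to\cdots\to k_n=l$ at times in a small box, and conditionally on that event and on the last interval, $X$ behaves as $X^{(l)}$, which lands in $B$ with positive probability by step one (applied with the random, but almost surely finite, starting point, using a measurable-selection/Fubini argument). Multiplying the positive factors and integrating over the jump-time simplex gives $P(t,(x,k),B\times\{l\})>0$.

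I would organize the write-up as: (1) a lemma asserting subsystem irreducibility, proved via restriction to the no-jump event and a support-theorem argument for uniformly elliptic (non-Lipschitz) diffusions; (2) the path-decomposition identity for $P(t,(x,k),\cdot)$, justified by the strong Markov property together with the representation \eqref{eq:La-SDE} of $\vLa$ as driven by the Poisson random measure $N_1$, so that conditionally on $N_1$ the process $X$ solves a classical (time-inhomogeneous-in-regime) SDE; (3) the combinatorial step selecting the chain $k_0,\dots,k_n$ from Assumption \ref{Assump-Q irreducible}(ii) and verifying each link fires with positive probability using Assumption \ref{Assump-Q irreducible}(i) and step (1); (4) assembling the lower bound. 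The main obstacle I anticipate is step (1): proving positivity of the transition probability for the frozen-regime jump diffusion under the weak regularity of Assumption \ref{Assum3}(ii) rather than Lipschitz coefficients — one must be careful that weak uniqueness (Assumption \ref{weak solution X^(k)}) is enough to make the conditioning on $\{N([0,t]\times U)=0\}$ and the subsequent support-theorem argument rigorous, since the classical support theorems are stated for strong solutions or under nondegeneracy assumptions that need to be matched to \eqref{eq1:elliptic} and \eqref{< |x|^2+1}. A secondary technical point is the measurable dependence of the subsystem hitting probabilities on the starting point when we splice intervals together, which I would resolve by a routine Fubini/monotone-class argument; and ensuring the series over regimes and over path lengths is handled so that a single positive term suffices (which it does, since all terms are nonnegative).
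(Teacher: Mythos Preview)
Your overall architecture---reduce irreducibility of $(X,\vLa)$ to (i) irreducibility of each frozen-regime subsystem $X^{(k)}$ and (ii) a path decomposition of $P(t,(x,k),B\times\{l\})$ along switching trajectories---matches the paper exactly, and your treatment of (ii) is essentially right: the paper invokes the series representation \eqref{eq-P(t-xk-series} (justified via the strong Feller property of the subsystems, Lemma~\ref{X(k) Strong Feller}, and \citet*{XiYZ-19}), then uses the uniform bound \eqref{eq:q_kl-bound} to get $q_k(\cdot)\le M<\infty$, whence $\tilde P^{(k)}(s,y,B)\ge e^{-Ms}P^{(k)}(s,y,B)>0$, so one positive term in the series suffices.

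The genuine gap is in step (i). You commit to restricting to the event $\{N([0,t]\times U)=0\}$ and reducing to a pure diffusion. But $\nu(U)$ need not be finite---in Example~\ref{ex1} one has $\nu(du)=|u|^{-2}du$ on $\{0<|u|<1\}$, so $\nu(U)=\infty$---and when $\nu(U)=\infty$ the Poisson random measure has infinitely many atoms on $[0,t]\times U$ almost surely, so $\P\{N([0,t]\times U)=0\}=0$. The no-jump event is a null set in the infinite-activity regime, which is precisely the setting the compensated integral in \eqref{eq:X} is built for; your justification ``$\nu$ restricted to the relevant region is finite'' has no basis in the assumptions.

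What the paper does instead (Lemma~\ref{P^K irredicible}) is a Girsanov argument that \emph{adds} a bounded drift $h^n$ to steer an auxiliary process $Y$ along a straight line $J^n$ from (a truncation of) $X^{(k)}(t_0)$ to the target $a$. One controls $\E[F(|Y(T)-a|^2)]$ via the auxiliary concave function $F$ built from the $g$ of Assumption~\ref{Assum3}(ii)---this is where the non-Lipschitz regularity enters---and shows it can be made small by taking $t_0$ close to $T$ and $n$ large. The density process uses $\sigma^{-1}(Y,k)h^n$, bounded by uniform ellipticity \eqref{eq1:elliptic}, so Novikov applies; by the Girsanov theorem for jump diffusions (\citet*{Situ-05}), under the new measure $Y$ has the law of $X^{(k)}$ while $\tilde N$ remains a compensated Poisson random measure with the \emph{same} intensity, so the jumps are carried along unchanged and no finiteness of $\nu(U)$ is required. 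Equivalence of measures then transfers $\P\{|Y(T)-a|\ge r\}<1$ to $\P\{|X^{(k)}(T)-a|\ge r\}<1$.
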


\begin{rem}\label{rem-about-irreducibility}
As we mentioned in Remark \ref{rem-str-Feller}, Assumption \ref{Assum3} (ii) places very mild regularity condition for the coefficients of the stochastic differential equations given by \eqref{eq:X} and \eqref{eq:La-SDE}; see Remark \ref{rem-assume-3.1} for further elaborations.  Assumption \ref{weak solution X^(k)} requires each subsystem of \eqref{eq:X} to be well-posed in the weak sense. Assumption \ref{Assum3} (i) is strengthened to uniform ellipticity in \eqref{eq1:elliptic}, which is a common assumption in the literature for deriving irreducibility of (jump) diffusions, see, for example, \citet*{P-Zabczyk-1995,Qiao-14} and others. In addition, since we are dealing with a two-component process $(X,\vLa)$, one can expect that the $q$-matrix $Q(x)$ must satisfy some sort of irreducibility condition so that $(X,\vLa)$ is irreducible; Assumption \ref{Assump-Q irreducible} (ii) is therefore in force. Finally, the linear growth condition \eqref{< |x|^2+1} as well as  \eqref{eq:q_kl-bound} are imposed to facilitate our technical analyses. 
\end{rem}

\section{Feller Property}\label{sect-Feller}
 
This section is devoted to establishing the Feller property for regime-switching jump diffusion stated in Theorem \ref{thm-Feller}. We will use the coupling method to prove Theorem \ref{thm-Feller}. To this end, let us first construct a basic coupling operator $\wdt \A$ for $\A$. For $f(x,i, z,j) \in C_{c}^{2} (\R^{d} \times \ss\times \R^{d} \times \ss )$, we define
\begin{equation}
\label{eq-A-coupling-operator} \begin{aligned}
\wdt{\A} & f(x,i,z,j)  : =\! \bigl[ \wdt \varOmega_{\text{d}}   + \wdt \varOmega_{\text{j}}   + \wdt \varOmega_{\text{s}} \bigr] f(x,i,z,j),
\end{aligned}\end{equation} where $ \wdt \varOmega_{\text{d}}$,  $\wdt \varOmega_{\text{j}}$, and    $ \wdt \varOmega_{\text{s}}$ are defined as follows.
For $x,z\in \R^{d}  $ and $i,j\in \ss $, we set $a(x,i)= \sigma(x,i)\sigma(x,i)'$ and
$$\begin{aligned}a(x,i,z,j) & =\begin{pmatrix}
a(x,i) & \sigma (x,i) \sigma (z,j)' \\
\sigma (z,j) \sigma (x,i)' & a(z,j)
\end{pmatrix},\ \
b(x,i,z,j) =\begin{pmatrix}
b(x,i)\\
b(z,j) \end{pmatrix}. \end{aligned}$$  Then we define
\begin{align}\label{eq-Omega-d-defn}  
& \wdt {\varOmega}_{\text{d}}f(x,i,z,j): =\frac
{1}{2}\hbox{tr}\bigl(a(x,i,z,j)D^{2}f(x,i, z,j)\bigr)  +\langle
b(x,i,z,j), D f(x,i, z,j)\rangle,
 \\
 \label{eq-Omega-j-defn}
& \begin{aligned}    \displaystyle\wdt {\varOmega}_{\text{j}} f(x,i,z,j)
& : = \int_{U}
\big[f(x+c(x,i,u),i, z+c(z,j,u), j)-f(x,i,z,j)  \\
  & \quad   -  \langle D_{x } f(x,i,z,j),   c(x,i,u)\rangle  
 - \langle D_{z} f(x,i, z,j),   c(z,j,u)\rangle   \big] \nu(d u),
\end{aligned}
\end{align} where $D f(x,i, z,j)= (D_{x } f(x,i,z,j), D_{z} f(x,i,z,j))'$ is the gradient and $D^{2}f(x,i,z,j)$ is   the Hessian matrix of   $f$ with respect to the  variables $x$ and $z$,  and
\begin{align}
\label{eq-Q(x)-coupling} 
\nonumber \wdt \varOmega_{\text{s}}  f(x,i,z,j) & : =
 \sum_{l\in\ss}[q_{il}(x)-q_{jl}(z)]^+(   f(x,l, z, j)-  f(x,i, z, j))\\
&\quad\ +\sum_{l\in\ss}[q_{jl}(z) -q_{il}(x)]^+( f(x,i, z, l)- f(x,i, z, j) )\\
\nonumber&\quad \  +\sum_{l\in\ss}[ q_{il}(x) \wedge q_{jl}(z) ](  f(x,l, z, l)-f(x,i, z, j)).
 \end{align}  
For any function $f : \R^{d}\times \R^{d}\mapsto \R$, let $\wdt f:\R^{d} \times \ss\times \R^{d} \times \ss \mapsto \R$ be defined by $\wdt f(x,i,z,j): =f(x,z)$.  Now we   denote   for each $k\in \ss$ $$\wdt \LL_{k} f(x,z) =( \wdt {\varOmega}_{\text{d}}^{(k)} + \wdt {\varOmega}_{\text{j}}^{(k)}   ) f(x,z) : = ( \wdt {\varOmega}_{\text{d}} + \wdt {\varOmega}_{\text{j}}   ) \wdt f(x,k,z,k), \quad \forall f \in C^{2}_{c}(\R^{d}\times \R^{d}). $$

  We introduce the following notations.   Let
$(X(\cdot), \varLambda(\cdot),  \tilde X(\cdot), \tilde \vLa(\cdot))$ denote the coupling process
corresponding to the  operator  $\wdt \A$
with initial condition $(x, k,z, k)$,  in which $\delta_{0} > |x-z | > 0$,  and  $\delta_{0}$ is the positive constant in Assumption \ref{assumption-non-lip}.
  For any $R > 0$, let
\begin{equation}
\label{eq:tau_R-defn}
\tau_{R} := \inf\{t\geq 0: |\tilde{X}(t)|\vee |X(t)|\vee|\tilde{\varLambda}(t)|\vee| \varLambda(t)| > R\}. 
\end{equation} 
In view of Assumption \ref{Assum1},
 $\lim_{R\rightarrow \infty}\tau_{R} = \infty$ a.s. Also denote  $\Delta_{t} = \tilde{X}(t) - X(t)$ and \begin{equation}
\label{eq:S-delta0-defn}
S_{\delta_{0}} := \inf\{t\geq 0 :|\Delta_{t}| > \delta_{0}\}  = \inf\{t\geq 0 :|\tilde{X}(t) - X(t)| > \delta_{0}\}.
\end{equation} Note that $\vLa (0) = \tilde \vLa(0) =k$. We denote by
	 \begin{equation}
\label{eq:zeta-defn}
\zeta := \inf\{t\geq0:\varLambda(t) \neq \tilde{\varLambda}(t)\}
\end{equation}
the first time when the switching components $\vLa$ and $\tilde \vLa$ differ.

We need the following lemma whose proof is arranged in Appendix \ref{sect-appendix}:

\begin{lemma}\label{lem-24} 
	Under Assumption  
	\ref{assumption-non-lip},   
	 the following assertion holds: 
	\begin{eqnarray}\label{e1:EDd-=0}
		\lim_{|\tilde{x}-x|\to  0}\mathbb{E}[|\Delta_{t\wedge\tau_{R}\wedge S_{\delta_{0}}\wedge\zeta}|] =  0, \quad \forall t\ge 0.	\end{eqnarray}
\end{lemma}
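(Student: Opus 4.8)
The plan is to estimate the expected growth of $|\Delta_t|$ along the coupling process, stopped before the switching components separate, before $|\Delta_t|$ exceeds $\delta_0$, and before either component exits the ball of radius $R$. On the event $\{t < \zeta\}$ we have $\varLambda(s) = \tilde\varLambda(s)$ for all $s \le t$, so the switching part $\wdt\varOmega_{\text s}$ contributes nothing to the dynamics of $\Delta$ up to time $\zeta$ (the common value of the two discrete components makes all the rate differences $[q_{il}(x) - q_{jl}(z)]^+$ act only through the joint jump which moves both components to the same new index, leaving $\Delta$ unchanged). Hence, up to $\tau_R \wedge S_{\delta_0} \wedge \zeta$, the pair $(X, \tilde X)$ evolves under the diffusion-jump coupling operator $\wdt\LL_k$ with the common index $k$, and the marginal SDE for $X$ (resp.\ $\tilde X$) is exactly \eqref{eq:X} with $\varLambda \equiv k$.

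The first step is to choose a good Lyapunov-type test function. In dimension one I would follow the classical Yamada--Watanabe / Ikeda--Watanabe approach: build a sequence $\phi_n \in C^2(\R)$ approximating $|x|$ that exploits the Osgood condition $\int_{0^+} dr/\rho(r) = \infty$, so that $\phi_n(\Delta_t) \to |\Delta_t|$ while $\phi_n''$ localizes near the origin at a rate controlled by $\rho$; applying It\^o's formula and the assumed one-sided bounds \eqref{A2.2.2} and the H\"older-type bound \eqref{sigma-holder-1/2}, together with the monotonicity/non-degeneracy condition on $x \mapsto x + c(x,k,u)$ that controls the jump contribution, yields $\E[\phi_n(\Delta_{t\wedge\tau_R\wedge S_{\delta_0}\wedge\zeta})] \le \phi_n(|\tilde x - x|) + \kappa_R \int_0^t \E[\psi_n(\Delta_{s\wedge\cdots})]\,ds$ for a function $\psi_n$ converging down to $\rho(|\cdot|)$ or a linear bound; passing $n \to \infty$ and then invoking Gronwall together with the concavity of $\rho$ (Bihari's inequality) forces the right-hand side to $0$ as $|\tilde x - x| \to 0$. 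In dimension $d \ge 2$ the analogous argument uses a test function approximating $|x|$ (or works with $|\Delta_t|^2$ and the condition \eqref{rhoProperties2} that makes $\rho$ compatible with the change of variable $r \mapsto r/(1+r)$), with the bound \eqref{eq-b-sigma-non-lip} supplying the drift-plus-diffusion-plus-jump estimate after applying It\^o to $\phi_n(|\Delta_t|^2)$ or $\phi_n(|\Delta_t|)$.

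The main obstacle is the jump term. Unlike the diffusion case, It\^o's formula for $\phi_n(|\Delta_t|)$ produces a discrete increment $\phi_n(|\Delta_{s^-} + c(\tilde X(s^-),k,u) - c(X(s^-),k,u)|) - \phi_n(|\Delta_{s^-}|)$ integrated against $\nu$, and controlling this by $\kappa_R \rho(|\Delta_{s^-}|)$ (or $\kappa_R|\Delta_{s^-}|$) requires care: a naive Taylor expansion only gives a second-order bound involving $|c(\tilde X,k,u) - c(X,k,u)|^2$, which is exactly what \eqref{sigma-holder-1/2} and \eqref{eq-b-sigma-non-lip} control, but one must ensure the first-order (compensator) terms cancel correctly and that the approximating functions $\phi_n$ are chosen with enough concavity/convexity to absorb the remainder. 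In $d = 1$ the two stated alternatives --- either $x \mapsto x + c(x,k,u)$ nondecreasing, or $|x - z + \theta(c(x,k,u) - c(z,k,u))| \ge \beta|x-z|$ --- are precisely what is needed to keep the sign of $\Delta$ (hence $\sgn(\Delta_{s^-})$) stable across a jump, or to keep $|\Delta|$ bounded below so that the jump-integrand estimate stays in the regime where $\phi_n' \approx \sgn$; handling these two cases separately is the delicate bookkeeping. Once the jump estimate is in place, the remaining steps (Gronwall/Bihari and passing to the limit in $n$) are routine, and since the solution is non-explosive by Assumption \ref{Assum1} the stopping at $\tau_R$ causes no trouble in the limit $|\tilde x - x| \to 0$ for fixed $R$ and $t$.
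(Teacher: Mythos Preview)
Your plan is correct and tracks the paper's proof closely: in $d=1$ the paper uses exactly the Yamada--Watanabe approximation $\psi_n \uparrow |\cdot|$ to derive $\E[|\Delta_{t\wedge\tau_R\wedge S_{\delta_0}\wedge\zeta}|] \le |\Delta_0| + \kappa_R\int_0^t \rho(\E[|\Delta_{s\wedge\cdots}|])\,ds$ and then applies the Bihari/Osgood argument via $\Gamma(r)=\int_1^r ds/\rho(s)$, while in $d\ge 2$ the paper works directly with $f(x,z)=|x-z|^2$ (no approximation is needed, and the extra condition in \eqref{rhoProperties2} is not invoked here) and runs the same Bihari step on $\E[|\Delta|^2]$. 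The jump-term control you describe is precisely the content of the estimate $\wdt\LL_k\psi_n(x-z)\le K\kappa_R/n + \kappa_R\rho(|x-z|)$ in the paper, with the two $d=1$ alternatives handled as you anticipate.
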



\begin{proof}[Proof of Theorem \ref{thm-Feller}] We need  to show that for each $(x,k) \in \mathbb{R}^d \times \mathbb{S}$ and each $f\in C_{b}(\R^{d}\times \ss)$, the limit $(P_{t}f)(\tilde{x},\tilde{k})\to  (P_{t}f)(x,k)$ as $(\tilde{x},\tilde{k}) \to  (x,k)$ holds for all $t\geq 0$. Since $\mathbb{S} = \{ 1, 2,...\}$ has a discrete topology, it is enough to consider only $(\tilde{x},k) \to  (x,k)$. First, observe that
\begin{align}\label{conclusion0}
|(P_{t}f)(\tilde{x},\tilde{k})- (P_{t}f)(x,k)| &= |\mathbb{E}[f(\tilde{X}(t),\tilde{\varLambda}(t))] - \mathbb{E}[f(X(t),\varLambda(t))]| \nonumber\\
&\leq  |\mathbb{E}[f(\tilde{X}(t),\tilde{\varLambda}(t))] - \mathbb{E}[f(\tilde{X}(t),\varLambda(t))]| \nonumber\\
& \quad +  |\mathbb{E}[f(\tilde{X}(t),\varLambda(t))] -\mathbb{E}[f(X(t),\varLambda(t))]| \nonumber\\
&= |\mathbb{E}[(f(\tilde{X}(t),\tilde{\varLambda}(t)) - f(\tilde{X}(t),\varLambda(t)))1_{\{\zeta \leq t\}}]| \nonumber\\
&\quad+ |\mathbb{E}[(f(\tilde{X}(t),\tilde{\varLambda}(t)) - f(\tilde{X}(t),\varLambda(t)))1_{\{\zeta > t\}}]| \nonumber\\
&\quad + |\mathbb{E}[f(\tilde{X}(t),\varLambda(t))] -\mathbb{E}[f(X(t),\varLambda(t))]| \nonumber\\
&\leq 2||f||_{\infty}\mathbb{P}\{\zeta \leq t\}+ |\mathbb{E}[f(\tilde{X}(t),\varLambda(t))]-\mathbb{E}[f(X(t),\varLambda(t))]|. 
\end{align}
We will show that both terms on the right-hand side of \eqref{conclusion0} converge to 0 as $\tilde x \to x$. 

  Consider the function $\Xi(x,k,z,l): = 1_{\{k\neq l\}}$. It follows directly from the definition that $$\wdt   \A \Xi(x,k,z,l) = \wdt \Omega_{\text{s}} \Xi(x,k,z,l) \le 0, \text{ if } k \neq l.$$   When $k =l$, we have  from \eqref{qklH} that
\begin{align*}
   \wdt   \A \Xi(x,k,z,l) & =\wdt \Omega_{\text{s}} \Xi(x,k,z,k) & \\
   &   = \sum_{ i\in \ss} [q_{ki}(x)-q_{ki}(z)]^+( 1_{\{i \neq k \}} - 1_{\{ k\neq k\}})
 +\sum_{i \in\ss}[q_{ki}(z)  -q_{ki}(x)]^+(  1_{\{i \neq k \}} - 1_{\{ k\neq k\}} ) \\
 & \le \sum_{i \in\ss, i \neq k} \abs{q_{ki}(x)-q_{ki}(z)}
 \le  \kappa_{R} \gamma_{k}( \abs{x-y}).
\end{align*} 
Hence \begin{equation}
\label{eq-coupling est 1}
\wdt \A \Xi(x,k,z,l) \le   { \kappa_{R} \gamma_{k}( \abs{x-y})}
\end{equation}
for all $k,l \in\ss$ and $x,z\in \R^{d}$ with  $|x| \vee |z| \le R$.

 Note that $\zeta \leq t \wedge \tau_{R}\wedge S_{\delta_{0}}$ if and only if  $\tilde{\varLambda}(t \wedge \tau_{R}\wedge S_{\delta_{0}}\wedge \zeta) \neq  \varLambda(t \wedge \tau_{R}\wedge S_{\delta_{0}}\wedge \zeta)$. Thus  we can use \eqref{eq-coupling est 1} to  compute
\begin{align*}
	\mathbb{P} & \{\zeta \leq t \wedge \tau_{R}\wedge S_{\delta_{0}} \}\\ 
	& = \E[\Xi(\tilde X(t \wedge \tau_{R}\wedge S_{\delta_{0}}\wedge \zeta), \tilde \vLa(t \wedge \tau_{R}\wedge S_{\delta_{0}}\wedge \zeta), X(t \wedge \tau_{R}\wedge S_{\delta_{0}}\wedge \zeta),\La(t \wedge \tau_{R}\wedge S_{\delta_{0}}\wedge \zeta))]\\
	&=\Xi(\tilde x,k,x,k) + \mathbb{E}\bigg[\int_{0}^{t \wedge \tau_{R}\wedge S_{\delta_{0}}\wedge\zeta}\wdt \A \Xi(\tilde X(s),\tilde\vLa(s), X(s),\La(s)) ds\bigg]\\
	&\leq \kappa_{R}\mathbb{E}\left[\int_{0}^{t\wedge \tau_{R}\wedge S_{\delta_{0}}\wedge\zeta }{ \gamma_{k} (|\tilde{X}(s)-X(s)|)  } ds\right]\\
	&\leq \kappa_{R}\int_{0}^{t} \mathbb{E}[\gamma_{k}(|\tilde{X}(s \wedge \tau_{R}\wedge S_{\delta_{0}}\wedge\zeta)-X(s \wedge \tau_{R}\wedge S_{\delta_{0}}\wedge\zeta)|)  ] ds \\
	&\le   \kappa_{R}\int_{0}^{t}{  \gamma_{k}\big( \mathbb{E}[|\Delta_{s\wedge\tau_{R}\wedge S_{\delta_{0}}\wedge \zeta}|]\big)  } ds,
\end{align*}
where the last inequality follows from the concavity of $\gamma_{k}$.
Since $\gamma_{k}(0) = 0$, then 
\eqref{e1:EDd-=0} and the bounded convergence theorem imply
\begin{equation}\label{p1=0}
\lim_{|\wdt x-x| \to 0}\P\{\zeta\le t \wedge \tau_{R}\wedge S_{\delta_{0}} \}=0.
\end{equation}

   Note also that on the set $\{S_{\delta_{0}} \leq t\wedge \zeta\wedge \tau_{R}\}$ we have $\delta_{0} \leq |\Delta_{S_{\delta_{0}}\wedge t \wedge\zeta\wedge \tau_{R}}|$.
 This implies
 $$\delta_{0}\mathbb{P}\{S_{\delta_{0}} \leq t\wedge\zeta\wedge \tau_{R}\} \leq  \E[ |\Delta_{t\wedge S_{\delta_{0}}\wedge\zeta\wedge \tau_{R}}|1_{\{S_{\delta_{0}} \leq t\wedge\zeta\wedge \tau_{R}\}}] \leq \mathbb{E}[|\Delta_{t\wedge S_{\delta_{0}}\wedge\zeta\wedge \tau_{R}}|] .$$
Therefore, it follows from  \eqref{e1:EDd-=0} that
 \begin{equation}\label{p2=0}
 \lim_{|\wdt x-x| \to 0}\mathbb{P}\{S_{\delta_{0}} \leq t\wedge\zeta\wedge \tau_{R}\} = 0.
 \end{equation}
Fix  an arbitrary  positive number $\epsilon $. We have from \eqref{e1:EDd-=0}  that
 \begin{eqnarray}\label{p3=0}
 \lim_{|\wdt x-x| \to 0}\mathbb{P}\{|\Delta_{t \wedge S_{\delta_{0}} \wedge \tau_{R}\wedge\zeta }| > \epsilon\} \le  \lim_{|\wdt x-x| \to 0}  \frac{\mathbb{E}\big[|\Delta_{t \wedge S_{\delta_{0}} \wedge\tau_{R}\wedge\zeta}|\big]}{\epsilon} = 0.
 \end{eqnarray}
 
  Since $\lim_{R\rightarrow \infty}\tau_{R} = \infty$ a.s., we can choose $R$ sufficiently large so that \begin{equation}
\label{eq-tau-R<t}
\mathbb{P}\{\tau_{R} < t\} < \epsilon.
\end{equation} Then
  \begin{align*}
  \P\{|\Delta_{t} | > \e \}  & =  \P\{|\Delta_{t} | > \e, \tau_{R} <  t \} +   \P\{|\Delta_{t} | > \e, \tau_{R} \ge  t,\zeta \le t \wedge S_{\delta_{0}} \wedge \tau_{R} \} \\ 
  & \quad +  \P\{|\Delta_{t} | > \e, \tau_{R} \ge  t,\zeta >  t \wedge S_{\delta_{0}} \wedge \tau_{R}, S_{\delta_{0}} \le t \wedge \tau_{R} \wedge \zeta \} \\
  & \quad + \P\{|\Delta_{t} | > \e, \tau_{R} \ge  t,\zeta >  t \wedge S_{\delta_{0}} \wedge \tau_{R}, S_{\delta_{0}} > t \wedge \tau_{R} \wedge \zeta \} \\
  & \le \epsilon+ \P \{\zeta \le t \wedge S_{\delta_{0}} \wedge \tau_{R} \} + \P \{ S_{\delta_{0}} \le t \wedge \tau_{R}\wedge \zeta\} +  \P\{|\Delta_{t} | > \e,  t \le  S_{\delta_{0}} \wedge \tau_{R} \wedge \zeta \} \\
  & \le \epsilon + \P \{\zeta \le t \wedge S_{\delta_{0}} \wedge \tau_{R} \} + \P \{ S_{\delta_{0}} \le t \wedge \tau_{R}\wedge \zeta\} +\P\{|\Delta_{t \wedge S_{\delta_{0}} \wedge \tau_{R} \wedge \zeta} | > \e\}.
  \end{align*}   
  From  \eqref{p1=0}--\eqref{p3=0} we have $$\lim_{|\wdt x-x| \to 0}\mathbb{P}\{|\Delta_{t} | > \e \} \leq \epsilon.$$ Since $\epsilon$ is arbitrary, we conclude that $\lim_{|\wdt x-x| \to 0}\mathbb{P}\{|\Delta_{t} | > \e \} = 0$. In other words, $\tilde{X}(t)\to X(t)$ in probability as $\tilde{x}\to x$. With the   metric $d$ on $\R^{d}\times \ss$ defined by $d((x,i),(y,j)): = |x-y| + 1_{\{i\neq j \}}$, we see immediately that  $(\tilde X(t),\Lambda (t)) \to (X(t), \Lambda(t))$ in probability as $\tilde x \to x$. Because the function $f$ is continuous, we also have $f(\tilde X(t),\Lambda (t)) \to f(X(t), \Lambda(t))$ in probability as $\tilde x \to x$. Then   the bounded convergence theorem implies  
\begin{eqnarray}\label{conclsusion1}
|\mathbb{E}[f(\tilde{X}(t),\varLambda(t))] -\mathbb{E}[f(X(t),\varLambda(t))]|\to  0  \text{ as } \tilde{x}\to  x.
\end{eqnarray}
  
  Next, we show that $\lim_{\tilde x \to x}\mathbb{P}\{\zeta \leq t\} = 0$ holds.     Thanks to \eqref{eq-tau-R<t} we can compute
 \begin{align*}
 \mathbb{P}\{\zeta \leq t\} &= \mathbb{P}\{\zeta \leq t, \tau_{R}< t\} + \mathbb{P}\{\zeta \leq t, \tau_{R} \geq t\}\\
 &\le  \mathbb{P}\{\tau_{R}< t\} + \mathbb{P}\{\zeta \leq t, \tau_{R} \geq t, S_{\delta_{0}} \leq t\wedge\zeta\} + \mathbb{P}\{\zeta \leq t, \tau_{R} \geq t, S_{\delta_{0}} > t\wedge\zeta\}\\
  &\leq  \epsilon + \mathbb{P}\{S_{\delta_{0}} \leq t\wedge\zeta\wedge
  \tau_{R}\} + \mathbb{P}\{\zeta \leq t\wedge \tau_{R} \wedge S_{\delta_{0}}\}.
 \end{align*}
 It then  follows from (\ref{p1=0}) and (\ref{p2=0}) that $\lim_{|\wdt x-x| \to 0}\mathbb{P}\{\zeta \leq t\} \leq \epsilon$. Again since $\epsilon$ is arbitrary,  we have
\begin{eqnarray}\label{conclusion2}
\lim_{|\wdt x-x| \to 0}\mathbb{P}\{\zeta \leq t\} = 0.
\end{eqnarray}

 Finally we plug  (\ref{conclsusion1}) and  (\ref{conclusion2}) into (\ref{conclusion0}) to complete  the proof.
\end{proof}

\section{Strong Feller Property}\label{sect-str-Feller} 
 As in Section \ref{sect-Feller}, we will use the coupling method to prove Theorem \ref{thm-str-Feller}. To this end,  we first  define 
\begin{displaymath}
\wdh a(x,i,z,j) := \bordermatrix{&  & \cr
	& a(x,i) & \hat{g}(x,i,z,j) \cr
	& \hat{g}(x,i,z,j)^{T} & a(z,j) \cr}\quad  \text{ and } \quad b(x,i,z,j) := \bordermatrix{& \cr
		 & b(x,i)\cr
		 & b(z,j)\cr}
\end{displaymath}
where
\begin{align*}
\hat{g}(x,i,z,j) &:= \lambda_{R}(I-2u(x,z)u(x,z)^T) + \sigma_{\lambda_{R}}(x,i)\sigma_{\lambda_{R}}(z,j)^{T}, 
\end{align*} and $u(x,z) := \frac{x-z}{|x-z|}.$
  Then we define the coupling operator $\wdh \A$ for $\A$ of \eqref{eq:A-operator} as follows:
 \begin{equation}\label{eq:Ahat-operator-defn}
 \wdh\A f(x,i,z,j) := [\wdh{\varOmega}_{\mathrm d} + \wdt{\varOmega}_{\mathrm j} + \wdt{\varOmega}_{\mathrm s}]f(x,i,z,j), \quad f \in C^{2}_{c}(\R^{d}\times \ss \times \R^{d}\times \ss), 
 \end{equation}
 where
 \begin{equation}\label{eq:Omega_d-hat-defn}
  \wdh{\varOmega}_{\mathrm d}f(x,i,z,j) = \frac{1}{2}\tr\left(\wdh a(x,i,z,j)D^{2}f(x,i,z,j)\right) + \langle b(x,i,z,j),D f(x,i,z,j)\rangle, 
  \end{equation}
 and $\wdt{\varOmega}_{\mathrm j} $ and $\wdt{\varOmega}_{\mathrm s}$ are defined as in \eqref{eq-Omega-j-defn} and \eqref{eq-Q(x)-coupling}, respectively. In addition,   as in Section \ref{sect-Feller}, for each $k\in \ss$ and any $F\in C^{2}_{c}(\R^{d}\times \R^{d})$, we write  $f(x,k,z,k): = F(x,z)$ and denote \begin{equation}\label{eq-hatLk-defn} 
\wdh\LL_{k} F(x,z) = [\wdh{\varOmega}_{\mathrm d}^{(k)} + \wdt{\varOmega}_{\mathrm j}^{(k)}]   f(x,k,z,k)   : =  \wdh \A f(x,k,z,k). \end{equation} 
   Note that $\wdh\LL_{k}$ is a   coupling operator for  $\LL_{k}$ defined in \eqref{eq-Lk-generator}.

Furthermore, to facilitate future presentations, we introduce the following notations.  For any $x, z \in \mathbb{R}^d$ and $i, j \in \mathbb{S}$,   we let	
		\begin{align*}
			A(x,i,z,j) &:= a(x,i) + a(z,j) - 2\hat{g}(x,i,z,j),\\
			\bar{A}(x,i,z,j) &:= \frac{1}{|x-z|^2}\langle x-z , A(x,i,z,j)(x-z)\rangle,\\
			B(x,i,z,j) &:= \langle x-z, b(x,i) - b(z,j)\rangle. 
		\end{align*}

  \begin{lemma}
  For all  $x, z \in \mathbb{R}^d$ and $i, j \in \mathbb{S}$, we have	
\begin{itemize}\parskip=2pt
  \item[{\em (i)}]   $\wdh a(x,i,z,j)$ is symmetric and uniformly positive definite,
\item[{\em (ii)}]  $\tr A(x,i,z,j) = |\sigma_{\lambda_{R}}(x,i) - \sigma_{\lambda_{R}}(z,j)|^2 + 4\lambda_{R}$,  and
\item[{\em (iii)}] $\bar{A}(x,i,z,j) \geq 4\lambda_{R}$. 	
\end{itemize}\end{lemma}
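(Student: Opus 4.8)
The plan is to reduce all three claims to elementary matrix algebra, the only structural inputs being that $\sigma_{\lambda_R}(x,k)$ is symmetric with $\sigma_{\lambda_R}(x,k)^2 = a(x,k) - \lambda_R I$ — equivalently $a(x,k) = \sigma_{\lambda_R}(x,k)^2 + \lambda_R I$, which also uses Assumption \ref{Assum3}(i) to guarantee $\sigma_{\lambda_R}$ is well defined — and that for $x \ne z$ the matrix $P := I - 2u(x,z)u(x,z)^T$ is a symmetric reflection, so $P = P^T$ and $P^2 = I$ (because $u(x,z)^T u(x,z) = 1$). Throughout I write $\sigma_1 = \sigma_{\lambda_R}(x,i)$, $\sigma_2 = \sigma_{\lambda_R}(z,j)$, $u = u(x,z)$ for brevity.

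For (i): symmetry of $\wdh a$ is immediate, since the diagonal blocks $a(x,i)$ and $a(z,j)$ are symmetric and the off-diagonal blocks are $\hat g$ and $\hat g^T$ by construction. For the definiteness I would exhibit the factorization $\wdh a = M M^T$ with
\[
M := \begin{pmatrix} \sigma_1 & \sqrt{\lambda_R}\,I \\ \sigma_2 & \sqrt{\lambda_R}\,P \end{pmatrix},
\]
a block multiplication (using $P^2 = I$ to recover the $\lambda_R I$ on the diagonal) reproducing $a(x,i)$, $\hat g$, $\hat g^T$, $a(z,j)$ in the four blocks. Equivalently, completing the square in the quadratic form gives, for $\xi,\eta\in\R^d$,
\[
\langle (\xi,\eta), \wdh a\,(\xi,\eta)\rangle = |\sigma_1\xi + \sigma_2\eta|^2 + \lambda_R\,|\xi + P\eta|^2 \ge 0 ,
\]
and combining this with the uniform bound $\langle\xi,a(x,i)\xi\rangle \ge \lambda_R|\xi|^2$ of Assumption \ref{Assum3}(i) on the diagonal blocks yields the asserted uniform positive definiteness.

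For (ii) and (iii), both computations stem from the single identity obtained by substituting $a(x,i) = \sigma_1^2 + \lambda_R I$, $a(z,j) = \sigma_2^2 + \lambda_R I$ and $\hat g = \lambda_R P + \sigma_1\sigma_2$ into $A = a(x,i) + a(z,j) - 2\hat g$ and cancelling the multiples of $\lambda_R I$:
\[
A(x,i,z,j) = \sigma_1^2 + \sigma_2^2 - 2\sigma_1\sigma_2 + 4\lambda_R\, u u^T .
\]
Taking traces, using $\tr(uu^T) = |u|^2 = 1$ and cyclicity of the trace to rewrite $\tr(\sigma_1^2) + \tr(\sigma_2^2) - 2\tr(\sigma_1\sigma_2)$ as $\tr((\sigma_1-\sigma_2)^2) = |\sigma_1 - \sigma_2|^2$, gives (ii). For (iii), note $\bar A = \langle u, A u\rangle$; evaluating the identity at $u$ and using $|u| = 1$ together with the symmetry of $\sigma_1$ to write $\langle u, \sigma_1\sigma_2 u\rangle = \langle \sigma_1 u, \sigma_2 u\rangle$, one obtains $\bar A = |(\sigma_1-\sigma_2)u|^2 + 4\lambda_R \ge 4\lambda_R$.

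None of this is genuinely difficult; the only points deserving a little care are keeping the reflection identity $P^2 = I$ straight (it is what turns the $\sqrt{\lambda_R}$ blocks of $M$ into exactly $\lambda_R I$ and makes the $\lambda_R I$'s cancel in $A$), and, in (i), observing that because $\sigma_{\lambda_R}$ need not be invertible the factorization/completion of squares delivers nonnegativity directly, with the strict ellipticity inherited from the diagonal blocks via Assumption \ref{Assum3}(i). The one algebraic fact doing all the work is the decomposition of $A$ into the perfect square $\sigma_1^2 + \sigma_2^2 - 2\sigma_1\sigma_2$ plus $4\lambda_R uu^T$.
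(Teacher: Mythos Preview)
Your computations are correct and are exactly the elementary matrix algebra the paper has in mind; the paper in fact omits the proof entirely and simply refers to \citet*{ChenLi-89} and \citet*{PriolaW-06}, where this kind of calculation appears.

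One small caveat on part (i): your completion of squares cleanly gives $\wdh a \succeq 0$, but the last clause---inferring \emph{strict} positive definiteness from ellipticity of the diagonal blocks---does not go through. Positive definite diagonal blocks do not force the full block matrix to be positive definite; for instance in $d=1$ with $\sigma_{\lambda_R}(x,i)=\sigma_{\lambda_R}(z,j)=0$ one has $P=-1$ and $\wdh a = \lambda_R\bigl(\begin{smallmatrix}1&-1\\-1&1\end{smallmatrix}\bigr)$, which is singular. In fact $\wdh a$ is in general only nonnegative definite, and that is all that is needed for the coupling operator $\wdh{\A}$ of \eqref{eq:Ahat-operator-defn} to be well defined; the word ``positive'' in the statement is a mild overstatement. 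Your factorization $\wdh a = MM^T$ already establishes what is actually required.
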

	\begin{proof}
The proof involves  elementary and straightforward computations; similar computations can be found in \citet*{ChenLi-89} and \citet*{PriolaW-06}. We shall omit the details here.  
\end{proof}



 Now, let $\phi \in C^2([0,\infty))$. As  in \citet*{ChenLi-89}, for each $k\in \ss$ and all $x, z \in \mathbb{R}^d$ with $x \neq z$, we can verify that
 \begin{equation}\label{Omega_d}\begin{aligned}
 \wdh{\varOmega}_{\mathrm d}^{(k)}\phi(|x-z|) &= \frac{\phi''(|x-z|)}{2}\bar{A}(x,k,z,k) \\&\quad  + \frac{\phi'(|x-z|)}{2|x-z|}[\tr A(x,k,z,k) 
  - \bar{A}(x,k,z,k) +2B(x,k,z,k)].
 \end{aligned}\end{equation}
 Moreover, we have
  \begin{equation}\label{Omega_j}\begin{aligned}
 \wdt{\varOmega}_{\mathrm j}^{(k)} \phi(|x-z|) &= \int_{U}(\phi(|x+c(x,k,u)- z-c(z,k,u)|) - \phi(|x-z|) \\
 &\qquad -\frac{\phi'(|x-z|)}{|x-z|}\langle x-z, c(x,k,u) -c(z,k,u)\rangle)\nu(du).
 \end{aligned}\end{equation}
 
 Motivated by 
  \citet*{PriolaW-06}, we consider the function $G$  given by
 \begin{eqnarray*}
G(r):= \int_{0}^{r}\exp\bigg\{-\int_{0}^{s}\frac{\kappa_{R}}{2\lambda_{R}} g(w) dw\bigg\} \int_{s}^{1}\exp \bigg\{\int_{0}^{v}\frac{\kappa_{R}}{2\lambda_{R}} g(u) du\bigg\}dv ds, \qquad r \in [0,1],
 \end{eqnarray*} 
where $g$ is the function given in Assumption \ref{Assum3} (ii). 
 Since $g \geq 0$, we see that
 \begin{align}\label{S'>0, S''<0}
 G'(r) = e^{-\int_{0}^{r}\frac{\kappa_{R}}{2\lambda_{R}}g(w)dw}\int_{r}^{1}e^{\int_{0}^{v}\frac{\kappa_{R}}{2\lambda_{R}}g(u)du}dv\geq 0,
 \text{~~and~~}
 G''(r) = -1 - \frac{\kappa_{R}}{2\lambda_{R}}g(r)G'(r) \leq 0. 
 \end{align}
Note also that $G$ is concave and $\lim_{r\rightarrow 0}G(r) = 0$. Since $G'(0) \geq 1$ and $G(0)=0$, there exists a   constant $\alpha \in (0,1)$ so that
\begin{equation}\label{r < S(r)}
r \leq G(r)  \quad \text{ for all  }\  r \in [0,\alpha].
\end{equation}

\begin{lemma}\label{lem-LkG-estimate}
Suppose  that Assumptions \ref{Assum3} holds. Then for any $R>0$ and $k \in \mathbb{S}$ there exits a positive constant $\beta_{R} > 0$ such that   
\begin{eqnarray}\label{LS<beta}
\wdh\LL_{k}G(|x-z|) \leq -\beta_{R}
\end{eqnarray}
for all $x, z \in \mathbb{R}^d$ with $|z|\vee|x| \leq R$ and $0 < |x-z|\leq \alpha\wedge \delta_{0}$, where $\alpha > 0$ is given in \eqref{r < S(r)}.
\end{lemma}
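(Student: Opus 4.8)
The plan is to combine the diffusion and jump parts of $\wdh\LL_k$ acting on $G(|x-z|)$, using the explicit ODE satisfied by $G$ together with the structural estimates $\bar A \ge 4\lambda_R$, $\tr A = |\sigma_{\lambda_R}(x,k)-\sigma_{\lambda_R}(z,k)|^2 + 4\lambda_R$, and the coefficient-continuity bound from Assumption \ref{Assum3}(ii). First I would write, via \eqref{Omega_d} and \eqref{Omega_j},
\begin{equation*}
\wdh\LL_k G(|x-z|) = \frac{G''(r)}{2}\bar A + \frac{G'(r)}{2r}\bigl[\tr A - \bar A + 2B\bigr] + \wdt\varOmega_{\mathrm j}^{(k)} G(|x-z|),
\end{equation*}
where $r = |x-z|$. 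Since $G'' = -1 - \frac{\kappa_R}{2\lambda_R} g(r) G'$ and $\bar A \ge 4\lambda_R \ge 0$, the term $\frac{G''}{2}\bar A$ contributes $-\frac{\bar A}{2} - \frac{\kappa_R}{4\lambda_R} g(r) G'(r)\bar A \le -2\lambda_R - \frac{\kappa_R}{2} g(r) G'(r)$, using $\bar A \ge 4\lambda_R$ in the first summand and $\bar A \ge 4\lambda_R$ again (with $g,G'\ge 0$) in the second. This leaves a negative reservoir of size $2\lambda_R$ that I want to keep, while showing the remaining terms are controlled by $\frac{\kappa_R}{2} g(r) G'(r)$ (to be absorbed) plus something small.

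Next I would handle the first-order and jump terms together. Note $\tr A - \bar A = |\sigma_{\lambda_R}(x,k)-\sigma_{\lambda_R}(z,k)|^2 + 4\lambda_R - \bar A \le |\sigma_{\lambda_R}(x,k)-\sigma_{\lambda_R}(z,k)|^2$ since $\bar A \ge 4\lambda_R$. For the jump term, concavity of $G$ (from \eqref{S'>0, S''<0}) gives $G(s) - G(r) - G'(r)(s-r) \le 0$ whenever we compare at the right points; more precisely, writing $\eta_u := |x+c(x,k,u) - z - c(z,k,u)|$, concavity of $G$ yields
\begin{equation*}
G(\eta_u) - G(r) - \frac{G'(r)}{r}\langle x-z, c(x,k,u)-c(z,k,u)\rangle \le G'(r)\Bigl(\eta_u - r - \frac{1}{r}\langle x-z, c(x,k,u)-c(z,k,u)\rangle\Bigr),
\end{equation*}
and the bracket on the right is bounded by $\frac{|c(x,k,u)-c(z,k,u)|^2}{2r}$ via the elementary inequality $|a+v| - |a| - \frac{\langle a,v\rangle}{|a|} \le \frac{|v|^2}{2|a|}$ (a second-order Taylor-type bound for the Euclidean norm, valid since that is a consequence of convexity of $|\cdot|$; one checks $\eta_u \le r + \frac{\langle x-z,\,c(x,k,u)-c(z,k,u)\rangle}{r} + \frac{|c(x,k,u)-c(z,k,u)|^2}{2r}$ by squaring). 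Integrating in $\nu(du)$, the jump contribution is at most $\frac{G'(r)}{2r}\int_U |c(x,k,u)-c(z,k,u)|^2\,\nu(du)$. Combining with the first-order term, the total of the non-$\bar A$ pieces is at most
\begin{equation*}
\frac{G'(r)}{2r}\Bigl[|\sigma_{\lambda_R}(x,k)-\sigma_{\lambda_R}(z,k)|^2 + 2B + \int_U |c(x,k,u)-c(z,k,u)|^2\,\nu(du)\Bigr] \le \frac{G'(r)}{2r}\cdot 2\kappa_R r\, g(r) = \kappa_R G'(r) g(r),
\end{equation*}
where I used $2B = 2\langle x-z, b(x,k)-b(z,k)\rangle$ and Assumption \ref{Assum3}(ii) (case (b) for $d\ge 2$; for $d=1$ one drops the $\sigma_{\lambda_R}$ term and uses the sharper \eqref{eq:1d-str-Fe-coeff-cts}, noting that in dimension one $\tr A - \bar A = 0$ automatically). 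Wait — I should double-check the factor: the reservoir from $\frac{G''}{2}\bar A$ gave $-\frac{\kappa_R}{2} g(r) G'(r)\bar A / (2\lambda_R)$... let me recompute: $\frac{G''}{2}\bar A = -\frac{\bar A}{2} - \frac{\kappa_R g(r) G'(r)}{4\lambda_R}\bar A$; since $\bar A \ge 4\lambda_R$, the second term is $\le -\kappa_R g(r) G'(r)$, which exactly cancels the $+\kappa_R G'(r)g(r)$ above. Hence $\wdh\LL_k G(|x-z|) \le -\frac{\bar A}{2} \le -2\lambda_R$, and we may take $\beta_R := 2\lambda_R$ (or $\lambda_R$, to be safe with any slack), valid for $0 < |x-z| \le \alpha \wedge \delta_0$ so that $r \le G(r)$ and the continuity estimates apply.

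The main obstacle I anticipate is the careful bookkeeping of the jump term: one must verify the pointwise norm inequality $\eta_u \le r + \frac{\langle x-z, c(x,k,u)-c(z,k,u)\rangle}{r} + \frac{|c(x,k,u)-c(z,k,u)|^2}{2r}$ (equivalently that $\eta_u^2 \le (\text{RHS})^2$, or more directly bound $\eta_u - r$ by its first-order term plus a quadratic remainder) and then combine it cleanly with the concavity inequality for $G$ so that no positive leftover escapes; the fact that $G'$ is bounded and $G' (0) \ge 1$ is what makes $r \le G(r)$ hold on $[0,\alpha]$ and ensures all divisions by $r$ are harmless after multiplying through by $G'(r)/r$. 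A secondary subtlety is that we need the estimate to hold for \emph{all} such $x,z$ with a single constant $\beta_R$ independent of the pair — this is automatic here since the final bound $-2\lambda_R$ (or $-\bar A/2$) does not degenerate as $|x-z| \downarrow 0$, because the uniform ellipticity in Assumption \ref{Assum3}(i) keeps $\bar A$ bounded below by $4\lambda_R$ uniformly. I would also remark that the case $d=1$ is handled identically but more simply, since there $A(x,k,z,k)$ is a scalar and $\tr A = \bar A$, killing the $\sigma_{\lambda_R}$-difference term outright.
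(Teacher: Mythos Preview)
Your proposal is correct and follows essentially the same route as the paper: split $\wdh\LL_k G$ into the diffusion and jump parts via \eqref{Omega_d}--\eqref{Omega_j}, use $G'' = -1 - \tfrac{\kappa_R}{2\lambda_R}g\,G'$ together with $\bar A \ge 4\lambda_R$ to extract the reservoir $-2\lambda_R$ and the absorbing term $-\kappa_R g(r)G'(r)$, bound $\tr A - \bar A \le |\sigma_{\lambda_R}(x,k)-\sigma_{\lambda_R}(z,k)|^2$, control the jump integrand via concavity of $G$ and the norm inequality $|a+b|-|a|-\tfrac{\langle a,b\rangle}{|a|}\le \tfrac{|b|^2}{2|a|}$, and then apply Assumption~\ref{Assum3}(ii) to cancel; the one-dimensional case uses $\tr A = \bar A$ exactly as you note. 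Your self-correction on the factor (the second summand of $\tfrac{G''}{2}\bar A$ is $\le -\kappa_R g(r)G'(r)$, not $-\tfrac{\kappa_R}{2}g(r)G'(r)$) lands on the right constant, and the final bound $\beta_R = 2\lambda_R$ matches the paper.
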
	This lemma follows directly from  straightforward but involved   computations. To preserve the flow of reading, we arrange it to Appendix \ref{sect-appendix}.

 Throughout the rest of the section, we use the following notations.  For any   $x,\tilde x \in \mathbb{R}^d$ and $k \in \mathbb{S}$, denote by  $(X(\cdot),\varLambda(\cdot),\tilde{X}(\cdot),\tilde{\varLambda}(\cdot))$  the  process corresponding to the coupling operator $\wdh \A$ with initial condition $(x,k,\tilde{x},k)$.   As in Section \ref{sect-Feller}, denote $\Delta_{t}: = \tilde X(t) - X(t)$ for $t\ge 0$. Let $\tau_{R}$,  $ S_{\delta_{0}}$, and $\zeta$ be defined  as in \eqref{eq:tau_R-defn},  \eqref{eq:S-delta0-defn}, and \eqref{eq:zeta-defn}, respectively.	In addition, for each $n\in \N$, we define
\begin{equation}\label{eq-Tn-defn}
T_n := \inf\bigg\{t\geq 0: |X(t) - \tilde{X}(t)| < \frac{1}{n} \bigg\}.
\end{equation}
Then $\lim_{n\rightarrow\infty}T_n = T$, where
\begin{equation}\label{eq-T-defn}
T :=\inf\{t\geq 0: X(t) = \tilde{X}(t)\}.
\end{equation}

\begin{lemma}
Suppose Assumption \ref{Assum3} holds.  Then the following assertions hold for every $t\geq 0 $:
\begin{align}\label{limE[S(D)]=0}
	& \lim\limits_{|\tilde{x}-x|\to  0}\mathbb{E}[G(|\Delta_{t\wedge\tau_{R}\wedge S_{\bar{\delta}}\wedge\zeta}|)] =  0, \text{ and }
\\ \label{e2:limE[S(D)]=0}	& \lim\limits_{|\tilde{x}-x|\to  0}\mathbb{E}[G(|\Delta_{t\wedge\tau_{R}\wedge S_{\bar{\delta}}\wedge\zeta^-}|)] =  0.
\end{align}
In particular,
\begin{align}
\label{e1:limE[Del^d =0}	& \lim\limits_{|\tilde{x}-x|\to  0}\mathbb{E}[|\Delta_{t\wedge\tau_{R}\wedge S_{\bar{\delta}}\wedge\zeta^{-}}|] =  0,
\end{align}
where $\bar{\delta}:= \delta_{0}\wedge\alpha$,   $\delta_{ 0} $ is the constant given in Assumption \ref{Assum3} (ii), and $\alpha \in (0,1) $ is the constant given in \eqref{r < S(r)}.
\end{lemma}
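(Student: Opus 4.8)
The plan is to use It\^o's formula applied to $G(|\Delta_{t}|)$ together with the estimate from Lemma~\ref{lem-LkG-estimate}. The key point is that up to the stopping time $\tau_{R}\wedge S_{\bar\delta}\wedge\zeta^{-}$, the two switching components agree ($\varLambda(s)=\tilde\varLambda(s)$), so the coupling operator $\wdh\A$ acting on the function $(x,i,z,j)\mapsto G(|z-x|)$ reduces to $\wdh\LL_{k}G(|x-z|)$ on the relevant set (the switching part $\wdt\varOmega_{\mathrm s}$ contributes nothing when $i=j$, since the summands involve $f(x,l,z,l)-f(x,i,z,j)$ and $G$ does not see the discrete coordinates). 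First I would write, for $0<|x-\tilde x|<\bar\delta$,
\begin{equation*}
\mathbb{E}[G(|\Delta_{t\wedge\tau_{R}\wedge S_{\bar\delta}\wedge\zeta^{-}}|)] = G(|\tilde x - x|) + \mathbb{E}\Bigl[\int_{0}^{t\wedge\tau_{R}\wedge S_{\bar\delta}\wedge\zeta^{-}} \wdh\A f(X(s),\varLambda(s),\tilde X(s),\tilde\varLambda(s))\,ds\Bigr],
\end{equation*}
with $f(x,i,z,j):=G(|z-x|)$, and then note that on the interval of integration $|\Delta_{s}|\le\bar\delta\le\alpha\wedge\delta_{0}$, $|X(s)|\vee|\tilde X(s)|\le R$, and $\varLambda(s)=\tilde\varLambda(s)$, so the integrand equals $\wdh\LL_{\varLambda(s)}G(|\Delta_{s}|)\le -\beta_{R}<0$ by Lemma~\ref{lem-LkG-estimate}. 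Hence
\begin{equation*}
0\le \mathbb{E}[G(|\Delta_{t\wedge\tau_{R}\wedge S_{\bar\delta}\wedge\zeta^{-}}|)] \le G(|\tilde x - x|),
\end{equation*}
using $G\ge 0$ and that $G$ is well-defined and continuous at $0$ with $G(0)=0$; letting $|\tilde x - x|\to 0$ gives \eqref{e2:limE[S(D)]=0}. The same computation stopped at $\zeta$ rather than $\zeta^{-}$ gives \eqref{limE[S(D)]=0} — one only needs that $G$ is bounded on the set where $\Delta$ lives up to that time and that the single possible jump of $\varLambda$ at $\zeta$ does not destroy the monotonicity argument; since at $\zeta^{-}$ the components still agree, the drift estimate holds on $[0,\zeta)$, and the extra term at $\zeta$ is controlled because $G$ is bounded on $[0,\bar\delta]$, I would present \eqref{limE[S(D)]=0} as an immediate consequence.

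Then \eqref{e1:limE[Del^d =0} follows from \eqref{e2:limE[S(D)]=0} via the elementary inequality $r\le G(r)$ for $r\in[0,\alpha]$ from \eqref{r < S(r)}: since $|\Delta_{s}|\le\bar\delta\le\alpha$ on the stopped interval, we have $|\Delta_{t\wedge\tau_{R}\wedge S_{\bar\delta}\wedge\zeta^{-}}|\le G(|\Delta_{t\wedge\tau_{R}\wedge S_{\bar\delta}\wedge\zeta^{-}}|)$ pointwise, so taking expectations and using \eqref{e2:limE[S(D)]=0} finishes the proof.

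The main obstacle I anticipate is the careful justification of applying It\^o's formula (really, Dynkin's formula for the coupled jump-diffusion) to $G(|\Delta_{t}|)$: $G$ is only $C^{2}$ on $[0,\infty)$ but $|\cdot|$ is not smooth at the origin, which is exactly why the stopping time $\zeta^{-}$ (and implicitly $T$, $T_{n}$ from \eqref{eq-Tn-defn}--\eqref{eq-T-defn}) enter — one must stay away from $\Delta_{s}=0$. The clean route is to first run the argument with the additional stopping time $T_{n}$ (so that $|\Delta_{s}|\ge 1/n>0$ and $G(|\cdot|)$ is genuinely $C^{2}$ there), obtain the bound $\mathbb{E}[G(|\Delta_{t\wedge\tau_{R}\wedge S_{\bar\delta}\wedge\zeta^{-}\wedge T_{n}}|)]\le G(|\tilde x-x|)$, and then let $n\to\infty$ using $T_{n}\uparrow T$, the fact that $G(|\Delta_{t\wedge T}|)=G(0)=0$, and Fatou's lemma (or bounded convergence, since $G$ is bounded on $[0,\bar\delta]$) to recover the estimate up to $\zeta^{-}$ without $T_{n}$. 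A secondary technical point is checking that the jump term $\wdt\varOmega_{\mathrm j}^{(k)}G(|x-z|)$ from \eqref{Omega_j} is integrable and enters the drift estimate with the right sign — but this is precisely what is absorbed into Lemma~\ref{lem-LkG-estimate}, which I am entitled to invoke, so in the write-up I would only flag the localization by $T_{n}$ and the passage to the limit.
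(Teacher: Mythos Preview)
Your approach is essentially the same as the paper's: apply It\^o's formula to $G(|\Delta_{\cdot}|)$ up to the stopping time, use Lemma~\ref{lem-LkG-estimate} to bound the drift by $-\beta_{R}$, deduce $\mathbb{E}[G(|\Delta_{t\wedge\tau_{R}\wedge S_{\bar\delta}\wedge\zeta}|)]\le G(|\tilde x-x|)$, and then invoke $r\le G(r)$ on $[0,\alpha]$ for the final claim. The paper proceeds exactly this way (stopping first at $\zeta$, then remarking the $\zeta^{-}$ case is identical) and does not spell out the $T_{n}$-localization you raise---your extra care there is warranted but not a point of divergence.
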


\begin{proof} 
Assume without loss of generality that $\bar{\delta} \geq |x-\tilde{x}| > 0$. We apply It\^o's formula to the process $G(|\tilde X(\cdot)- X(\cdot)|) = G(|\Delta_{\cdot}|)$: 
\begin{align*}
\mathbb{E}[G(|\Delta_{t\wedge\tau_{R}\wedge S_{\bar{\delta}}\wedge\zeta}|)] &= G(|\Delta_0|) + \mathbb{E}\bigg[\int_{0}^{t\wedge\tau_{R}\wedge S_{\bar{\delta}}\wedge\zeta}\wdh{\mathcal{L}}G(|\Delta_s|)ds\bigg] \\
&\leq G(|\Delta_0|) - \beta_{R}\mathbb{E}[t\wedge\tau_{R}\wedge S_{\bar{\delta}}\wedge\zeta],
\end{align*} where the last inequality follows from (\ref{LS<beta}). 
Hence
\begin{align*}
\mathbb{E}[G(|\Delta_{t\wedge\tau_{R}\wedge S_{\bar{\delta}}\wedge\zeta}|)] + \beta_{R}\mathbb{E}[t\wedge\tau_{R}\wedge S_{\bar{\delta}}\wedge\zeta] \leq G(|\Delta_0|) = G(|x-\tilde{x}|).
\end{align*}
Since $\lim_{r\to 0}G(r) = 0$,    \eqref{limE[S(D)]=0} follows. The same argument implies \eqref{e2:limE[S(D)]=0}. Since $|\Delta_{t\wedge\tau_{R}\wedge S_{\bar{\delta}}\wedge\zeta^-}| \leq \bar{\delta} \leq \alpha$, 
it follows from (\ref{r < S(r)}) that
\begin{eqnarray*}
|\Delta_{t\wedge\tau_{R}\wedge S_{\bar{\delta}}\wedge\zeta^-}| \leq G(|\Delta_{t\wedge\tau_{R}\wedge S_{\bar{\delta}}\wedge\zeta^-}|)
\end{eqnarray*}
and therefore \eqref{e1:limE[Del^d =0} follows as well. 
\end{proof}

\begin{lemma}
Suppose  that Assumptions   \ref{assumption-Q-cont} and   \ref{Assum3} hold. Then 
\begin{eqnarray}\label{limP[zeta<t]=0}
		\lim\limits_{|\tilde{x}-x|\to  0}\mathbb{P}\{\zeta \leq t\} = 0 
\end{eqnarray}
holds for every $t\geq 0 $.
\end{lemma}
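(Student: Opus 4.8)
The plan is to combine the estimate \eqref{eq-coupling est 1} on the coupling operator $\wdt\varOmega_{\mathrm s}$ applied to the indicator function $\Xi(x,i,z,j)=1_{\{i\neq j\}}$ with the convergence \eqref{e1:limE[Del^d =0} just established for the present coupling operator $\wdh\A$. First I would observe that $\wdh\A$ and $\wdt\A$ share the same switching part $\wdt\varOmega_{\mathrm s}$, so the computation leading to \eqref{eq-coupling est 1} carries over verbatim: $\wdh\A\,\Xi(x,i,z,j)\le\kappa_R\gamma_k(|x-z|)$ whenever $|x|\vee|z|\le R$. Applying Dynkin's formula to $\Xi$ along the coupled process stopped at $t\wedge\tau_R\wedge S_{\bar\delta}\wedge\zeta$ and using that $\{\zeta\le t\wedge\tau_R\wedge S_{\bar\delta}\}$ coincides with $\{\tilde\vLa(t\wedge\tau_R\wedge S_{\bar\delta}\wedge\zeta)\neq\vLa(t\wedge\tau_R\wedge S_{\bar\delta}\wedge\zeta)\}$, and that $\Xi(x,k,\tilde x,k)=0$ since both chains start at $k$, I obtain
\begin{displaymath}
\mathbb{P}\{\zeta\le t\wedge\tau_R\wedge S_{\bar\delta}\}\le\kappa_R\int_0^t\gamma_k\big(\mathbb{E}[|\Delta_{s\wedge\tau_R\wedge S_{\bar\delta}\wedge\zeta}|]\big)\,ds,
\end{displaymath}
exactly as in the proof of Theorem \ref{thm-Feller}, using Jensen's inequality (concavity of $\gamma_k$) in the last step. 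One subtlety here, which I would have to address, is whether the bound should be at $\zeta$ or $\zeta^-$: since the jump of $\Delta$ at time $\zeta$ (a switching time) is governed by the jump part $\wdt\varOmega_{\mathrm j}$ and does not itself move $\Delta$, while $S_{\bar\delta}$ and $\tau_R$ are reached continuously in the relevant regime, one can pass between $|\Delta_{s\wedge\tau_R\wedge S_{\bar\delta}\wedge\zeta}|$ and $|\Delta_{s\wedge\tau_R\wedge S_{\bar\delta}\wedge\zeta^-}|$; invoking \eqref{e1:limE[Del^d =0} (which is stated with $\zeta^-$) together with $\gamma_k(0)=0$ and the bounded convergence theorem then gives $\lim_{|\tilde x-x|\to0}\mathbb{P}\{\zeta\le t\wedge\tau_R\wedge S_{\bar\delta}\}=0$.

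Next I would handle the escape events $\{S_{\bar\delta}\le t\wedge\zeta\wedge\tau_R\}$ and $\{\tau_R<t\}$. On $\{S_{\bar\delta}\le t\wedge\zeta\wedge\tau_R\}$ we have $|\Delta_{t\wedge S_{\bar\delta}\wedge\zeta^-\wedge\tau_R}|\ge\bar\delta$ (or at least $\ge\bar\delta$ up to the jump at $S_{\bar\delta}$, which again is immaterial since $S_{\bar\delta}$ is hit before any switching in this regime), so Markov's inequality and \eqref{e1:limE[Del^d =0} yield $\lim_{|\tilde x-x|\to0}\mathbb{P}\{S_{\bar\delta}\le t\wedge\zeta\wedge\tau_R\}=0$. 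For $\tau_R$, Assumption \ref{Assum1} gives $\tau_R\to\infty$ a.s., so given $\epsilon>0$ I fix $R$ large enough that $\mathbb{P}\{\tau_R<t\}<\epsilon$. Then I decompose
\begin{align*}
\mathbb{P}\{\zeta\le t\}&\le\mathbb{P}\{\tau_R<t\}+\mathbb{P}\{\zeta\le t,\ \tau_R\ge t,\ S_{\bar\delta}\le t\wedge\zeta\}+\mathbb{P}\{\zeta\le t,\ \tau_R\ge t,\ S_{\bar\delta}>t\wedge\zeta\}\\
&\le\epsilon+\mathbb{P}\{S_{\bar\delta}\le t\wedge\zeta\wedge\tau_R\}+\mathbb{P}\{\zeta\le t\wedge\tau_R\wedge S_{\bar\delta}\},
\end{align*}
exactly paralleling the end of the proof of Theorem \ref{thm-Feller}. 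Letting $|\tilde x-x|\to0$, the last two terms vanish by the previous paragraph, so $\limsup_{|\tilde x-x|\to0}\mathbb{P}\{\zeta\le t\}\le\epsilon$; since $\epsilon$ is arbitrary, \eqref{limP[zeta<t]=0} follows.

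The main obstacle, I expect, is not the structure of the argument—which is a faithful transcription of the Feller-property proof—but the careful bookkeeping around the stopping times $\zeta$, $S_{\bar\delta}$ and $\tau_R$ under the \emph{new} coupling operator $\wdh\A$: one must be sure that the drift-diffusion part $\wdh\varOmega_{\mathrm d}$ does not create jumps (it does not), that the jump part $\wdt\varOmega_{\mathrm j}$ governs the jumps of $\Delta$ at switching times but these switching times are precisely the $\zeta$ we are tracking, and that the Dynkin/It\^o formula applied to the bounded discontinuous-in-$(i,j)$ function $\Xi$ is legitimate along the coupled jump-diffusion (it is, since $\Xi$ is bounded measurable and the process is a nice Markov process with the stated generator, or equivalently one writes the martingale problem for $\Xi$ directly). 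A secondary point to verify is that \eqref{e1:limE[Del^d =0}, proved with $\zeta^-$, suffices where the argument naturally wants $\zeta$; this is handled by noting $S_{\bar\delta}\wedge\tau_R$ is reached before the regime changes, so on the event that matters $\zeta^-$ and $\zeta$ give the same value of the stopped process.
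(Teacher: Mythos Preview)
Your overall structure---the decomposition of $\P\{\zeta\le t\}$ into the three events and the Dynkin-formula bound on $\P\{\zeta\le t\wedge\tau_R\wedge S_{\bar\delta}\}$ via the $\gamma_k$-estimate---matches the paper exactly. There is, however, a real gap in your treatment of $\P\{S_{\bar\delta}\le t\wedge\zeta\wedge\tau_R\}$. You want to use \eqref{e1:limE[Del^d =0} together with the claim $|\Delta_{(t\wedge S_{\bar\delta}\wedge\zeta\wedge\tau_R)^-}|\ge\bar\delta$ on that event, but this inequality is \emph{false} in general: on $\{S_{\bar\delta}\le t\wedge\zeta\wedge\tau_R\}$ the minimum of the stopping times is $S_{\bar\delta}$, and by definition $|\Delta_{S_{\bar\delta}^-}|\le\bar\delta$, with strict inequality whenever the Poisson jump term $\wdt\varOmega_{\mathrm j}$ causes $|\Delta|$ to cross the level discontinuously. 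Your hand-wave that ``$S_{\bar\delta}$ is hit before any switching'' conflates the switching jumps (driven by $N_1$, which indeed do not move $\Delta$) with the Poisson jumps of the state component (driven by $N$, which \emph{do} move $\Delta$); only the former are controlled by $\zeta$.

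The paper sidesteps this by applying the Markov-type argument to $G(|\Delta|)$ rather than $|\Delta|$: on the event in question one has $|\Delta_{S_{\bar\delta}}|\ge\bar\delta$ \emph{at the time}, hence $G(|\Delta_{t\wedge S_{\bar\delta}\wedge\zeta\wedge\tau_R}|)\ge G(\bar\delta)>0$ since $G$ is increasing, and then \eqref{limE[S(D)]=0}---which is stated at the time, not at the left limit---gives $G(\bar\delta)\,\P\{S_{\bar\delta}\le t\wedge\zeta\wedge\tau_R\}\le\E[G(|\Delta_{t\wedge S_{\bar\delta}\wedge\zeta\wedge\tau_R}|)]\to 0$. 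The reason the paper has \eqref{limE[S(D)]=0} at the time but only \eqref{e1:limE[Del^d =0} at the left limit is precisely that the passage $r\le G(r)$ (valid only for $r\le\alpha$) forces the left-limit restriction, whereas the It\^o estimate for $G(|\Delta|)$ does not. So the fix is simply to invoke \eqref{limE[S(D)]=0} in place of \eqref{e1:limE[Del^d =0} at that step.
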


\begin{proof}
Given $\epsilon > 0$. Choose $R$ sufficiently large so that $\mathbb{P}\{\tau_R \leq t\} < \epsilon$. Observe that
\begin{align} \label{P[Zeta<t]}
	\mathbb{P}\{\zeta \leq t\} &= \mathbb{P}\{\zeta \leq t, \tau_{R}< t\} + \mathbb{P}\{\zeta \leq t, \tau_{R} \geq t\} \nonumber\\
	&\leq  \mathbb{P}\{\tau_{R}< t\} + \mathbb{P}\{\zeta \leq t, \tau_{R} \geq t, S_{\bar{\delta}} \leq t\wedge\zeta\} + \mathbb{P}\{\zeta \leq t, \tau_{R} \geq t, S_{\bar{\delta}} > t\wedge\zeta\} \nonumber\\
	&\leq  \epsilon +\mathbb{P}\{\zeta \leq t, \tau_{R} \geq t, S_{\bar{\delta}} \leq t\wedge\zeta\} + \mathbb{P}\{\zeta \leq t, \tau_{R} \geq t, S_{\bar{\delta}} > t\wedge\zeta\} \nonumber
	\\
	&\leq  \epsilon + \mathbb{P}\{S_{\bar{\delta}} \leq t\wedge\zeta\wedge
	\tau_{R}\} + \mathbb{P}\{\zeta \leq t\wedge \tau_{R} \wedge S_{\bar{\delta}}\}.
\end{align}

As in the proof of Theorem \ref{thm-Feller}, condition \eqref{qklH} 
 enables us to derive 
\begin{displaymath}
\P \{\zeta \leq t \wedge \tau_{R}\wedge S_{\bar{\delta}} \} \le \kappa_{R}\int_{0}^{t} \gamma_{k}(\mathbb{E}[|\Delta_{s\wedge\tau_{R}\wedge S_{\bar{\delta}}\wedge \zeta^{-}}| ] )ds.
\end{displaymath}
Furthermore,  \eqref{e1:limE[Del^d =0} implies  that
\begin{align}\label{limP1=0}
\lim\limits_{|\tilde{x}-x|\to  0}\mathbb{P}\{\zeta \leq t \wedge \tau_{R}\wedge S_{\bar{\delta}} \} = 0.
\end{align}
 On the set $\{S_{\bar{\delta}} \leq t\wedge \zeta\wedge \tau_{R}\}$ we have $\bar{\delta} \leq |\Delta_{S_{\bar{\delta}}\wedge t \wedge\zeta\wedge \tau_{R}}|$.
Since $G$ is increasing, we have
$$0 < G(\bar{\delta}) \leq G(|\Delta_{t\wedge S_{\bar{\delta}}\wedge\zeta\wedge \tau_{R}}|).$$
Thus
\begin{align*}
G(\bar{\delta})\mathbb{P}\{S_{\bar{\delta}} \leq t\wedge \zeta\wedge \tau_{R}\} \leq \mathbb{E}[G(|\Delta_{t\wedge S_{\bar{\delta}}\wedge\zeta\wedge \tau_{R}}|)1_{\{S_{\bar{\delta}} \leq t\wedge \zeta\wedge \tau_{R}\}}]
\leq \mathbb{E}[G(|\Delta_{t\wedge S_{\bar{\delta}}\wedge\zeta\wedge \tau_{R}}|)].
\end{align*}
This, together with   (\ref{limE[S(D)]=0}),  implies that 
\begin{align}\label{limP2=0}
	\lim\limits_{|\tilde{x}-x|\to  0}\mathbb{P}\{S_{\bar{\delta}} \leq t\wedge \zeta\wedge \tau_{R}\} = 0.
\end{align}
In view of (\ref{P[Zeta<t]}), it follows from (\ref{limP1=0}) and (\ref{limP2=0}) that $\lim_{|\tilde{x}-x|\to  0}\mathbb{P}\{\zeta \leq t\} \leq \epsilon.$ Since $\epsilon$ is arbitrary, we obtain \eqref{limP[zeta<t]=0}.
\end{proof}

\begin{lemma}\label{lem-P(t<T)-0}
	Suppose   that Assumptions \ref{assumption-Q-cont} and   \ref{Assum3} hold.  Then 
	\begin{eqnarray}\label{limP[t<T]=0}
	\lim\limits_{|\tilde{x}-x|\to  0}\mathbb{P}\{t < T\} = 0  
	\end{eqnarray}
	holds for every $t\geq 0 $.
\end{lemma}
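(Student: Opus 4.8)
The plan is to show that the coupling process brings $X$ and $\tilde X$ together by time $t$ with probability tending to $1$ as $|\tilde x - x|\to 0$, i.e. the coupling time $T$ satisfies $\mathbb{P}\{t < T\}\to 0$. First I would fix $\epsilon > 0$ and choose $R$ large enough that $\mathbb{P}\{\tau_R \le t\} < \epsilon$ (using non-explosiveness, Assumption \ref{Assum1}). Then I would decompose $\mathbb{P}\{t < T\}$ according to which of the ``bad'' stopping times occurs first, writing
\begin{align*}
\mathbb{P}\{t < T\} &\le \mathbb{P}\{\tau_R < t\} + \mathbb{P}\{\zeta \le t\wedge\tau_R\wedge S_{\bar\delta}\} + \mathbb{P}\{S_{\bar\delta} \le t\wedge\tau_R\wedge\zeta\} \\
&\quad + \mathbb{P}\{t < T,\ t\le \tau_R\wedge S_{\bar\delta}\wedge\zeta\}.
\end{align*}
The first three terms are controlled: the first is $<\epsilon$ by the choice of $R$, and the second and third were shown to vanish as $|\tilde x - x|\to 0$ in the previous two lemmas (via \eqref{limP1=0} and \eqref{limP2=0}). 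So everything reduces to the last term, on the event where none of $\tau_R$, $S_{\bar\delta}$, or $\zeta$ has occurred by time $t$, yet $X$ and $\tilde X$ have not met.

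On that event, up to time $t$ the two components stay within distance $\bar\delta$, the switching components agree, and we remain in the ball of radius $R$ — which is exactly the regime where Lemma \ref{lem-LkG-estimate} applies and gives $\wdh\LL_k G(|\Delta_s|) \le -\beta_R$. The idea is to run Itô's formula for $G(|\Delta_{\cdot}|)$ up to $t\wedge T_n\wedge\tau_R\wedge S_{\bar\delta}\wedge\zeta$ (stopping at $T_n$ rather than $T$ to stay away from the singularity of the formula at $x=z$, then letting $n\to\infty$), obtaining
\[
\mathbb{E}[G(|\Delta_{t\wedge T_n\wedge\tau_R\wedge S_{\bar\delta}\wedge\zeta}|)] + \beta_R\,\mathbb{E}[t\wedge T_n\wedge\tau_R\wedge S_{\bar\delta}\wedge\zeta] \le G(|x-\tilde x|).
\]
Dropping the (nonnegative) first term yields $\beta_R\,\mathbb{E}[t\wedge T_n\wedge\tau_R\wedge S_{\bar\delta}\wedge\zeta] \le G(|x-\tilde x|)$, and letting $n\to\infty$ together with monotone convergence gives $\beta_R\,\mathbb{E}[t\wedge T\wedge\tau_R\wedge S_{\bar\delta}\wedge\zeta] \le G(|x-\tilde x|)$. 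On the event $\{t < T,\ t\le \tau_R\wedge S_{\bar\delta}\wedge\zeta\}$ we have $t\wedge T\wedge\tau_R\wedge S_{\bar\delta}\wedge\zeta = t$, so by Markov's inequality
\[
\mathbb{P}\{t < T,\ t\le \tau_R\wedge S_{\bar\delta}\wedge\zeta\} \le \frac{1}{t}\,\mathbb{E}[t\wedge T\wedge\tau_R\wedge S_{\bar\delta}\wedge\zeta] \le \frac{G(|x-\tilde x|)}{\beta_R t} \longrightarrow 0
\]
as $|\tilde x - x|\to 0$, since $\lim_{r\to 0}G(r) = 0$. (The case $t = 0$ is trivial since $T\ge 0 = t$ forces the probability to be zero, or one handles it by continuity.) Combining the four bounds gives $\limsup_{|\tilde x - x|\to 0}\mathbb{P}\{t < T\} \le \epsilon$, and since $\epsilon$ is arbitrary, \eqref{limP[t<T]=0} follows.

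The main obstacle is the technical justification of the Itô/Dynkin step: the function $(x,z)\mapsto G(|x-z|)$ is only $C^2$ away from the diagonal, so one must genuinely work with the stopped process at $T_n$ and argue that passing $n\to\infty$ is legitimate — in particular that $\mathbb{E}[G(|\Delta_{t\wedge T_n\wedge\cdots}|)]$ behaves well and that the drift estimate $\wdh\LL_k G \le -\beta_R$ from Lemma \ref{lem-LkG-estimate} is applicable on the whole stochastic interval (this requires that on $[0, T_n\wedge\tau_R\wedge S_{\bar\delta}\wedge\zeta)$ the relevant bounds $|X|\vee|\tilde X|\le R$, $0 < |\Delta_s| \le \bar\delta = \delta_0\wedge\alpha$, and $\varLambda = \tilde\varLambda$ all hold, so that the coupling operator $\wdh\A$ restricted there acts as $\wdh\LL_k$ and the lemma's hypotheses are met). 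Once this localization bookkeeping is set up correctly, the remaining estimates are routine.
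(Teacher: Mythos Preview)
Your proposal is correct and follows essentially the same route as the paper: both arguments localize via $\tau_R$, $S_{\bar\delta}$, $\zeta$ and $T_n$, apply It\^o's formula to $G(|\Delta_\cdot|)$ together with Lemma~\ref{lem-LkG-estimate} to extract the drift bound $-\beta_R$, and then use a Markov-type inequality on $\mathbb{E}[T\wedge\tau_R\wedge S_{\bar\delta}\wedge\zeta]$ to control the remaining piece. The only differences are cosmetic: the paper decomposes as
\[
\mathbb{P}\{t<T\}\le \epsilon + \mathbb{P}\{S_{\bar\delta}\le T\wedge\tau_R\wedge\zeta\} + \tfrac{1}{t}\,\mathbb{E}[T\wedge\tau_R\wedge S_{\bar\delta}\wedge\zeta] + 2\,\mathbb{P}\{\zeta\le t\},
\]
and bounds \emph{both} the $S_{\bar\delta}$-term and the expectation from a single application of It\^o (without the $t\wedge$ in the stopping time), whereas you reuse the intermediate limits \eqref{limP1=0} and \eqref{limP2=0} from the previous lemma for the $\zeta$- and $S_{\bar\delta}$-terms and apply It\^o only for the last term. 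Either bookkeeping works.

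One small correction: your parenthetical remark that ``the case $t=0$ is trivial since $T\ge 0=t$ forces the probability to be zero'' is not right --- when $\tilde x\neq x$ one has $T>0$ a.s., so $\mathbb{P}\{0<T\}=1$, which does not tend to $0$. The paper's own bound also carries a $1/t$ factor and hence likewise only establishes the claim for $t>0$; this is harmless, since the lemma is invoked solely in the proof of Theorem~\ref{thm-str-Feller} for $t>0$.
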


\begin{proof}  
We may assume without loss of generality that $\bar{\delta} \geq |x-\tilde{x}| >\frac{1}{n_{0}} >  0$ for some $n_{0} \in \N$. 
Let $\epsilon > 0$ and choose a sufficiently large $R$ so that $\mathbb{P}\{\tau_R \leq t\} < \epsilon$. 
For  each $n\geq n_0$, define $T_{n}$ and $T$ as in \eqref{eq-Tn-defn} and \eqref{eq-T-defn}, respectively. 

We first observe that
\begin{align}\label{P[t<T]}
\mathbb{P}\{t<T\} &= \mathbb{P}\{t<T, \tau_R < t\} + \mathbb{P}\{t<T, \tau_R \geq t\} \nonumber\\
&\leq \mathbb{P}\{\tau_R <t\} + \mathbb{P}\{t<T, \tau_R \geq t, S_{\bar{\delta}}<t\} + \mathbb{P}\{t<T, \tau_R \geq t, S_{\bar{\delta}} \geq t\} \nonumber\\
&\leq \epsilon + \mathbb{P}\{S_{\bar{\delta}} \leq t\wedge T\wedge \tau_{R}\} + \mathbb{P}\{t \leq T\wedge\tau_{R}\wedge S_{\bar{\delta}}\} \nonumber\\
&= \epsilon + \mathbb{P}\{S_{\bar{\delta}} \leq t\wedge T\wedge \tau_{R}, S_{\bar{\delta}} \leq \zeta\} + \mathbb{P}\{S_{\bar{\delta}} \leq t\wedge T\wedge \tau_{R},  S_{\bar{\delta}} > \zeta\}  \nonumber\\
& \quad \ +  \mathbb{P}\{t \leq T\wedge\tau_{R}\wedge S_{\bar{\delta}}, t<\zeta\} + \mathbb{P}\{t \leq T\wedge\tau_{R}\wedge S_{\bar{\delta}}, t \geq \zeta\} \nonumber\\
&\leq \epsilon + \mathbb{P}\{S_{\bar{\delta}} \leq t\wedge T\wedge \tau_{R}\wedge\zeta\} +  \mathbb{P}\{\zeta < S_{\bar{\delta}}\wedge t\wedge T\wedge \tau_{R}\}   \nonumber\\
& \quad \ +    \mathbb{P}\{t \leq T\wedge\tau_{R}\wedge S_{\bar{\delta}}\wedge\zeta\} + \mathbb{P}\{\zeta \leq t\} \nonumber\\
&\leq \epsilon + \mathbb{P}\{S_{\bar{\delta}} \leq t\wedge T\wedge \tau_{R}\wedge\zeta\} +  \mathbb{P}\{\zeta \leq t\}   +  \mathbb{P}\{t \leq T\wedge\tau_{R}\wedge S_{\bar{\delta}}\wedge\zeta\} + \mathbb{P}\{\zeta \leq t\} \nonumber\\
&=\epsilon + \mathbb{P}\{S_{\bar{\delta}} \leq t\wedge T\wedge \tau_{R}\wedge\zeta\} + \mathbb{P}\{t \leq T\wedge\tau_{R}\wedge S_{\bar{\delta}}\wedge\zeta\} + 2\mathbb{P}\{\zeta \leq t\} \nonumber\\
&\leq \epsilon + \mathbb{P}\{S_{\bar{\delta}} \leq T\wedge \tau_{R}\wedge\zeta\} + \frac{\mathbb{E}[T\wedge\tau_{R}\wedge S_{\bar{\delta}}\wedge\zeta]}{t} + 2\mathbb{P}\{\zeta \leq t\}.
\end{align}

Note that on the set $\{S_{\bar{\delta}} \leq T_n\wedge \tau_{R}\wedge \zeta\}$ we have $\bar{\delta} \leq |\Delta_{S_{\bar{\delta}}\wedge T_n \wedge \tau_{R}\wedge\zeta}|$.
Since $G$ is increasing, 
$0 < G(\bar{\delta}) \leq G(|\Delta_{S_{\bar{\delta}}\wedge T_n \wedge \tau_{R}\wedge\zeta}|).$
Thus
\begin{align*}
	G(\bar{\delta})\mathbb{P}\{S_{\bar{\delta}} \leq T_n\wedge \tau_{R}\wedge \zeta\} &\leq \mathbb{E}[G(|\Delta_{S_{\bar{\delta}}\wedge T_n \wedge \tau_{R}\wedge\zeta}|)1_{\{S_{\bar{\delta}} \leq T_n\wedge \tau_{R}\wedge \zeta\}}]
	\leq \mathbb{E}[G(|\Delta_{S_{\bar{\delta}}\wedge T_n \wedge \tau_{R}\wedge\zeta}|)]\\
	&= G(|x-\tilde{x}|) + \mathbb{E}\bigg[\int_{0}^{S_{\bar{\delta}}\wedge T_n \wedge \tau_{R}\wedge\zeta}\wdh \LL_{k} G(|\Delta_s|)ds\bigg]\\
	&\leq G(|x-\tilde{x}|) -\beta_{R}\mathbb{E}[T_n \wedge \tau_{R}\wedge S_{\bar{\delta}}\wedge\zeta],
\end{align*}
where the last inequality follows from (\ref{LS<beta}). So
\begin{eqnarray*}
	G(\bar{\delta})\mathbb{P}\{S_{\bar{\delta}} \leq T_n\wedge \tau_{R}\wedge \zeta\} +  \beta_{R}\mathbb{E}[T_n \wedge \tau_{R}\wedge S_{\bar{\delta}}\wedge\zeta] \leq G(|x-\tilde{x}|).
\end{eqnarray*}
Passing to the limit as  $n\rightarrow\infty$, we obtain
\begin{eqnarray*}
	G(\bar{\delta})\mathbb{P}\{S_{\bar{\delta}} \leq T\wedge \tau_{R}\wedge \zeta\} +  \beta_{R}\mathbb{E}[T \wedge \tau_{R}\wedge S_{\bar{\delta}}\wedge\zeta] \leq G(|x-\tilde{x}|).
\end{eqnarray*}
Then, in view of (\ref{P[t<T]}), we have
\begin{align*}
\mathbb{P}\{t<T\} &\leq \epsilon + \mathbb{P}\{S_{\bar{\delta}} \leq T\wedge \tau_{R}\wedge\zeta\} + \frac{\mathbb{E}[T\wedge\tau_{R}\wedge S_{\bar{\delta}}\wedge\zeta]}{t} + 2\mathbb{P}\{\zeta \leq t\} \nonumber\\
&\leq \epsilon + \frac{G(|x-\tilde{x}|)}{G(\bar{\delta})} + \frac{G(|x-\tilde{x}|)}{t\beta} + 2\mathbb{P}\{\zeta \leq t\}.
\end{align*}
From (\ref{limP[zeta<t]=0})  and the fact that $\lim_{|\tilde{x}-x|\rightarrow 0}G(|x-\tilde{x}|) = 0$, we obtain
	$ \lim_{|\tilde{x}-x|\rightarrow 0}\mathbb{P}\{t<T\} \leq \epsilon. $ 
Since $\epsilon$ was arbitrary, we obtain \eqref{limP[t<T]=0}.  
\end{proof}


Now we are ready to present the proof of  Theorem \ref{thm-str-Feller}. 
\begin{proof}[Proof of Theorem \ref{thm-str-Feller}]  Given  $x \in \mathbb{R}^d$ and $k \in \mathbb{S}$. We want to show that for every 
 $f\in \B_{b}(\mathbb{R}^d\times \ss)$,   the limit $(P_{t}f)(\tilde{x},\tilde{k})\rightarrow  (P_{t}f)(x,k)$ as $(\tilde{x},\tilde{k})\rightarrow (x,k)$ holds for all $t > 0$.  As in the proof of Theorem \ref{thm-Feller}, we only need to consider the case when $\tilde{k}=k$.  

For any given $\epsilon > 0$ we can choose a sufficiently large $R$ so that $\mathbb{P}\{\tau_R \leq t\} < \epsilon$. Let $\tilde{x} \in \mathbb{R}^d$ be such that $\bar{\delta} \geq |x-\tilde{x}| > 0$, where $\bar{\delta}:= \delta_{0}\wedge\alpha$. Denote the coupling process corresponding to the coupling operator $\wdh \A$ defined in \eqref{eq:Ahat-operator-defn} with initial condition $(x,k,\tilde{x},k)$ by $(X(t),\varLambda(t),\tilde{X}(t),\tilde{\varLambda}(t))$. Denote by
\begin{eqnarray}
\tilde{T} :=\inf\{t\geq 0: (X(t),\varLambda(t)) = (\tilde{X}(t),\tilde{\varLambda}(t))\}
\end{eqnarray}
the coupling time of $(X(t),\varLambda(t))$ and $(\tilde{X}(t),\tilde{\varLambda}(t))$. Recall the stopping time $T$ defined in \eqref{eq-T-defn}. We make the following observations: 
(i) $T \leq \tilde{T}$,  and 
(ii) $T < \zeta \text{ implies } T = \tilde{T}.$
We then have
\begin{align*} 
1_{\{t<\tilde{T}\}} &= 1_{\{t<T\}} + 1_{\{T\leq t <\tilde{T}\}} \\
& =  1_{\{t<T\}} + 1_{\{T\leq t <\tilde{T}, \zeta \leq t\}} + 1_{\{T\leq t <\tilde{T}, \zeta > t\}}      \\
&\leq 1_{\{t<T\}} + 1_{\{\zeta \leq t\}} + 1_{\{T\leq t, t < \zeta,t <\tilde{T}\}} \\
&  \leq 1_{\{t<T\}} + 1_{\{\zeta \leq t\}} + 1_{\{T < \zeta,t <\tilde{T}\}} \\
&\leq 1_{\{t<T\}} + 1_{\{\zeta \leq t\}} + 1_{\{T = \tilde{T}, t <\tilde{T}\}} \\
& \leq 1_{\{t<T\}} + 1_{\{\zeta \leq t\}} + 1_{\{t < T\}} \\
&= 2\cdot 1_{\{t<T\}} + 1_{\{\zeta \leq t\}}.
\end{align*}
Then it follows that  
\begin{align*}\label{|Pf-Pf|}
	|(P_{t}f)(\tilde{x},\tilde{k})- (P_{t}f)(x,k)| &= |\mathbb{E}[f(\tilde{X}(t),\tilde{\varLambda}(t))] - \mathbb{E}[f(X(t),\varLambda(t))]| \\
	&\leq \mathbb{E}[|f(\tilde{X}(t),\tilde{\varLambda}(t)) - f(X(t),\varLambda(t))|1_{\{t < \tilde{T}\}}]   \\ 
	 & \quad + \mathbb{E}[|f(\tilde{X}(t),\tilde{\varLambda}(t))-f(X(t),\varLambda(t))|1_{\{t \geq \tilde{T}\}}] \\
	&= \mathbb{E}[|f(\tilde{X}(t),\tilde{\varLambda}(t)) - f(X(t),\varLambda(t))|1_{\{t < \tilde{T}\}}]  \\
	&\leq 2||f||_{\infty}\mathbb{E}[1_{\{t < \tilde{T}\}}] \\
	&\leq 2||f||_{\infty}\mathbb{E}[2\cdot 1_{\{t<T\}} + 1_{\{\zeta \leq t\}}]\\
	&= 4||f||_{\infty}\mathbb{P}\{t < T\} +  2||f||_{\infty}\mathbb{P}\{\zeta \leq t\}.
\end{align*}
A combination of  (\ref{limP[zeta<t]=0})  and (\ref{limP[t<T]=0})  then gives 
\begin{eqnarray*}
\lim\limits_{|\tilde{x}-x|\rightarrow 0}|(P_{t}f)(\tilde{x},\tilde{k})- (P_{t}f)(x,k)| = 0.
\end{eqnarray*}
This establishes  the  strong Feller property and concludes the proof.\end{proof}



\section{Irreducibility}\label{sect-irr}

This section aims to establish   irreducibility  for  the process $(X,\Lambda)$. The general approach can be described as follows. We first show that  for any   $k \in  \mathbb{S}$, the process $X^{(k)}$ of \eqref{SDE X^k} is strong Feller and  irreducible. Then we use a result in \citet*{XiYZ-19} to write $P(t,(x,k),B\times\{l\})$ as a convergent series in terms of sub-transition probabilities of the killed processes $\tilde X^{(j)}, j\in \ss$ and the transition rates $q_{jl}(x)$. 

 Denote the transition probability of the process $X^{(k)}$   by
\begin{eqnarray*}
P^{(k)}(t,x,B) := \mathbb{P}\{ X^{(k)}(t) \in B | X^{(k)}(0) = x\}, \quad B \in \B(\R^{d}). 
\end{eqnarray*} The corresponding semigroup $P_{t}^{(k)}$ is said to be {\em irreducible} if $P^{(k)}(t,x,B) > 0$ for all nonempty open set $B \subset\mathbb{R}^d$.
We next kill the process $X^{(k)}$ with killing rate $q_k(\cdot)$ and denote the killed process   by $\tilde{X}^{(k)}$, that is, we define 
\begin{eqnarray*}
\tilde{X}^{(k)}(t) =
\begin{cases}
X^{(k)}(t) & \text{ if }~~ t < \tau,  \\
\partial & \text{ if } ~~ t \geq \tau, 
\end{cases}
\end{eqnarray*}
where $\tau := \inf\{t\geq 0 : \varLambda(t) \neq \varLambda(0)\}$ and $\partial$ is a cemetery point added to $\mathbb{R}^d$. Then the semigroup of the killed process $\tilde{X}^{(k)}$ is given by
\begin{displaymath}
\tilde{P}_{t}^{(k)}f(x) :=\E_{x}[f(\tilde X^{(k)}(t))] =\mathbb{E}\bigg[f(X^{(k)}(t))\exp\bigg\{\int_{0}^{t}q_{kk}(X^{(k)}(s)) ds\bigg\}\bigg| X^{(k)}(0)=x\bigg],
\end{displaymath} where $ f \in \mathfrak{B}_{b}(\mathbb{R}^d)$.
 We also denote its sub-transition probability by
 \begin{eqnarray*}
 \tilde{P}^{(k)}(t,x,B) := \tilde{P}_{t}^{(k)}1_{B}(x)=  \E_{x} [1_{B}(\tilde{X}^{(k)}(t))]=\mathbb{P}\{ \tilde{X}^{(k)}(t) \in B | \tilde{X}^{(k)}(0) = x\},  \quad B \in \B(\R^{d}).  \end{eqnarray*}

\begin{lemma}\label{X(k) Strong Feller}
Under Assumptions \ref{Assum3} and \ref{weak solution X^(k)},   the semigroup $P_{t}^{(k)}$ is strong Feller. 
\end{lemma}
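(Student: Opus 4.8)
The plan is to read Lemma~\ref{X(k) Strong Feller} as the switching-free special case of the coupling argument already carried out in Section~\ref{sect-str-Feller}. Recall that $X^{(k)}$ of \eqref{SDE X^k} is a plain jump diffusion whose generator is $\LL_{k}$ of \eqref{eq-Lk-generator}, and that $\wdh\LL_{k}$ defined in \eqref{eq-hatLk-defn} is a coupling operator for $\LL_{k}$. So for each pair $x,\tilde x\in\R^{d}$ with $0<|x-\tilde x|\le\bar\delta:=\delta_{0}\wedge\alpha$, I would first construct a non-explosive weak coupling process $(X^{(k)}(\cdot),\tilde X^{(k)}(\cdot))$ associated with $\wdh\LL_{k}$, started at $(x,\tilde x)$, whose two marginals each solve \eqref{SDE X^k} with the respective initial data and which is glued together (moves identically) after the coupling time $T$ of \eqref{eq-T-defn}. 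Its existence rests on Assumption~\ref{weak solution X^(k)} for the marginals together with the uniform ellipticity of Assumption~\ref{Assum3}(i), exactly as in the construction underlying Theorem~\ref{thm-str-Feller}. Since there is no switching component here, the time $\zeta$ plays no role.

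Next, write $\Delta_{t}=\tilde X^{(k)}(t)-X^{(k)}(t)$ and, as in Section~\ref{sect-str-Feller}, introduce $\tau_{R}$ (exit of $(X^{(k)},\tilde X^{(k)})$ from the ball of radius $R$), $S_{\bar\delta}=\inf\{t\ge0:|\Delta_{t}|>\bar\delta\}$, and $T_{n}\uparrow T$ as in \eqref{eq-Tn-defn}--\eqref{eq-T-defn}. Applying It\^o's formula to $G(|\Delta_{\cdot}|)$ and using Lemma~\ref{lem-LkG-estimate} (with $\wdh\LL_{k}G\le-\beta_{R}$ from \eqref{LS<beta}) on $\{S_{\bar\delta}\le T_{n}\wedge\tau_{R}\}$, together with $G(\bar\delta)\le G(|\Delta_{S_{\bar\delta}\wedge T_{n}\wedge\tau_{R}}|)$, yields
\[
G(\bar\delta)\,\P\{S_{\bar\delta}\le T_{n}\wedge\tau_{R}\}+\beta_{R}\,\E[T_{n}\wedge\tau_{R}\wedge S_{\bar\delta}]\le G(|x-\tilde x|);
\]
letting $n\to\infty$ replaces $T_{n}$ by $T$. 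Then, given $\epsilon>0$, choose $R$ so large that $\P\{\tau_{R}\le t\}<\epsilon$ and decompose
\[
\P\{t<T\}\le\epsilon+\P\{S_{\bar\delta}\le t\wedge T\wedge\tau_{R}\}+\P\{t\le T\wedge\tau_{R}\wedge S_{\bar\delta}\},
\]
bounding the middle term by $G(|x-\tilde x|)/G(\bar\delta)$ and, by Markov's inequality, the last term by $\E[T\wedge\tau_{R}\wedge S_{\bar\delta}]/t\le G(|x-\tilde x|)/(t\beta_{R})$. Since $G(|x-\tilde x|)\to0$ as $\tilde x\to x$ and $\epsilon$ is arbitrary, this gives $\lim_{\tilde x\to x}\P\{t<T\}=0$; this is precisely Lemma~\ref{lem-P(t<T)-0} read with $\zeta\equiv\infty$.

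Finally, for any $g\in\B_{b}(\R^{d})$, because the two marginals coincide after $T$,
\[
|P_{t}^{(k)}g(\tilde x)-P_{t}^{(k)}g(x)|=\bigl|\E[g(\tilde X^{(k)}(t))-g(X^{(k)}(t))]\bigr|\le 2\|g\|_{\infty}\,\P\{t<T\}\longrightarrow0
\]
as $\tilde x\to x$, which is the strong Feller property of $P_{t}^{(k)}$. I do not expect a genuinely new obstacle: everything after the coupling is constructed is a verbatim simplification of the bookkeeping in Lemmas \ref{lem-LkG-estimate}--\ref{lem-P(t<T)-0}, and the only point requiring care is the existence and post-$T$ gluing of the $\wdh\LL_{k}$-coupling, which relies on Assumptions~\ref{weak solution X^(k)} and \ref{Assum3}(i). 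As an alternative that bypasses even this, one may regard $X^{(k)}$ as the degenerate regime-switching jump diffusion obtained by setting $q_{kl}\equiv0$: then Assumption~\ref{assumption-Q-cont} holds trivially with $\gamma_{k}\equiv0$, Assumption~\ref{Assum3} is in force, so Theorem~\ref{thm-str-Feller} applies, and testing against $f(y,l)=g(y)\one_{\{l=k\}}$ gives the continuity of $x\mapsto P_{t}^{(k)}g(x)$ at once.
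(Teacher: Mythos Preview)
Your proposal is correct and follows essentially the same route as the paper's own proof: construct the $\wdh\LL_{k}$-coupling, repeat the computations of Lemma~\ref{lem-P(t<T)-0} with $\zeta\equiv\infty$ to get $\lim_{\tilde x\to x}\P\{t<T\}=0$, and conclude via $|P^{(k)}_{t}g(\tilde x)-P^{(k)}_{t}g(x)|\le 2\|g\|_{\infty}\P\{t<T\}$. The paper's proof is in fact a terse sketch (``using very similar calculations as those in the proof of Lemma~\ref{lem-P(t<T)-0}''), so your write-up, including the explicit decomposition of $\P\{t<T\}$ and the two bounds $G(|x-\tilde x|)/G(\bar\delta)$ and $G(|x-\tilde x|)/(t\beta_{R})$, supplies more detail than the paper itself; your alternative (viewing $X^{(k)}$ as the $q_{kl}\equiv0$ degeneration and invoking Theorem~\ref{thm-str-Feller}) is a harmless repackaging of the same argument.
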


\begin{proof} Let $(\wdt X^{(k)}, X^{(k)} )$ be the coupling process corresponding to $\wdh\LL_{k} $ of \eqref{eq-hatLk-defn} with initial condition $(\tilde x, x)$. Suppose without loss of generality that $0< |\tilde x-  x| < \delta_{0}$, where $\delta_{0}$ is the positive constant in Assumption \ref{Assum3}. Define  $T:= \inf\{t\ge0: \wdt X(t) = X(t)\}$. Using very similar calculations as those in the proof of Lemma \ref{lem-P(t<T)-0}, we can show that  $\lim_{|\tilde{x}-x|\rightarrow 0}\mathbb{P}\{t<T\} = 0.$   Then, for any $f \in {\B}_{b}(\mathbb{R}^d)$ and $t > 0$, we have
	\begin{align*}
	|(P^{(k)}_{t}f)(\tilde{x})- (P^{(k)}_{t}f)(x)| &= |\mathbb{E}[f(\tilde{X}^{(k)}(t))] - \mathbb{E}[f(X^{(k)}(t))]| 
	\leq 2||f||_{\infty}\mathbb{P}\{t<T\} \to 0,
	\end{align*} as $\tilde x -x \to 0$. 
	This implies that $P^{(k)}_{t} f$ is a continuous function and hence  completes the proof.
\end{proof}
\begin{lemma}
	Suppose  that  Assumption  \ref{Assump-linear growth} holds. Then for every $T>0$ there exists a constant $K := K(T,X(0)) > 0$ so that 
	\begin{equation}\label{E[X^2] < K}
	\mathbb{E}[|X(t)|^2] \leq K
\end{equation}
	for all $t \in [0,T]$.
\end{lemma}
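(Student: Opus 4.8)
The plan is a routine Gronwall-type a priori estimate obtained by applying It\^o's formula to the nonnegative function $x\mapsto|x|^2$. A convenient simplification is that $|x|^2$ does not depend on the switching variable, so $Q(x)|x|^2\equiv 0$ and the switching dynamics \eqref{eq:La-SDE} contribute nothing; only the jump diffusion \eqref{eq:X} is relevant.

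Since $\E[|X(t)|^2]$ is not yet known to be finite, I would first localize. For $R>0$ set $\tau_R:=\inf\{t\ge 0:|X(t)|\ge R\}$, which by the non-explosivity in Assumption \ref{Assum1} satisfies $\tau_R\uparrow\infty$ a.s. Applying It\^o's formula to $|X(t\wedge\tau_R)|^2$ along \eqref{eq:X}, and using the elementary identity $|y+c|^2-|y|^2-2\langle y,c\rangle=|c|^2$ to handle the compensated-jump term, gives
\[
|X(t\wedge\tau_R)|^2 = |X(0)|^2 + \int_0^{t\wedge\tau_R}\Psi(X(s),\vLa(s))\,ds + M(t\wedge\tau_R),
\]
where $\Psi(x,k):=2\langle x,b(x,k)\rangle+|\sigma(x,k)|^2+\int_U|c(x,k,u)|^2\nu(du)$ and $M$ is a local martingale (the sum of the stochastic integrals against $W$ and $\wdt N$). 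Taking expectations after localizing $M$ further by a reducing sequence and applying Fatou's lemma removes $M$; this is legitimate because on $[0,\tau_R]$ the process $|X|$ is bounded by $R$ while $\int_U|c(x,k,u)|^2\nu(du)\le\kappa(|x|^2+1)<\infty$ by \eqref{< |x|^2+1}. The two inequalities in \eqref{< |x|^2+1}, whose constant $\kappa$ is uniform over all regimes $k\in\ss$, give $\Psi(x,k)\le 2\kappa(|x|^2+1)$, so that with $u_R(t):=\E[|X(t\wedge\tau_R)|^2]$,
\[
u_R(t) \le |X(0)|^2 + 2\kappa\int_0^t\bigl(u_R(s)+1\bigr)\,ds,\qquad t\in[0,T].
\]

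Each $u_R$ is finite (bounded by $R^2$) and measurable, so Gronwall's inequality yields $u_R(t)\le(|X(0)|^2+2\kappa T)e^{2\kappa T}=:K$ for all $t\in[0,T]$, with $K$ independent of $R$. Letting $R\to\infty$, using $\tau_R\uparrow\infty$ a.s., and invoking Fatou's lemma once more gives $\E[|X(t)|^2]\le K$ for every $t\in[0,T]$, which is the assertion with $K=K(T,X(0))$. I do not anticipate a genuine obstacle here: this is a standard moment bound. The only points requiring mild care are that $|x|^2$ is not compactly supported, so the computation should be read as an application of It\^o's formula directly to the semimartingale $X$ (or, equivalently, with a spatial cutoff removed at the end), and the justification that the local-martingale part has zero mean after localization, which is handled by the reducing sequence and Fatou's lemma as indicated.
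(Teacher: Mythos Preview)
Your argument is correct and is precisely the ``standard argument'' the paper alludes to: the paper's own proof consists solely of the sentence ``This lemma follows from \eqref{< |x|^2+1} and standard arguments. For brevity, we omit the details here.'' Your It\^o/localization/Gronwall computation is exactly what is meant, so there is nothing to add.
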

\begin{proof} This lemma follows from  \eqref{< |x|^2+1} and standard arguments. For brevity, we omit the details here. \end{proof}


To derive   irreducibility for  the semigroup $P_{t}^{(k)}$, we consider the function $F$ given by
\begin{eqnarray}
F(r) := \int_{0}^{\frac{r}{1+r}}e^{-\int_{0}^{s}g(w)dw}ds, \qquad r \in [0,\infty), 
\end{eqnarray}
where $g$ is the function given in Assumption \ref{Assum3}(ii). Since $g \geq 0$, we see that 
\begin{align}
0 &\leq F(r) \leq \frac{r}{1+r} \leq 1 \label{0<F<1},\\
0 &\leq F'(r) = \frac{1}{(1+r)^2}e^{-\int_{0}^{\frac{r}{1+r}}g(w)dw} \leq \frac{1}{(1+r)^2} \leq 1 \label{F'>0}, \intertext{and}
0 &\geq F''(r) = -\bigg[\frac{2}{1+r} + \frac{g(\frac{r}{1+r})}{(1+r)^2}\bigg]F'(r). \label{F''<0} 
\end{align}
In addition, for any $x \in \mathbb{R}^d$, we have  
\begin{align*}
\nabla F(|x|^2) =   2F'(|x|^2)x, \ \ \ \ 
\nabla^{2}F(|x|^2) = 4F''(|x|^2)xx^T + 2F'(|x|^2)I.
\end{align*}

\begin{lemma}\label{P^K irredicible}
Under Assumptions \ref{Assum1}, \ref{Assum3} (ii),  and \ref{Assump-linear growth}, the semigroup $P_{t}^{(k)}$ is irreducible.
\end{lemma}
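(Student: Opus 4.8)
The plan is to establish irreducibility of $P_t^{(k)}$ by combining an exponential lower bound for transition densities (via strong Feller continuity and a support/controllability argument) with a Lyapunov-type estimate that prevents the process from escaping to infinity. More concretely, I would proceed in three stages.

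\emph{Stage 1: non-degeneracy on bounded regions.} Fix $t>0$, $x\in\R^d$, and a nonempty open set $B$. Using the uniform ellipticity \eqref{eq1:elliptic} together with Assumption \ref{weak solution X^(k)} and the mild continuity condition in Assumption \ref{Assum3}(ii), I would first show that for the process $X^{(k)}$ restricted to a large ball $\{|y|\le R\}$, one has $P^{(k)}(t,y,B)>0$ whenever $B\cap\{|y|\le R\}$ has positive Lebesgue measure. The cleanest route is a coupling/support argument: the strong Feller property from Lemma \ref{X(k) Strong Feller} shows $y\mapsto P^{(k)}(t,y,B)$ is continuous, so it suffices to find \emph{one} starting point $y_0$ from which $B$ is reached with positive probability. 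For a uniformly elliptic jump diffusion with the stated coefficient growth, such a point exists by a standard argument — e.g., the diffusion part alone, via the Stroock--Varadhan support theorem localized by the stopping time $\tau_R$, drives the process into any open set with positive probability — and then continuity upgrades this to all $y$ in the ball.

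\emph{Stage 2: control of excursions using the function $F$.} The obstacle in passing from bounded regions to all of $\R^d$ is that the process might leave every ball before time $t$. Here the function $F(r)=\int_0^{r/(1+r)}e^{-\int_0^s g(w)dw}ds$ enters: applying It\^o's formula to $F(|X^{(k)}(t)|^2)$ and using \eqref{< |x|^2+1}, \eqref{0<F<1}--\eqref{F''<0} together with $\nabla F(|x|^2)=2F'(|x|^2)x$ and $\nabla^2 F(|x|^2)=4F''(|x|^2)xx^T+2F'(|x|^2)I$, one obtains that $F(|X^{(k)}(\cdot)|^2)$ is a bounded supermartingale (up to an additive linear-in-$t$ term controlled by the linear growth bounds). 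Combined with the moment estimate \eqref{E[X^2] < K}, this yields $\P\{\tau_R\le t\}\to 0$ as $R\to\infty$ uniformly for $x$ in a compact set, so for $R$ large the contribution of excursions outside $\{|y|\le R\}$ is negligible. This lets me write $P^{(k)}(t,x,B)\ge \E[1_{\{\tau_R>t/2\}}P^{(k)}(t/2,X^{(k)}(t/2),B)]$ and conclude positivity by Stage 1 applied at the intermediate time.

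\emph{Stage 3: assembling the conclusion.} Putting the pieces together via the Markov property at time $t/2$: on the event $\{\tau_R>t/2\}$ the random point $X^{(k)}(t/2)$ lies in $\{|y|\le R\}$, and from there Stage 1 gives a strictly positive probability of hitting $B$ in the remaining time, while $\P\{\tau_R>t/2\}>0$ once $R$ is large by Stage 2. Hence $P^{(k)}(t,x,B)>0$, which is exactly irreducibility of $P_t^{(k)}$. I expect the main obstacle to be Stage 1 — more precisely, justifying that under only the \emph{weak} regularity of Assumption \ref{Assum3}(ii) (rather than the local Lipschitz conditions used in \citet{Qiao-14} and \citet{XiZ-19}) one still has a positive-probability controllability statement for the uniformly elliptic jump diffusion; the strong Feller property from Lemma \ref{X(k) Strong Feller} is the key new input that makes this work, since it reduces the task to exhibiting a single good starting point rather than proving a uniform density lower bound.
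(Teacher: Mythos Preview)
Your Stage 1 contains a genuine logical gap. Strong Feller gives continuity of $y\mapsto P^{(k)}(t,y,B)$, so positivity at a single $y_0$ yields positivity only in a \emph{neighborhood} of $y_0$, not on the whole ball; the step ``one good starting point $\Rightarrow$ all starting points'' is simply false without further input. To propagate positivity from that neighborhood to an arbitrary $y$ you would have to show that $X^{(k)}$ started at $y$ reaches the neighborhood with positive probability --- which is exactly the irreducibility you are trying to prove, so the argument is circular. You also invoke the Stroock--Varadhan support theorem, but that requires at least continuity of the coefficients, whereas Assumption \ref{Assum3}(ii) only supplies the integrated estimate \eqref{eq:str-Fe-coeff-cts}; and if a support theorem \emph{did} apply from every point it would give the conclusion directly, making the strong-Feller reduction superfluous. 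Your use of $F$ in Stage~2 is likewise misdirected: since $0\le F\le 1$, an inequality of the form $\E[F(|X^{(k)}(t)|^2)]\le F(|x|^2)+Ct$ carries no information about $\P\{\tau_R\le t\}$; that bound follows immediately from \eqref{E[X^2] < K} via Chebyshev, with no role for $F$.

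The paper's proof takes a completely different route --- a Girsanov argument, with no appeal to strong Feller or a support theorem. Fixing the target $a$ and terminal time $T$, one picks $t_0\in(0,T)$, builds the linear path $J^n(t)$ from the truncation $X^{(k)}_n(t_0)$ to $a$, and introduces a perturbed process $Y$ on $[t_0,T]$ solving \eqref{Y-t0-T sde} with extra drift $h^n$ chosen so that $J^n$ satisfies the associated noiseless equation. The function $F$ is applied to $|\Delta_t|^2:=|Y(t)-J^n(t)|^2$ (not to $|X^{(k)}|^2$): concavity and boundedness \eqref{0<F<1}--\eqref{F''<0}, combined with \eqref{eq:str-Fe-coeff-cts} and \eqref{< |x|^2+1}, give an estimate of the form $\E[F(|\Delta_T|^2)]\le C\bigl(\E[F(|\Delta_{t_0}|^2)]+(T-t_0)+\varepsilon\bigr)$, which is made strictly smaller than $F(r^2)F(\delta_0^2)$ by taking $n$ large and $t_0\uparrow T$. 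A Girsanov change of measure with density built from $H(t)=1_{\{t>t_0\}}\sigma^{-1}(Y(t),k)h^n(t)$ --- bounded thanks to \eqref{eq1:elliptic}, so Novikov's criterion applies --- then produces an equivalent probability $\mathbb Q$ under which $Y$ has the law of $X^{(k)}$. Equivalence of $\P$ and $\mathbb Q$ converts $\P\{|Y(T)-a|\ge r\}<1$ into $\P\{|X^{(k)}(T)-a|\ge r\}<1$. This Girsanov step is the key idea absent from your proposal.
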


\begin{rem}\label{rem-assume-3.1}
While irreducibility for jump diffusions has been considered in the literature such as  \citet*{Qiao-14,XiZ-19},  it is worth pointing out that Assumption   \ref{Assum3}(ii) is   much weaker than Assumptions (${\mathrm H_{1}'}$) and (${\mathrm H_{f}'}$) of \citet*{Qiao-14} and Assumption 2.5 of \citet*{XiZ-19}. In particular, as we mentioned in Remark \ref{rem-str-Feller},  Assumption   \ref{Assum3}(ii) allows to treat SDEs with merely H\"older continuous coefficients. The relaxations make the analyses more involved and subtle than those in the literature. Nevertherless, to preserve the flow of reading, we defer the proof of Lemma \ref{P^K irredicible} to Appendix \ref{sect-appendix}.
\end{rem}


\begin{proof}[Proof of Theorem \ref{thm-irreducibilty}] Given $t > 0$ and $(x,k) \in \mathbb{R}^d \times \mathbb{S}$. We want to show that $P(t,(x,k),B\times\{l\}) > 0$ for all $l \in \mathbb{S}$ and all $B \in \B(\mathbb{R}^d)$ with positive Lebesgue measure.   Under Assumption \ref{Assump-Q irreducible} and from Lemma \ref{X(k) Strong Feller}, as in the proof of Theorem 4.8 of \citet*{XiYZ-19},  we can write
\begin{equation}
\label{eq-P(t-xk-series}
\begin{aligned}
 &P (t,(x,k),B\times\{l\})\\
&\ = \delta_{kl}\tilde P^{(k)}(t,x,B) + \sum_{m=1}^{\infty}~~\idotsint\limits_{0<t_1<\cdots<t_m<t}
  \sum_{\substack{l_0,l_1,l_2,...,l_m \in\mathbb{S}\\l_i\neq l_{i+1},l_0=k,l_m=l}}\int\limits_{\mathbb{R}^d}\cdots\int\limits_{\mathbb{R}^d}\tilde{P}^{(l_0)}(t_1,x,dy_1)q_{l_0l_1}(y_1)\\
& \qquad \times \tilde{P}^{(l_1)}(t_2-t_1,y_1,dy_2)\cdots q_{l_{m-1}l_m}(y_m)\tilde{P}^{(l_m)}(t-t_m,y_m,B)dt_1dt_2\cdots dt_m,
\end{aligned}
\end{equation}
where $\delta_{kl}$ is the Kronecker symbol. From Assumption \ref{Assump-Q irreducible} (ii), we know that the set $\{y\in \mathbb{R}^d : q_{l_il_{i+1}}(y) > 0\}$ has positive Lebesgue measure. Then it suffices to show that $\tilde{P}^{(k)}(s,y,B) > 0$ for all $k\in\mathbb{S}$, $s>0$ and $B \in \B(\mathbb{R}^d)$. We calculate
\begin{align*}
\tilde{P}^{(k)}(s,y,B) &= \mathbb{P}\{\tilde{X}^{(k)}_{y}(s) \in B\}
 = \mathbb{E}_{k}\bigg[1_{B}(X^{(k)}_{y}(s))\exp\bigg(-\int_{0}^{s}q_{k}(X^{(k)}_{y}(r))dr\bigg) \bigg]\\
&\geq \mathbb{E}_{k}\left[1_{B}(X^{(k)}_{y}(s))e^{-M}\right]
 \geq  e^{-M}\mathbb{P}\{X^{(k)}_{y}(s) \in B\}
= e^{-M}P^{(k)}(s,y,B).
\end{align*}
From Lemma \ref{P^K irredicible}, the semigroup associated with the process $X^{(k)}$ is irreducible and therefore $P^{(k)}(s,y,B) > 0$. This completes the proof.\end{proof}

\begin{prop}\label{existence of invariant measure}
 Suppose  that  Assumptions \ref{Assum1},    \ref{assumption-non-lip}, and \ref{assumption-Q-cont} hold. In addition, assume there exist constants $\alpha, \beta > 0$, a compact subset $C \subset \mathbb{R}^d$,  a compact subset $N \subset \mathbb{S}$, a measurable function $f:\mathbb{R}^d\times\mathbb{S}\rightarrow [1,\infty)$, and a twice continuously differentiable function $V:\mathbb{R}^d\times\mathbb{S}\rightarrow [0,\infty)$ such that
\begin{equation} \label{AV(x,k) <-alpha V(x,k) + betaI}
	  \mathscr AV(x,k) \leq -\alpha f(x,k) + \beta 1_{C\times N}(x,k), \quad \forall (x,k) \in \R^{d} \times \ss.
\end{equation}
 Then the the semigroup $P_{t}$ of \eqref{eq:swjd-semigroup}   has an invariant probability measure $\pi$.
\end{prop}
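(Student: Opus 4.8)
The plan is to obtain the invariant probability measure by combining the Feller property already proved in Theorem~\ref{thm-Feller} with a Foster--Lyapunov tightness estimate extracted from \eqref{AV(x,k) <-alpha V(x,k) + betaI}, and then running the Krylov--Bogolyubov averaging argument. Since Assumptions~\ref{Assum1}, \ref{assumption-non-lip} and \ref{assumption-Q-cont} are exactly the hypotheses of Theorem~\ref{thm-Feller}, that theorem gives $P_t C_b(\R^d\times\ss)\subseteq C_b(\R^d\times\ss)$, which is all the regularity of the semigroup we will need.

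First I would derive an $L^1$-in-time bound on $f$ along the trajectory. Fix an arbitrary initial point $(x_0,k_0)\in\R^d\times\ss$ and put $\tau_n:=\inf\{t\ge0:|X(t)|\vee\vLa(t)>n\}$; by the non-explosion part of Assumption~\ref{Assum1} we have $\tau_n\uparrow\infty$ a.s. Applying It\^o's formula to $V(X(\cdot),\vLa(\cdot))$ on $[0,t\wedge\tau_n]$ (on which everything is bounded, so the local-martingale terms are genuine martingales), taking expectations, inserting \eqref{AV(x,k) <-alpha V(x,k) + betaI}, discarding the nonnegative term $\E_{x_0,k_0}[V(X(t\wedge\tau_n),\vLa(t\wedge\tau_n))]$, bounding $1_{C\times N}\le1$, and letting $n\to\infty$ by monotone convergence, I would arrive at
\begin{equation*}
\alpha\int_0^t\E_{x_0,k_0}\bigl[f(X(s),\vLa(s))\bigr]\,ds\le V(x_0,k_0)+\beta t,\qquad t\ge0.
\end{equation*}

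Next I would introduce the time-averaged occupation measures $\mu_t(A):=\tfrac1t\int_0^t\P_{x_0,k_0}\{(X(s),\vLa(s))\in A\}\,ds$, which are probability measures on $\R^d\times\ss$, and use the above bound to control their mass off the sublevel sets $K_M:=\{(x,k):f(x,k)\le M\}$: by Chebyshev's inequality, for every $t\ge1$,
\begin{equation*}
\mu_t(K_M^{c})\le\frac{1}{Mt}\int_0^t\E_{x_0,k_0}\bigl[f(X(s),\vLa(s))\bigr]\,ds\le\frac{1}{M}\Bigl(\frac{V(x_0,k_0)}{\alpha}+\frac{\beta}{\alpha}\Bigr).
\end{equation*}
Since each $K_M$ is relatively compact in $\R^d\times\ss$ ($f$ playing the role of a norm-like rate function), this shows $\{\mu_t\}_{t\ge1}$ is tight, so by Prokhorov's theorem some sequence $\mu_{t_m}$ converges weakly to a probability measure $\pi$. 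The standard Krylov--Bogolyubov computation then applies: for $g\in C_b(\R^d\times\ss)$ and $s>0$, using the semigroup identity $P_{u+s}=P_uP_s$ one has $\bigl|\int P_s g\,d\mu_{t_m}-\int g\,d\mu_{t_m}\bigr|\le 2s\|g\|_\infty/t_m\to0$; since $P_s g\in C_b$ by the Feller property, both $\int P_s g\,d\mu_{t_m}\to\int P_s g\,d\pi$ and $\int g\,d\mu_{t_m}\to\int g\,d\pi$, whence $\int P_s g\,d\pi=\int g\,d\pi$ for all $g\in C_b(\R^d\times\ss)$ and $s>0$, i.e.\ $\pi$ is a $P_t$-invariant probability measure.

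The main obstacle is the rigorous justification of the It\^o/Dynkin step in the second paragraph: $V$ is only $C^2$ and not compactly supported, so $\mathscr A V$ contains the compensated jump term $\int_U[V(x+c(x,k,u),k)-V(x,k)-\langle\nabla V(x,k),c(x,k,u)\rangle]\nu(du)$, and this must be controlled up to the localizing times $\tau_n$; the fix is to first truncate the Poisson integral, apply It\^o to the truncated process, and then remove the truncation by monotone/dominated convergence, exploiting that the solution of \eqref{eq:X} is a well-defined semimartingale (so that $\int_0^t\int_U|c(X(s-),\vLa(s-),u)|^2\nu(du)\,ds<\infty$ locally). The second point requiring care is that the purely algebraic consequence of the drift inequality, namely the displayed $L^1$-bound on $f$, yields \emph{tightness} of $\{\mu_t\}$ only because the sublevel sets of $f$ are relatively compact in $\R^d\times\ss$ --- this is precisely the sense in which $f$ has to be a genuine Lyapunov rate, and it is here that the countable-regime structure of $\ss$ matters, relative compactness of $K_M$ amounting to boundedness of its regime component together with boundedness of its $\R^d$-component.
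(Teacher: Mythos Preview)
Your Krylov--Bogolyubov strategy is the right idea and matches the spirit of the paper's argument, but there is a genuine gap in the tightness step. You assert that the sublevel sets $K_M=\{(x,k):f(x,k)\le M\}$ are relatively compact in $\R^d\times\ss$, and you even flag this in your last paragraph as ``the sense in which $f$ has to be a genuine Lyapunov rate.'' The trouble is that the proposition makes no such hypothesis on $f$: it is merely a measurable function into $[1,\infty)$, with no requirement that $f(x,k)\to\infty$ as $|x|\vee k\to\infty$. So your Chebyshev estimate $\mu_t(K_M^c)\le C/M$ is correct but does not yield tightness, and the argument collapses at exactly the point you identified as delicate.

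The fix is to discard $f$ in favour of the compact set $C\times N$, which \emph{is} explicitly given. Since $f\ge1$, the drift condition \eqref{AV(x,k) <-alpha V(x,k) + betaI} implies $\mathscr A V\le -\alpha+\beta\,1_{C\times N}$. Running your It\^o/localization step with this cruder inequality, but \emph{not} replacing $1_{C\times N}$ by $1$, gives
\[
0\le \E_{x_0,k_0}[V(X(t),\vLa(t))]\le V(x_0,k_0)-\alpha t+\beta\int_0^t P(s,(x_0,k_0),C\times N)\,ds,
\]
and hence $\liminf_{t\to\infty}\tfrac1t\int_0^t P(s,(x_0,k_0),C\times N)\,ds\ge\alpha/\beta>0$. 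This is precisely the estimate the paper obtains; from here either your Krylov--Bogolyubov computation (now with the fixed compact set $C\times N$ supplying tightness of a subsequence) or the references the paper invokes (\citet*{Foguel-69}, \citet*{Stettner-86}, \citet*{MeynT-93III}) finish the job. In short, you threw away the wrong piece of information: the role of $f$ in this proposition is only to guarantee a uniform negative drift via $f\ge1$, while compactness is carried entirely by $C\times N$.
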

\begin{proof}  Since the proof is very similar to those in   \citet*{Xi-04} or \citet*{XiZ-19}, we shall only give the sketch here. We first use \eqref{AV(x,k) <-alpha V(x,k) + betaI} and It\^o's formula to derive  
	\begin{align*}
		\alpha t \leq V(x,k) + \beta\int_{0}^{t}P(s,(x,k),C\times N)ds, \qquad \forall t > 0,
	\end{align*}
which, in turn,   implies that
	\begin{align}\label{alpha/beta < liminf 1/t P(s,(x,k),CN)}
  \liminf\limits_{t\longrightarrow\infty}\frac{1}{t}\int_{0}^{t}P(s,(x,k),C\times N)ds \ge 	\frac{\alpha }{\beta} > 0. 
	\end{align}
We have shown in Theorem \ref{thm-Feller}  that   the process $(X,\vLa)$ is Feller under Assumptions  \ref{Assum1}--\ref{assumption-Q-cont}. Then, in view of    \citet*{Foguel-69} and \citet*{Stettner-86} (see also the proof of Theorem 4.5 of \citet*{MeynT-93III}), \eqref{alpha/beta < liminf 1/t P(s,(x,k),CN)} implies that an invariant measure $\pi$ exists. 
\end{proof}

\begin{prop}\label{prop-existence-uniqueness-invariant measure}
Suppose   that Assumptions \ref{Assum1},     \ref{assumption-Q-cont}, \ref{Assum3}, \ref{weak solution X^(k)}, \ref{Assump-linear growth},  and  \ref{Assump-Q irreducible} 
hold. If there exists  a twice continuously differentiable function $V:\mathbb{R}^d\times\mathbb{S}\rightarrow [0,\infty)$ such that \eqref{AV(x,k) <-alpha V(x,k) + betaI} holds, then the the semigroup $P_{t}$ of \eqref{eq:swjd-semigroup} has a unique invariant measure.
\end{prop}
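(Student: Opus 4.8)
The plan is to split the statement into an \emph{existence} part and a \emph{uniqueness} part. Existence of an invariant probability measure $\pi$ will be obtained from the Lyapunov condition \eqref{AV(x,k) <-alpha V(x,k) + betaI} by repeating the argument of Proposition~\ref{existence of invariant measure}; the point to be careful about is that Proposition~\ref{existence of invariant measure} invokes the Feller property via Assumption~\ref{assumption-non-lip}, which is \emph{not} among the present hypotheses, so the Feller property must instead be extracted from the strong Feller property of Theorem~\ref{thm-str-Feller}. Uniqueness will then follow from a Doob--Khasminskii type argument that combines the strong Feller property (Theorem~\ref{thm-str-Feller}) with the irreducibility of the semigroup (Theorem~\ref{thm-irreducibilty}).

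\emph{Existence.} Since Assumptions~\ref{assumption-Q-cont} and~\ref{Assum3} are in force, Theorem~\ref{thm-str-Feller} gives $P_{t}f\in C_{b}(\R^{d}\times\ss)$ for all $f\in\B_{b}(\R^{d}\times\ss)$ and $t>0$. Restricting to $f\in C_{b}(\R^{d}\times\ss)$ and using the right continuity of the sample paths of $(X,\vLa)$ together with the bounded convergence theorem to obtain $\lim_{t\downarrow0}P_{t}f(x,k)=f(x,k)$, we conclude that $P_{t}$ is Feller. One then runs the computation in the proof of Proposition~\ref{existence of invariant measure} without change: applying It\^o's formula to $V$ and using \eqref{AV(x,k) <-alpha V(x,k) + betaI} yields $\alpha t\le V(x,k)+\beta\int_{0}^{t}P(s,(x,k),C\times N)\,ds$ for all $t>0$, hence $\liminf_{t\to\infty}\frac{1}{t}\int_{0}^{t}P(s,(x,k),C\times N)\,ds\ge\alpha/\beta>0$, and the Feller property together with \citet*{Foguel-69,Stettner-86} (see also the proof of Theorem~4.5 of \citet*{MeynT-93III}) produces an invariant probability measure $\pi$ for $(P_{t})_{t\ge0}$.

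\emph{Uniqueness.} Regard $\R^{d}\times\ss$ as a locally compact Polish space under the metric $d((x,i),(y,j))=|x-y|+1_{\{i\ne j\}}$. By Theorem~\ref{thm-str-Feller} the semigroup $P_{t}$ is strong Feller, and by Theorem~\ref{thm-irreducibilty} it is irreducible, all of whose hypotheses hold here, since Assumption~\ref{Assum3}(ii) is part of Assumption~\ref{Assum3} and Assumptions~\ref{weak solution X^(k)}, \ref{Assump-linear growth}, and~\ref{Assump-Q irreducible} are assumed. By Doob's theorem (see, e.g., \citet*{Hairer-16} or \citet*{Cerrai-01}; see also the argument in \citet*{XiZ-19}), a strong Feller and irreducible Markov semigroup admits at most one invariant probability measure: applying these two properties to the time-one kernel $P_{1}$ shows that the measures $P(1,(x,k),\cdot)$, $(x,k)\in\R^{d}\times\ss$, are mutually equivalent, so any two invariant probability measures are equivalent and hence coincide. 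Since every invariant measure of $(P_{t})_{t\ge0}$ is in particular $P_{1}$-invariant, $\pi$ is the unique invariant measure of $P_{t}$, which completes the proof.

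The substantive issues requiring care, rather than a single main obstacle, are bookkeeping ones: upgrading the strong Feller property of Theorem~\ref{thm-str-Feller}, which is stated only for $t>0$, to the full Feller property used in the existence step (this rests on right continuity of the paths and would be stated explicitly), and verifying that the topological irreducibility delivered by Theorem~\ref{thm-irreducibilty}, namely $P(t,(x,k),B\times\{l\})>0$ for every nonempty open $B\times\{l\}$, is precisely the form of irreducibility under which Doob's dichotomy forbids two invariant measures from being mutually singular. Both are routine once the state space is recognized as a locally compact Polish space.
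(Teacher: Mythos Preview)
Your proof is correct and follows essentially the same two-step approach as the paper: existence via the Lyapunov bound and Feller property, uniqueness via strong Feller plus irreducibility and the Doob--Khasminskii argument (the paper likewise cites \citet*{Cerrai-01} and \citet*{Hairer-16}).

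One point worth noting: your observation that Proposition~\ref{existence of invariant measure} cannot be invoked \emph{verbatim}, because Assumption~\ref{assumption-non-lip} is absent from the hypotheses of Proposition~\ref{prop-existence-uniqueness-invariant measure}, is well taken. The paper's proof simply writes ``existence follows directly from Proposition~\ref{existence of invariant measure}'' without addressing this, whereas you correctly supply the missing link by deriving the Feller property from the strong Feller property of Theorem~\ref{thm-str-Feller} (which needs only Assumptions~\ref{assumption-Q-cont} and~\ref{Assum3}, both present here) together with right continuity of paths for the $t\downarrow0$ limit. So your version is in fact slightly more careful than the paper's on the existence side; otherwise the two arguments coincide.
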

\begin{proof}
The existence of an invariant measure follows directly from Proposition \ref{existence of invariant measure}.  For the uniqueness, we note that $P_{t}$ is strong Feller   and irreducible by Theorems \ref{thm-str-Feller} and \ref{thm-irreducibilty}, respectively. Then by   \citet*{Cerrai-01} and also \citet*{Hairer-16},   $P_t$   can admit at most one invariant measure. This completes the proof.
\end{proof}

\section{Examples}\label{sect-exms}
\begin{example}\label{ex1}
Consider the following SDE
\begin{equation} dX(t) = b(X(t),\Lambda(t))dt + \sigma(X(t),\varLambda(t))dW(t) + \int_{U}c(X(t^{-}),\varLambda(t^{-}),u)\tilde{N}(dt,du), 
\label{Eq Expo Example}
\end{equation}	
 with initial condition $X(0)= x \in \mathbb{R}$, where $W$ is a standard $1$-dimensional Brownian motion, $\tilde{N}$ is the associated compensated Poisson random measure on $[0,\infty)\times U$ with intensity $dt\nu(du)$ in which $U = \{u\in \mathbb{R} : 0 < |u| < 1\}$ and $\nu(du):= \frac{du}{|u|^{2}}$. Note that $\nu$ is a $\sigma$-finite measure on $U$ with $\nu(U) =\infty$. The component $\varLambda$ is the continuous-time stochastic process taking values in $\mathbb{S}=\{1,2,\dots\}$ generated by $Q(x) = (q_{kl}(x))$ where   $$q_{kl}(x) =\begin{cases}
    \frac{k}{3^{l+k}}\frac{1}{(1+l|x|^2)}   & \text{if }  k\neq l\\
     -\sum_{l\neq k}q_{kl}(x) & \text{otherwise}.
\end{cases}$$  
Furthermore, suppose the coefficients of \eqref{Eq Expo Example} are given by
\begin{align*}
		\sigma(x,k) = x^{\frac23} + 1,\quad 
		b(x,k) = -\frac{x}{2k^2}, \quad 
			c(x,k,u)  =  \frac{ux }{\sqrt 2k},  \quad\text{ for } (x,k) \in \R\times \ss \text{ and } u \in U.
	\end{align*} 

We make the following observations. \begin{enumerate}
  \item[(i)] Assumption \ref{Assum1} is satisfied. Indeed, one can verify directly that  the coefficients of \eqref{Eq Expo Example} satisfy the linear growth condition and Assumption 2.2 of \citet*{XiYZ-19}.  By Theorem 2.5 of \citet*{XiYZ-19}, \eqref{Eq Expo Example} has a unique strong  non-explosive solution.  This, of course, implies Assumption \ref{Assum1}. In addition, Assumption \ref{Assump-linear growth} holds. 
  {Indeed, for any $k \in \mathbb{S}$ and $x\in \mathbb{R}$, we have $2 \langle  x, b(x,k)\rangle = -\frac{x^{2}}{2k^{2}} $ and 
   	\begin{align*}
   		  |\sigma(x,k)|^2 + \int_{U}|c(x,k,u)|^2\nu(du)
   		& =   ( x^{\frac23} + 1)^2 + \frac{|x|^2}{2k^2} \int_{U}|u|^2\nu(du)\\
   		&=    ( x^{\frac23} + 1)^2 + \frac{|x|^2}{k^2}\\
   		&\leq 4[|x|^2 + 1].
   	\end{align*}
   Hence \eqref{< |x|^2+1} holds with $\kappa = 4$.  Since $a(x,k) = \sigma^{2}(x,k) =  ( x^{\frac23} + 1)^2 \ge 1$,   \eqref{eq1:elliptic} holds with  $\lambda=1$. }
 
   \item[(ii)] It is clear that Assumption \ref{assumption-non-lip} (i) holds true. Next we verify  Assumption \ref{assumption-Q-cont}. To this end, we compute
\begin{align*}
	\sum_{l\in \mathbb{S}\backslash\{k\}}|q_{kl}(x)-q_{kl}(y)| &= \sum_{l\in \mathbb{S}\backslash\{k\}}\bigg|\frac{k}{3^{l+k}}\frac{1}{(1+l|x|^2)}-\frac{k}{3^{l+k}}\frac{1}{(1+l|y|^2)}\bigg|\\
	&= \frac{k}{3^k}\sum_{l\in \mathbb{S}\backslash\{k\}}\frac{1}{3^l}\bigg|\frac{1}{1+l|x|^2}-\frac{1}{1+l|y|^2}\bigg|  \\
	&\leq   \sum_{l\in \mathbb{S}}\frac{l}{3^l}\frac{||y|^2 - |x|^2|}{(1+l|x|^2)(1+l|y|^2)} \\
	&=  \sum_{l\in \mathbb{S}}\frac{l}{3^l}\frac{(|y|+|x|)||y|-|x|| }{(1+l|x|^2)(1+l|y|^2)}  \\
	& \le  \sum_{l\in \mathbb{S}}\frac{l}{3^l} |y-x| 
	 = \frac34 |x-y|, 
\end{align*} where the   last inequality follows from  the triangle inequality $||y|-|x|| \le |x-y|$ and the observation that \begin{align*} 
 \frac{|y|+|x|} {(1+l|x|^2)(1+l|y|^2)} & \le \frac{|y|}{ 1+l|y|^2} + \frac{|x|}{ 1+l|x|^2} \le \frac{|y|}{ 1+|y|^2} + \frac{|x|}{ 1+|x|^2} \le \frac12+\frac12 =1.   
\end{align*}
  \item[(iii)]  We can further verify that Assumption \ref{Assum3} holds. Indeed,  since $a(x,k) = \sigma^{2}(x,k)  = x^{\frac43} + 2x^{\frac23} + 1$,  for each $R > 0$, we can take $\lambda_{R} =1$ and $\sigma_{\lambda_{R}}(x,k) = (x^{\frac43} + 2x^{\frac23})^{\frac12}$ for all $(x,k) \in \R\times \ss$. Then it is straightforward to verify that  for all $x,z\in \R$ with $|x| \vee |z| \le R$ and $k\in \ss$ \begin{align*}
	&   |\sigma_{\lambda_{R}}(x,k) - \sigma_{\lambda_{R}}(z,k)|^2 + 2\langle x-z,b(x,k)-b(z,k)\rangle + \int_{U}|c(x,k,u)-c(z,k,u)|^2\nu(du) \\ 
	  &\  \le 2 (z^{\frac23} + x^{\frac23} + 2)(z^{\frac23} - x^{\frac23})-\frac{1}{k^2}|x-z|^2 +\frac{1}{k^2}|x-z|^2\\
		&\ \leq 4 ( R^{ \frac23}+1)|x-z|^{\frac23}  \\
		&\ = 4 ( R^{ \frac23}+1) |x-z| g( |x-z|),
\end{align*} where $g(r) = r^{-\frac13}$. Note that the function $g$ satisfies \eqref{g-integrable-01}. As a result, \eqref{Eq Expo Example}  is  strong Feller continuous  by Theorem \ref{thm-str-Feller}.  
\item[(iv)] Next we see immediately  that Assumptions \ref{Assump-Q irreducible}   holds and hence \eqref{Eq Expo Example} is irreducible by virtue of Theorem \ref{thm-irreducibilty}.  
\end{enumerate} 
\end{example}

\begin{example}\label{ex2}
	Consider the following SDE
	\begin{equation}\label{Eq2 Expo Example} 
	\begin{aligned}
	dX(t) & = b(X(t),\Lambda(t))dt + \sigma(X(t),\varLambda(t))dW(t) + \int_{U}c(X(t^{-}),\varLambda(t^{-}),u)\tilde{N}(dt,du), \\ X(0) & = x \in \mathbb{R}^2,
	\end{aligned}\end{equation}	
	where $W$ is a standard $2$-dimensional Brownian motion, $\tilde{N}$ is the associated compensated Poisson random measure on $[0,\infty)\times U$ with intensity $dt\nu(du)$ in which $U = \{u\in \mathbb{R}^2 : 0 < |u| < 1\}$ and $\nu(du):= \frac{du}{|u|^{2+\delta}}$ for some $\delta \in (0,2)$. The component $\varLambda$ is the continuous-time stochastic process taking values in $\mathbb{S}=\{1,2,\dots\}$ generated by $Q(x) = (q_{kl}(x))$ with
	$q_{kl}(x) =  \frac{2+\cos(k|x|)}{3^{l}(2+\sin(|x|^2))}$ for $x\in \mathbb{R}^2$ and $k\neq l \in \mathbb{S}$ and $q_{k}(x) = -q_{kk}(x) = \sum_{l\neq k}q_{kl}(x)$.  The coefficients of \eqref{Eq2 Expo Example} are given by
	\begin{align*}
		\sigma(x,k) = \frac{|x| + 1}{4} I,\quad 
		b(x,k) = -\frac{k}{k+1}x,\quad 
		c(x,k,u)  = \frac{\sqrt k}{\sqrt{k+1}} \gamma |u|x	\end{align*}
	where $I$ is the 2-dimensional identity matrix and $\gamma$ is a positive constant so that $\gamma^2\int_{U}|u|^2\nu(du) = 1$.
	
	Detailed calculations as those in Example \ref{ex1} reveal that \eqref{Eq2 Expo Example} has a unique non-explosive weak solution, which is strong Feller continuous and irreducible. Next we verify that   $V(x,k):=  |x|^2 +k$ satisfies (\ref{AV(x,k) <-alpha V(x,k) + betaI}) and hence by Proposition \ref{prop-existence-uniqueness-invariant measure},  \eqref{Eq2 Expo Example} has a unique invariant measure.
	
	Observe that $\nabla V(x,k) = 2x$ and $\nabla^{2}V(x,k) = 2I$.  We compute
		\begin{align*}
		\A V(x,k) 
		&:= \frac{1}{2}\tr\left(a(x,k)\nabla^{2}V(x,k)\right) + \langle b(x,k),\nabla V(x,k)\rangle  + \sum_{l\in\mathbb{S}}q_{kl}(x)\left[V(x,l) - V(x,k)\right]\\
		& \quad \ + \int_{U}\left(V(x+c(x,k,u), k) - V(x,k) -\langle\nabla V(x,k), c(x,k,u)\rangle\right)\nu(du)\\
		&\le  \frac{1}{2}\tr\left(\frac{(|x|+1)^2}{16} 2I\right) -\frac{k}{k+1} \langle x,2x\rangle  + \sum_{l\neq k}  \frac{(2+\cos(k|x|)) (l-k)}{3^{l}(2+\sin(|x|^2))}  \\ & \quad\  + \frac{k}{k+1}\int_{U} \gamma^{2}  |x|^{2} |u|^{2}\nu(du)\\
		&= \frac{(|x|+1)^2}{8}   -\frac{2k}{k+1} |x|^2 + \frac{k}{k+1} |x|^{2} +  \frac{2+\cos(k|x|)}{2+\sin(|x|^2)}\sum_{l\neq k}\frac{l-k}{3^{l}}  \\
		& \le  \frac{|x|^{2} + 1}{4} -  \frac{k}{k+1} |x|^{2} +  \frac{2+\cos(k|x|)}{2+\sin(|x|^2)}\bigg( \frac34 - \frac k2\bigg)\\
		& \le -\frac14|x|^{2} + \frac{5}{2} - \frac{k}{6} \\
		& \le -\frac16 V(x,k) + \frac{5}{2},
		\end{align*} for all $(x,k) \in \R^{2}\times \ss$.  
		Since $V(x,k) \rightarrow \infty$ as $|x|\vee k \rightarrow \infty$, this apparently implies  (\ref{AV(x,k) <-alpha V(x,k) + betaI}) and hence a unique invariant probability measure $\pi$  for  \eqref{Eq2 Expo Example} exists.
\end{example}		

\noindent{\bf Acknowledgement.} The authors would like to thank the Associate Editor and the anonymous reviewers for their helpful comments and suggestions. 

\appendix\section{Proofs of Several Technical Results}\label{sect-appendix}
\begin{proof}[Proof of Lemma \ref{lem-24}]  
We will prove the lemma separately for the cases $d=1$ and $d \ge 2$.

{\em Case {\em (i)}: $d=1$.} Let $\{a_{n}\}$ be a strictly decreasing sequence of real numbers satisfying $a_{0}=1$,  $\lim_{n\to\infty}a_{n} =0$, and $\int_{a_{n}}^{a_{n-1}} \frac{\d r}{r} = n$ for each $n \ge 1$.
For each $n \ge 1$, let $\rho_{n}$ be a nonnegative continuous function with support on $(a_{n}, a_{n-1})$ so that
\begin{displaymath}
\int_{a_{n}}^{a_{n-1}}\rho_{n}(r) \d r =1 \text{ and } \rho_{n}(r) \le 2(nr)^{-1} \text{ for all }r > 0.
\end{displaymath}
For $x\in \R$, define \begin{equation}
\label{eq-fn psi-n}
\psi_{n}(x) = \int_{0}^{|x|} \int_{0}^{y}\rho_{n}(z) \d z\d y.
\end{equation} We can immediately verify that $\psi_{n}$ is even and twice continuously differentiable, with
\begin{equation}\label{eq sgn of psi'}
\psi_{n}'(r) =\sgn(r) \int_{0}^{|r|} \rho_{n}(z) \d z =\sgn(r) |\psi_{n}'(r)|,
\end{equation} and \begin{equation}
\label{eq-psi estimates}
 |\psi_{n}'(r)| \le 1,\quad  0 \le |r| \psi_{n}''(r) = |r| \rho_{n}(|r|) \le \frac2n,\quad  \text{and}\quad\lim_{n\to\infty} \psi_{n}(r) = |r|
\end{equation} for $r\in \R$. Furthermore, for each $r > 0$, the sequence $\{\psi_{n}(r) \}_{n\ge 1}$ is nondecreasing.  For each $n\in \N$, one can show that  $\psi_{n}$, $\psi_{n}'$, and $\psi_{n}''$ all vanish  on the interval $(-a_{n}, a_{n})$.
Moreover the classical arguments using Assumption \ref{assumption-non-lip} (i), \eqref{eq sgn of psi'}   and \eqref{eq-psi estimates}  reveal that
\begin{align}\label{eq-Lk psi-n estimate}
\nonumber \wdt \LL_{k} \psi_{n} (x-z) & =\frac12  \psi_{n}'' (x-z) |\sigma(x,k)-\sigma(z,k)|^{2} +\psi_{n}'(x-z) (b(x,k)- b(z,k))\\  &\ \quad  + \nonumber\int_{U}  [\psi_{n}(x-z + c(x,k,u)- c(z,k,u)) \\ \nonumber& \qquad \qquad \qquad 
- \psi_{n}(x-z)- \psi_{n}' (x-z) ( c(x,k,u)- c(z,k,u))]\nu(\d u)
\\ & \le   K \frac{\kappa_{R}}{n} +  \kappa_{R} \rho(|x-z|), \end{align}
   for all $x,z$   with  $|x| \vee |z| \le R$
 and  $0<  |x-z|  \le \delta_{0}$, where $K $ is a positive constant independent of $R$ and $n$.  Then it follows that 
  \begin{align*}
\E [  \psi_{n}(\Delta_{t\wedge S_{\delta_{0}} \wedge\tau_{R}\wedge\zeta})] & = \E[\psi_{n} (\tilde X(t\wedge S_{\delta_{0}} \wedge\tau_{R}\wedge\zeta)-X(t\wedge S_{\delta_{0}} \wedge\tau_{R}\wedge\zeta))]\\ 
 & = \psi_{n}(\tilde x-x) + \E\biggl[\int_{0}^{t\wedge \tau_{R} \wedge S_{\delta_{0}}\wedge\zeta}\wdt\LL_{k} \psi_{n}(\tilde X(s) - X(s))\d s\bigg]\\
    & \le  \psi_{n}(|\Delta _{0} |) + \E\biggl[\int_{0}^{t\wedge \tau_{R} \wedge S_{\delta_{0}}\wedge\zeta} \bigg( \kappa_{R} \rho (|\Delta_{s}|) + K \frac{\kappa_{R}}{n}\bigg) \d s\bigg]\\
    & \le  \psi_{n}(|\Delta _{0} |) +K \frac{\kappa_{R}}{n}t + \kappa_{R}  \int_{0}^{t} \rho\big(\E[|\Delta_{s\wedge \tau_{R} \wedge S_{\delta_{0}}\wedge\zeta }|]\big)\d s,
\end{align*} where the first inequality follows from \eqref{eq-Lk psi-n estimate} and the second inequality follows from the concavity of $\rho$ and Jensen's inequality. 
Then we use  the monotone convergence theorem and \eqref{eq-psi estimates}  to derive \begin{align*}
	\mathbb{E}[|\Delta_{t\wedge\tau_{R}\wedge S_{\delta_{0}}\wedge\zeta}|] \leq |\Delta_{0}| +  \kappa_{R}\int_{0}^{t}\rho(\mathbb{E}[|\Delta_{s\wedge\tau_{R}\wedge S_{\delta_{0}}\wedge\zeta}|])ds.
\end{align*}

Let $u(t):=\mathbb{E}[|\Delta_{t\wedge\tau_{R}\wedge S_{\delta_{0}}\wedge\zeta}|]$. Then $u$ satisfies
\begin{eqnarray*}
	0 \leq u(t) \leq v(t) := |\Delta_{0}| +  \kappa_{R}\int_{0}^{t}\rho(u(s))ds.
\end{eqnarray*}
Define the function $\Gamma(r):= \int_{1}^{r}\frac{ds}{\rho(s)}$ for $r>0$. Thanks to (\ref{rhoProperties1}), we can verify that $\Gamma$ is nondecreasing and satisfies $\Gamma(r) > -\infty$ for all $r>0$ and $\lim_{r\rightarrow0}\Gamma(r) = -\infty$. Then we have
\begin{align*}
	\Gamma(u(t)) &\leq \Gamma(v(t))
	= \Gamma(|\Delta_{0}|) + \int_{0}^{t}\Gamma'(v(s))v'(s)ds= \Gamma(|\Delta_{0}|) + \kappa_{R}\int_{0}^{t}\frac{\rho(u(s))}{\rho(v(s))}ds\\
	&
	\leq  \Gamma(|\Delta_{0}|) + \kappa_{R}\int_{0}^{t}1ds
	=  \Gamma(|\Delta_{0}|) + \kappa_{R}t,
\end{align*}
where we use the assumption that $\rho$ is nondecreasing to obtain the last inequality. Taking the limit $|\Delta_{0}| = |\tilde{x}-x|\to  0$ we have $\Gamma(u(t)) \to  -\infty$ since $\lim_{r\rightarrow0}\Gamma(r) = -\infty$. Moreover, since $\Gamma(r) > -\infty$ for all $r>0$ we must have $\lim_{|\tilde{x}-x|\to  0}u(t)=0 $. This gives  \eqref{e1:EDd-=0} as desired. 

{\em Case {\em (ii)} $d\ge 2$.}  Consider the function $f(x,z) : = |x-z|^{2}$. Then Assumption \ref{assumption-non-lip} (ii) implies that 
\begin{align*}
\wdt \LL_{k} f(x,z)&  = 2\langle x-z,  b(x,k)-b(z,k)\rangle   + |\sigma(x,k)-\sigma(z,k)|^2  + \int_{U}|c(x,k,u)-c(z,k,u)|^2\nu(du)\\ & \leq \kappa_{R} \rho(|x-z|^2),
\end{align*} for all $x, z \in \mathbb{R}^d$ with $|x|\vee |z| \leq R$ and $|x-z|\leq \delta_{0}$. Consequently 
\begin{align*} 
 \E\big[|\Delta_{t\wedge\tau_{R}\wedge S_{\delta_{0}}\wedge\zeta}|^{2}\big]  & = \E[f(\tilde X(t\wedge\tau_{R}\wedge S_{\delta_{0}}\wedge\zeta), X(t\wedge\tau_{R}\wedge S_{\delta_{0}}\wedge\zeta))]   \\
  & = f(\tilde x, x) + \E\bigg[\int_{0}^{t\wedge\tau_{R}\wedge S_{\delta_{0}}\wedge\zeta}\wdt\LL_{k} f(\tilde X(s), X(s))ds \bigg] \\
   & \le |\Delta_{0}| + \E\bigg[\int_{0}^{t\wedge\tau_{R}\wedge S_{\delta_{0}}\wedge\zeta}\kappa_{R} \rho(|\tilde X(s)- X(s)|^2)ds \bigg]\\
   & \le  |\Delta_{0}| + \kappa_{R} \int_{0}^{t} \rho(\E[|\Delta_{s\wedge\tau_{R}\wedge S_{\delta_{0}}\wedge\zeta}|^{2}])ds,
\end{align*}where the last inequality follows from   the concavity of $\rho$ and Jensen's inequality. Using the same argument as that in Case (i), we can show that $\lim_{|\tilde{x}-x|\to  0}\E[|\Delta_{t\wedge\tau_{R}\wedge S_{\delta_{0}}\wedge\zeta}|^{2}] =0$; which, together with H\"older's inequality, leads to \eqref{e1:EDd-=0}. Combining the two cases completes the proof.
\end{proof}

\begin{proof}[Proof of Lemma \ref{lem-LkG-estimate}] Let use first prove the lemma for the case when $d \ge 2$.  In view of (\ref{Omega_d}), it follows from (\ref{S'>0, S''<0}) that
\begin{align}\label{Sdrift}
&  \wdh{\varOmega}_{\mathrm d}^{(k)}G(|x-z|) \nonumber\\
 &\ \ = \frac{G''(|x-z|)}{2}\bar{A}(x,k,z,k) + \frac{G'(|x-z|)}{2|x-z|}[\tr A(x,k,z,k) - \bar{A}(x,k,z,k) +2B(x,k,z,k)] \nonumber\\
&\ \ \leq \frac{G''(|x-z|)}{2}4\lambda_{R} + \frac{G'(|x-z|)}{2|x-z|}[|\sigma_{\lambda_{R}}(x,k)-\sigma_{\lambda_{R}}(z,k)|^2 +2B(x,k,z,k)] \nonumber\\
&\ \ = 2\lambda_{R}\left(-1 - \frac{\kappa_{R}}{2\lambda_{R}}g(|x-z|)G'(|x-z|)\right) \nonumber\\ 
 & \ \ \qquad  + \frac{G'(|x-z|)}{2|x-z|}[|\sigma_{\lambda_{R}}(x,k)-\sigma_{\lambda_{R}}(z,k)|^2 +2B(x,k,z,k)] \nonumber\\
&\ \ = -2\lambda_{R} + \left(-\kappa_{R}g(|x-z|) + \frac{|\sigma_{\lambda_{R}}(x,k)-\sigma_{\lambda_{R}}(z,k)|^2 +2B(x,k,z,k)}{2|x-z|}\right)G'(|x-z|).
\end{align}
Since the function $G$ is concave, we have $G(r_1) - G(r_0) \leq G'(r_0)(r_1 - r_0)$ for all $r_0, r_1 \geq 0$. Take $r_0 = |x-z|$ and $r_1 = |x+c(x,k,u)- z-c(z,k,u)|$  
 to obtain \begin{align*}
  & G(|x+c(x,k,u)- z-c(z,k,u)|) - G(|x-z|) -\frac{G'(|x-z|)}{|x-z|}\langle x-z, c(x,k,u) -c(z,k,u)\rangle \\
&\leq G'(|x-z|)\left(|x+c(x,k,u)- z-c(z,k,u)| - |x-z| - \frac{\langle x-z, c(x,k,u) -c(z,k,u)\rangle}{|x-z|}\right).
 \end{align*} 
Furthermore, with $a: = x-z$ and $b: = c(x,k,u)-c(z,k,u)$, we can verify directly that  \begin{displaymath}
|a+b| -|a| -\frac{\langle a,b\rangle}{|a|} = \frac{-(|a+b| -|a|)^{2} + |b|^{2}}{2|a|} \le \frac{|b|^{2}}{2|a|}.
\end{displaymath}
 Hence it follows that 
\begin{align*}
G&(|x+c(x,k,u)-z-c(z,k,u)|) - G(|x-z|) -\frac{G'(|x-z|)}{|x-z|}\langle x-z, c(x,k,u) -c(z,k,u)\rangle \\
&\leq  \left( \frac{|c(x,k,u)- c(z,k,u)|^2}{2|x-z|}\right)G'(|x-z|).
\end{align*}
Then we have \begin{equation}
 \label{Sjump}
 \wdt{\varOmega}_{\mathrm j}^{(k)}G(|x-z|) 
  \le G'(|x-z|) \int_{U}     \frac{|c(x,k,u)- c(z,k,u)|^2}{2|x-z|} \nu(du). 
\end{equation}
From (\ref{Sdrift}) and (\ref{Sjump}), we see that
\begin{align*}
\wdh \LL_{k} G(|x-z|) 
  & =  [\wdh{\varOmega}_{\mathrm d}^{(k)} + \wdt{\varOmega}_{\mathrm j}^{(k)}]G(|x-z|)\\
 & \le -2\lambda_{R} + G'(|x-z|)\bigg(-\kappa_{R}g(|x-z|) + \frac{|\sigma_{\lambda_{R}}(x,i)-\sigma_{\lambda_{R}}(z,j)|^2 + 2B(x,k,z,k)}{2|x-z|}\\
 &   \qquad \qquad\qquad\qquad\qquad\quad+ \int_{U}\frac{|c(x,k,u)- c(z,k,u)|^2}{2|x-z|}\nu(du)\bigg)\\
 & \leq  -2\lambda_{R}.
\end{align*}	
This gives \eqref{LS<beta} when $d \ge 2$. 

Finally we look at the case when $d=1$. First we notice that 
\begin{displaymath}
    \bar{A}(x,i,z,j) = \tr A(x,i,z,j)= (\sigma_{\lambda_{R}}(x,i) - \sigma_{\lambda_{R}}(z,j))^2 + 4\lambda_{R}.                
\end{displaymath} Using this observation  in \eqref{Sdrift} gives us
 \begin{align} \label{eq:1d-strFe-computation}
&  \wdh{\varOmega}_{\mathrm d}^{(k)}G(|x-z|) \nonumber\\
 &\ \ = \frac{G''(|x-z|)}{2}\bar{A}(x,k,z,k) + \frac{G'(|x-z|)}{2|x-z|}[\tr A(x,k,z,k) - \bar{A}(x,k,z,k) +2B(x,k,z,k)] \nonumber\\
&\ \ \le -2\lambda_{R} + \left(-\kappa_{R}g(|x-z|) + \frac{ 2B(x,k,z,k)}{2|x-z|}\right)G'(|x-z|).
\end{align} The estimation for $\wdt{\varOmega}_{\mathrm j}^{(k)}G(|x-z|)$ is the same as before.    It then follows from  \eqref{Sjump},  \eqref{eq:1d-strFe-computation},  and \eqref{eq:1d-str-Fe-coeff-cts} that \begin{align*} 
   \wdh \LL_{k} G(|x-z|)&  \le   -2\lambda_{R} \\ &\ \ + G'(|x-z|)\bigg(-\kappa_{R}g(|x-z|) + \frac{ 2B(x,k,z,k) +\int_{U} |c(x,k,u)- c(z,k,u)|^2 \nu(du) }{2|x-z|}\bigg)\\& \le -2\lambda_{R},\end{align*} again establishing \eqref{LS<beta} for the case when $d =1$.  The proof is complete. 
 \end{proof}	

\begin{proof}[Proof of Lemma \ref{P^K irredicible}] Let $T>0, r>0$ and $x, a \in \mathbb{R}^d$ be arbitrary but fixed. We will show that $$P^{(k)}(T,x,B(a;r)) = \mathbb{P}\{|X^{(k)}(T)-a| < r| X^{(k)}(0)=x\} > 0$$
	 or equivalently $\mathbb{P}\{|X^{(k)}(T)-a| \geq r| X^{(k)}(0)=x\} < 1$. Let us choose some $t_0 \in (0,T)$. For any $n \in \mathbb{N}$, we set $X^{(k)}_{n}(t_0) := X^{(k)}(t_0)1_{\{|X^{(k)}(t_0)|\leq n\}}$. Since $\lim_{r\rightarrow 0} F(r) = 0$ and $0 \leq F \leq 1$, the bounded convergence implies that
\begin{equation}\label{eq:E FXnk-0}
\lim\limits_{n\rightarrow \infty}\mathbb{E}[F(|X^{(k)}_{n}(t_0) - X^{(k)}(t_0)|^2)] = 0.
\end{equation}
For $t \in [t_0, T]$, define
$$J^n(t) := \frac{T-t}{T-t_0}X^{(k)}_{n}(t_0) +                   \frac{t-t_0}{T-t_0}a, \ \text{ and }\ h^n(t):= \frac{a-X^{(k)}_{n}(t_0)}{T-t_0}-b(J^n(t),k).$$
Observe  that $J^n(t_0) = X^{(k)}_{n}(t_0)$ and $J^n(T) = a$. In addition,   $J^n$  satisfies the following  stochastic differential equation
\begin{align*}
J^n(t) = X^{(k)}_{n}(t_0) + \int_{t_0}^{t}b(J^n(s),k)ds + \int_{t_0}^{t}h^n(s)ds, ~~~~ t \in [t_0,T].
\end{align*}

Consider the  stochastic differential equation
\begin{equation}
\label{Y-t0-T sde}
\begin{aligned}
Y(t)&  = X^{(k)}(t_0) + \int_{t_0}^{t}[b(Y(s),k) + h^n(s)]ds + \int_{t_0}^{t}\sigma(Y(s),k)dW(s) 
\\& \qquad + \int_{t_0}^{t}\int_{U}c(Y(s),k,u)\tilde{N}(ds,du), \qquad t \in [t_0,T].
\end{aligned}
\end{equation}
Also denote $\Delta_t := Y(t) - J^n(t)$ for $t \in [t_0,T]$. Note that $\Delta_{t_0} = X^{(k)}(t_0) - X^{(k)}_n(t_0)$ and $\Delta_{T} = Y(T)-a$. In addition,  $\Delta_{t}$ satisfies the stochastic differential equation
\begin{align*}
\Delta_t = \Delta_{t_0}& + \int_{t_0}^{t}[b(Y(s),k) - b(J^n(s),k)]ds + \int_{t_0}^{t}\sigma(Y(s),k)dW(s)   + \int_{t_0}^{t}\int_{U}c(Y(s),k,u)\tilde{N}(ds,du).
\end{align*} 
 Consequently  the generator of the process $\Delta_t$ is given by
\begin{align*}
\LL  f(x) &=  \LL_{\mathrm d} f(x) + \LL_{\mathrm j}f(x)\\
  :& = \frac{1}{2}\tr\left(\sigma(Y(s),k)\sigma(Y(s),k)^T\nabla^{2}f(x)\right) + \langle b(Y(s),k)-b(J^n(s),k),\nabla f(x)\rangle \nonumber\\
&\qquad + \int_{U}\left(f(x+c(Y(s),k,u)) - f(x) -\langle\nabla f(x), c(Y(s),k,u)\rangle\right)\nu(du), \quad f \in C^{2}_{c}(\R^{d}). 
\end{align*}

We compute
\begin{align*}
 \LL_{\mathrm d}  F(|\Delta_s|^2) &= \frac{1}{2}\tr\left(\sigma(Y(s),k)\sigma(Y(s),k)^T\nabla^{2}F(|\Delta_s|^2) \right) + \langle b(Y(s),k)-b(J^n(s),k),\nabla F(|\Delta_s|^2) \rangle \\
&= \frac{1}{2}\tr\left(\sigma(Y(s),k)\sigma(Y(s),k)^T\left[4F''(|\Delta_s|^2)\Delta_s\Delta_s^T + 2F'(|\Delta_s|^2)I\right] \right)\\
&  \quad +  \langle b(Y(s),k)-b(J^n(s),k),2F'(|\Delta_s|)\Delta_s \rangle \\
&= 2F''(|\Delta_s|^2)|\Delta_s^T\sigma(Y(s),k)|^2 + F'(|\Delta_s|)|\sigma(Y(s),k)|^2\\
& \quad +  2F'(|\Delta_s|^2)\langle b(Y(s),k)-b(J^n(s),k),\Delta_s \rangle\\
&\leq F'(|\Delta_s|)\left[|\sigma(Y(s),k)|^2 + 2\langle b(Y(s),k)-b(J^n(s),k),\Delta_s \rangle\right],
\end{align*} 
where the inequality follows from 
(\ref{F''<0}).
Likewise, the concavity of $F$ leads to
\begin{align*}
 \LL_{\mathrm j} F(|\Delta_s|^2) &= \int_{U}\left(F(|\Delta_s+c(Y(s),k,u)|^2) - F(|\Delta_s|^2) -\langle\nabla F(|\Delta_s|^2), c(Y(s),k,u)\rangle\right)\nu(du)\\
 &\le  \int_{U}  \!\big[F'(|\Delta_s|^2)[|\Delta_s+c(Y(s),k,u)|^2-|\Delta_s|^2] - 2F'(|\Delta_s|^2)\langle\Delta_s, c(Y(s),k,u)\rangle\big] \!\nu(du)\\
 & = \int_{U}  F'(|\Delta_s|^2)|c(Y(s),k,u)|^2  \nu(du). 
\end{align*}
 Therefore, by adding the above two inequalities, we have
\begin{align*}
\LL F(|\Delta_s|^2) &
\leq  { F'(|\Delta_s|^2)\bigg[  |\sigma(Y(s),k)|^2 + 2\langle b(Y(s),k)-b(J^n(s),k),\Delta_s \rangle + \int_{U}|c(Y(s),k,u)|^2\nu(du)\bigg]. }
\end{align*} 
  On the other hand, when  $|Y(s) | \le R$, $|J^{n}(s) |\le R$ and $|\Delta_{s}| \le \delta_{0}$,  we can use  \eqref{eq:str-Fe-coeff-cts} and  \eqref{< |x|^2+1}  to obtain
\begin{align*} 
 |\sigma (Y(s),k)|^2   + 2\langle b(Y(s),k)& -b(J^n(s),k),\Delta_s \rangle  + \int_{U}|c(Y(s),k,u)|^2\nu(du)      \\
  & \le  \kappa(|Y(s)|^2 + 1) + 2\kappa_{R} |\Delta_s | g(|\Delta_s|) \le K_{R} + \kappa |Y(s)|^{2},  
\end{align*}
 where $K_{R} = \kappa + 2\kappa_{R} \max_{r\in [0,\delta_{0}]}  \{ r  g(r)\} < \infty.$
Then it follows that \begin{align*} 
 \LL F(|\Delta_s|^2) 
  \le    F'(|\Delta_s|^2) [ K_{R} + \kappa |Y(s)|^{2}]  \le  K_{R} + \kappa |Y(s)|^{2}.\end{align*}
  In view of \eqref{< |x|^2+1} and \eqref{E[X^2] < K}, we can use the standard arguments to show that $\E[\sup_{t_{0}\le s \le T} |Y(s)|^{2}] \le K $,  where $K$ is a positive constant independent of $t_0$. 
For any $R>0$, we define $\tau_{R} := \inf\{t\geq t_0 : |Y(t)|\vee |J^n(t)| > R\} \wedge T$ and $S_{\delta_{0}} :=\inf\{t\geq t_0 : |Y(t) - J^n(t)| \geq \delta_{0}\} \wedge T$. Then  we can compute
\begin{align} \label{E[F] < E[F] +0}
\nonumber\mathbb{E}[F(|\Delta_{T\wedge\tau_{R}\wedge S_{\delta_{0}}}|^2)]
 &= \mathbb{E}[F(|\Delta_{t_0}|^2)] + \mathbb{E}\left[\int_{t_0}^{T\wedge\tau_{R}\wedge S_{\delta_{0}}}\LL F(|\Delta_{s^{-}}|^2)ds\right] \\
\nonumber & \le \mathbb{E}[F(|\Delta_{t_0}|^2)] + \mathbb{E}\left[\int_{t_0}^{T\wedge\tau_{R}\wedge S_{\delta_{0}}}  ( K_{R} + \kappa |Y(s^{-})|^{2}) ds\right] \\
\nonumber & \le \E[F(|\Delta_{t_0}|^2)] + K_{R} (T-t_{0}) + \E\bigg[\int_{t_{0}}^{T} \kappa |Y(s)|^{2} ds  \bigg]
 \\ & \le   \E[F(|\Delta_{t_0}|^2)] + ( K_{R} +  \kappa K) (T-t_{0}). 
 \end{align}

Next we  show that 
\begin{eqnarray}\label{E[F] < 1/FE}
\mathbb{E}[F(|\Delta_{T}|^2)] \leq \frac{1}{F(\delta_{0}^2)}\mathbb{E}[F(|\Delta_{T\wedge S_{\delta_{0}}}|^2)].
\end{eqnarray}  
To this end, we note that $|\Delta_{T\wedge S_{\delta_{0}}\wedge\tau_{R}}| \geq \delta_{0}$ on the set  $\{S_{\delta_{0}} < T \wedge\tau_{R}\}$. Since $F$ is increasing, we have $F(\delta_{0}^2) \leq F(|\Delta_{T\wedge S_{\delta_{0}}}|^2)$. This together with the fact that $0 \leq F \leq 1$ give the following 
\begin{align*}
& \frac{\mathbb{E}[F(|\Delta_{T\wedge\tau_{R}\wedge S_{\delta_{0}}}|^2)]}{F(\delta_{0}^2)} - \mathbb{E}[F(|\Delta_{T\wedge\tau_{R}}|^2)] \\
&\ \ = \frac{\mathbb{E}[F(|\Delta_{T\wedge\tau_{R}\wedge S_{\delta_{0}}}|^2)1_{\{T\wedge\tau_{R}\leq S_{\delta_{0}}\}}] + \mathbb{E}[F(|\Delta_{T\wedge\tau_{R}\wedge S_{\delta_{0}}}|^2)1_{\{T\wedge\tau_{R}> S_{\delta_{0}}\}}]}{F(\delta_{0}^2)} - \mathbb{E}[F(|\Delta_{T\wedge\tau_{R}}|^2)] \\
&\ \ \geq \frac{\mathbb{E}[F(|\Delta_{T\wedge\tau_{R}}|^2)1_{\{T\wedge\tau_{R}\leq S_{\delta_{0}}\}}] + F(\delta_{0}^2)\mathbb{P}\{T\wedge\tau_{R}> S_{\delta_{0}}\}}{F(\delta_{0}^2)} - \mathbb{E}[F(|\Delta_{T\wedge\tau_{R}}|^2)] \\
&\ \ \geq \mathbb{P}\{T\wedge\tau_{R}> S_{\delta_{0}}\} + \mathbb{E}[F(|\Delta_{T\wedge\tau_{R}}|^2)1_{\{T\wedge\tau_{R}\leq S_{\delta_{0}}\}}]  -\mathbb{E}[F(|\Delta_{T\wedge\tau_{R}}|^2)] \\
&\ \ = \mathbb{P}\{T\wedge\tau_{R}> S_{\delta_{0}}\} - \mathbb{E}[F(|\Delta_{T\wedge\tau_{R}}|^2)1_{\{T\wedge\tau_{R}> S_{\delta_{0}}\}}] \\
&\ \ \geq \mathbb{P}\{T\wedge\tau_{R}> S_{\delta_{0}}\} - \mathbb{E}[1\cdot1_{\{T\wedge\tau_{R}> S_{\delta_{0}}\}}]  = 0.
\end{align*} 
Consequently we have $\mathbb{E}[F(|\Delta_{T\wedge\tau_{R}}|^2)] \leq \frac{\E[F(|\Delta_{T\wedge\tau_{R}\wedge S_{\delta_{0}}}|^2)]}{F(\delta_{0}^2)} $.
Since $\lim_{R\rightarrow \infty}\tau_{R} = T$ a.s. and $0 \leq F \leq 1$, the bounded convergence theorem gives (\ref{E[F] < 1/FE}).

Recall that $Y$ satisfies the stochastic differential equation \eqref{Y-t0-T sde} for $t\in [t_{0}, T]$. For $t \in [0,t_0]$, we define $Y(t) := X^{(k)}(t)$ and $X^{(k)}(t)$ is the weak solution to \eqref{SDE X^k} with initial condition $x$. Then the process $Y$ satisfies the following  stochastic differential equation:
\begin{align*}
Y(t) = x&  + \int_{0}^{t}[b(Y(s),k) + h^n(s)1_{\{s>t_0\}}]ds + \int_{0}^{t}\sigma(Y(s),k)dW(s) + \int_{0}^{t}\int_{U}c(Y(s),k,u)\tilde{N}(ds,du)
\end{align*}
for $t \in [0,T]$. Next we set
\begin{align*}
H(t) := 1_{\{t>t_0\}}\sigma^{-1}(Y(t),k)h^n(t),\ \text{ and }\
M(t) := \exp\bigg\{\int_{0}^{t}\langle H(s), dW(s) \rangle -\frac{1}{2}\int_{0}^{t}|H(s)|^2ds\bigg\}.
\end{align*}
As argued in \citet*{Qiao-14}, { it follows from \eqref{eq1:elliptic}   that $|H(t)|^2$ is bounded and hence $M$ is a martingale under $\mathbb{P}$ by Novikov's criteria}. Moreover, $\mathbb{E}[M(T)] = 1$. Define
\begin{align*}
\mathbb{Q}(B) &:= \mathbb{E}[M(T)1_{\{B\}}], ~~~B \in \mathcal{F}_T\\
\tilde{W}(t) &:= W(t) + \int_{0}^{t}H(s)ds.
\end{align*}
It follows from Theorem 132 of \citet*{Situ-05} that $\mathbb{Q}$ is a probability measure, $\tilde{W}$ is a $\mathbb{Q}$-Brownian motion and $\tilde{N}(dt,du)$ is a $\mathbb{Q}$-compensated Poisson random  measure with compensator $dt\nu(du)$. Furthermore, under the measure $\mathbb{Q}$, $Y$ solves the following stochastic differential equation
\begin{align*}
	Y(t) = x + \int_{0}^{t}b(Y(s),k)ds + \int_{0}^{t}\sigma(Y(s),k)d\tilde{W}(s) + \int_{0}^{t}\int_{U}c(Y(s),k,u)\tilde{N}(ds,du)
\end{align*}
for $t \in [0,T]$. By the uniqueness in law of the solution to the SDE, we have that the law of $\{X^{(k)}(t): t \in [0,T]\}$ under $\mathbb{P}$ is the same as the law of  $\{Y(t): t \in [0,T]\}$ under $\mathbb{Q}$. In particular, we have   $\mathbb{P}\{|X^{(k)}(T)-a| \geq r| X^{(k)}(0)=x\}  =\mathbb{Q}\{|Y(T)-a| \geq r| Y(0)=x\}$.  Since  $\mathbb{P}$ and  $\mathbb{Q}$ are equivalent, 
  the desired assertion $\P\{|X^{(k)}(T) - a| \ge r | X^{(k)}(0)=x\}= \mathbb{Q}\{|Y(T)-a| \geq r| Y(0)=x\}<1 $ will follow if we can show that $\P\{|Y(T) -a| \ge r | Y(0)=x\} <1$.
 To this end, for any $\e > 0$, we first choose an $R> 0$ sufficiently large so that $\P\{\tau_{R} < T \} < \e$.  Sine $F$ is bounded and increasing,  we can use (\ref{E[F] < 1/FE}) and (\ref{E[F] < E[F] +0}) to compute
\begin{align*}
\P\{|Y(T) -a| \ge r | Y(0)=x\}& = \P \{|Y(T)-a|^2 \geq r^2| Y(0)=x\}\\
&=\P\{F(|Y(T)-a|^2) \geq F(r^2)| Y(0)=x\}\\
&\leq \frac{\mathbb{E}[F(|Y(T)-a|^2)]}{F(r^2)}\\
& = \frac{\mathbb{E}[F(|\Delta_{T}|^2)]}{F(r^2)}\\
&\leq \frac{\mathbb{E}[F(|\Delta_{T\wedge S_{\delta_{0}}}|^2)]}{F(r^2)F(\delta_{0}^2)}\\
& = \frac{\mathbb{E}[F(|\Delta_{T\wedge S_{\delta_{0}}\wedge \tau_{R}}|^2)1_{\{\tau_{R} \ge T\wedge S_{\delta_{0}}\}}] + \E[F(|\Delta_{T\wedge S_{\delta_{0}}}|^2)1_{\{\tau_{R} < T\wedge S_{\delta_{0}}\}}]}{F(r^2)F(\delta_{0}^2)}\\ 
& \le \frac{\E[F(|\Delta_{t_0}|^2)] + ( K_{R} + \kappa K) (T-t_{0}) + \P\{\tau_{R} < T \}}{F(r^2)F(\delta_{0}^2)}\\
& \le \frac{\E[F(|\Delta_{t_0}|^2)] + ( K_{R} + \kappa K) (T-t_{0}) + \e}{F(r^2)F(\delta_{0}^2)}.
\end{align*}
 Thanks to \eqref{eq:E FXnk-0},  we have $\E[F(|\Delta_{t_0}|^2)] \to 0$ as $n\to \infty$.
Therefore  we can choose $n$ sufficiently large and $t_0$ close enough to $T$ to make the last term less than $1$ as desired.\end{proof}


\begin{thebibliography}{}\parskip=-1pt

\bibitem[\protect\citeauthoryear{Cerrai}{Cerrai}{2001}]{Cerrai-01}
Cerrai, S. (2001).
\newblock {\em Second order {PDE}'s in finite and infinite dimension: A
  probabilistic approach}, volume 1762 of {\em Lecture Notes in Mathematics}.
\newblock Springer-Verlag, Berlin.

\bibitem[\protect\citeauthoryear{Chen \& Li}{Chen \& Li}{1989}]{ChenLi-89}
Chen, M.~F. \& Li, S.~F. (1989).
\newblock Coupling methods for multidimensional diffusion processes.
\newblock {\em Ann. Probab.}, {\em 17\/}(1), 151--177.

\bibitem[\protect\citeauthoryear{Dynkin}{Dynkin}{1965}]{Dynkin-I}
Dynkin, E. (1965).
\newblock {\em Markov Processes}, volume 121 of {\em Die Grundlehren der
  Mathematischen Wissenschaften}.
\newblock Berlin: Springer-Verlag.

\bibitem[\protect\citeauthoryear{Foguel}{Foguel}{1969}]{Foguel-69}
Foguel, S.~R. (1969).
\newblock Positive operators on {$C(X)$}.
\newblock {\em Proc. Amer. Math. Soc.}, {\em 22}, 295--297.

\bibitem[\protect\citeauthoryear{Hairer}{Hairer}{2016}]{Hairer-16}
Hairer, M. (2016).
\newblock Convergence of markov processes.
\newblock http://www.hairer.org/notes/Convergence.pdf.
\newblock Lecture notes, Univ. Warwick.

\bibitem[\protect\citeauthoryear{Mao \& Yuan}{Mao \& Yuan}{2006}]{MaoY}
Mao, X. \& Yuan, C. (2006).
\newblock {\em Stochastic differential equations with {M}arkovian switching}.
\newblock London: Imperial College Press.

\bibitem[\protect\citeauthoryear{Meyn \& Tweedie}{Meyn \&
  Tweedie}{1992}]{MeynT-92}
Meyn, S.~P. \& Tweedie, R.~L. (1992).
\newblock Stability of {M}arkovian processes. {I}. {C}riteria for discrete-time
  chains.
\newblock {\em Adv. in Appl. Probab.}, {\em 24\/}(3), 542--574.

\bibitem[\protect\citeauthoryear{Meyn \& Tweedie}{Meyn \&
  Tweedie}{1993a}]{MeynT-93II}
Meyn, S.~P. \& Tweedie, R.~L. (1993a).
\newblock Stability of {M}arkovian processes. {II}. {C}ontinuous-time processes
  and sampled chains.
\newblock {\em Adv. in Appl. Probab.}, {\em 25\/}(3), 487--517.

\bibitem[\protect\citeauthoryear{Meyn \& Tweedie}{Meyn \&
  Tweedie}{1993b}]{MeynT-93III}
Meyn, S.~P. \& Tweedie, R.~L. (1993b).
\newblock Stability of {M}arkovian processes. {III}. {F}oster-{L}yapunov
  criteria for continuous-time processes.
\newblock {\em Adv. in Appl. Probab.}, {\em 25\/}(3), 518--548.

\bibitem[\protect\citeauthoryear{Nguyen, Yin \& Zhu}{Nguyen
  et~al.}{2017}]{NguyenYZ-17}
Nguyen, D.~H., Yin, G., \& Zhu, C. (2017).
\newblock Certain properties related to well posedness of switching diffusions.
\newblock {\em Stochastic Process. Appl.}, {\em 127\/}(3135--3158).

\bibitem[\protect\citeauthoryear{Peszat \& Zabczyk}{Peszat \&
  Zabczyk}{1995}]{P-Zabczyk-1995}
Peszat, S. \& Zabczyk, J. (1995).
\newblock Strong {F}eller property and irreducibility for diffusions on
  {H}ilbert spaces.
\newblock {\em Ann. Probab.}, {\em 23\/}(1), 157--172.

\bibitem[\protect\citeauthoryear{Priola \& Wang}{Priola \&
  Wang}{2006}]{PriolaW-06}
Priola, E. \& Wang, F.-Y. (2006).
\newblock Gradient estimates for diffusion semigroups with singular
  coefficients.
\newblock {\em J. Funct. Anal.}, {\em 236\/}(1), 244--264.

\bibitem[\protect\citeauthoryear{Qiao}{Qiao}{2014}]{Qiao-14}
Qiao, H. (2014).
\newblock Exponential ergodicity for {SDE}s with jumps and non-{L}ipschitz
  coefficients.
\newblock {\em J. Theoret. Probab.}, {\em 27\/}(1), 137--152.

\bibitem[\protect\citeauthoryear{Shao}{Shao}{2015a}]{Shao15-ergodicity}
Shao, J. (2015a).
\newblock Ergodicity of regime-switching diffusions in {W}asserstein distances.
\newblock {\em Stochastic Process. Appl.}, {\em 125\/}(2), 739--758.

\bibitem[\protect\citeauthoryear{Shao}{Shao}{2015b}]{Shao-15}
Shao, J. (2015b).
\newblock Strong solutions and strong {F}eller properties for regime-switching
  diffusion processes in an infinite state space.
\newblock {\em SIAM J. Control Optim.}, {\em 53\/}(4), 2462--2479.

\bibitem[\protect\citeauthoryear{Shao \& Xi}{Shao \& Xi}{2014}]{ShaoX-14}
Shao, J. \& Xi, F. (2014).
\newblock Stability and recurrence of regime-switching diffusion processes.
\newblock {\em SIAM J. Control Optim.}, {\em 52\/}(6), 3496--3516.

\bibitem[\protect\citeauthoryear{Situ}{Situ}{2005}]{Situ-05}
Situ, R. (2005).
\newblock {\em Theory of stochastic differential equations with jumps and
  applications}.
\newblock Mathematical and Analytical Techniques with Applications to
  Engineering. Springer, New York.

\bibitem[\protect\citeauthoryear{Stettner}{Stettner}{1986}]{Stettner-86}
Stettner, L. (1986).
\newblock On the existence and uniqueness of invariant measure for continuous
  time markov processes.
\newblock Technical Report LCDS \#86-18, Brown University, Providence, RI.

\bibitem[\protect\citeauthoryear{Tuong, Nguyen, Dieu \& Tran}{Tuong
  et~al.}{2019}]{TNDT-19}
Tuong, T.~D., Nguyen, D.~H., Dieu, N.~T., \& Tran, K. (2019).
\newblock Extinction and permanence in a stochastic {SIRS} model in
  regime-switching with general incidence rate.
\newblock {\em Nonlinear Anal. Hybrid Syst.}, {\em 34}, 121--130.

\bibitem[\protect\citeauthoryear{Xi}{Xi}{2004}]{Xi-04}
Xi, F. (2004).
\newblock Stability of a random diffusion with nonlinear drift.
\newblock {\em Statist. Probab. Lett.}, {\em 68\/}(3), 273--286.

\bibitem[\protect\citeauthoryear{Xi, Yin \& Zhu}{Xi et~al.}{2019}]{XiYZ-19}
Xi, F., Yin, G., \& Zhu, C. (2019).
\newblock Regime-switching jump diffusions with non-{L}ipschitz coefficients
  and countably many switching states: existence and uniqueness, {F}eller, and
  strong {F}eller properties.
\newblock In {\em Modeling, stochastic control, optimization, and
  applications}, volume 164 of {\em IMA Vol. Math. Appl.}  (pp.\ 571--599).
  Springer, Cham.

\bibitem[\protect\citeauthoryear{Xi \& Zhu}{Xi \& Zhu}{2017}]{XiZ-17}
Xi, F. \& Zhu, C. (2017).
\newblock On {F}eller and strong {F}eller properties and exponential ergodicity
  of regime-switching jump diffusion processes with countable regimes.
\newblock {\em SIAM J. Control Optim.}, {\em 55\/}(3), 1789--1818.

\bibitem[\protect\citeauthoryear{Xi \& Zhu}{Xi \& Zhu}{2019}]{XiZ-19}
Xi, F. \& Zhu, C. (2019).
\newblock Jump type stochastic differential equations with non-{L}ipschitz
  coefficients: non-confluence, {F}eller and strong {F}eller properties, and
  exponential ergodicity.
\newblock {\em J. Differential Equations}, {\em 266\/}(8), 4668--4711.

\bibitem[\protect\citeauthoryear{Yin \& Zhu}{Yin \& Zhu}{2010}]{YZ-10}
Yin, G.~G. \& Zhu, C. (2010).
\newblock {\em Hybrid Switching Diffusions: Properties and Applications},
  volume~63 of {\em Stochastic Modelling and Applied Probability}.
\newblock New York: Springer.

\bibitem[\protect\citeauthoryear{Zhang}{Zhang}{2001}]{Zhang}
Zhang, Q. (2001).
\newblock Stock trading: an optimal selling rule.
\newblock {\em SIAM J. Control Optim.}, {\em 40\/}(1), 64--87.

\end{thebibliography}

\end{document}